  \theoremstyle{definition}
  \newtheorem{defn}{\protect\definitionname}
  \theoremstyle{remark}
  \newtheorem{rem}{\protect\remarkname}
 \theoremstyle{definition}
  \newtheorem{example}{\protect\examplename}
\theoremstyle{plain}
\newtheorem{thm}{\protect\theoremname}
  \theoremstyle{plain}
  \newtheorem{lem}{\protect\lemmaname}
  \theoremstyle{plain}
  \newtheorem{prop}{\protect\propositionname}
  \theoremstyle{plain}
  \newtheorem{cor}{\protect\corollaryname}
\newcommand{\xyR}[1]{
  \xydef@\xymatrixrowsep@{#1}}\newcommand{\xyC}[1]{
  \xydef@\xymatrixcolsep@{#1}}
\let\myTOC\tableofcontents\renewcommand{\tableofcontents}{%
  \pdfbookmark[1]{\contentsname}{}
  \myTOC }
\def\LyX{\texorpdfstring{%
  L\kern-.1667em\lower.25em\hbox{Y}\kern-.125emX\@}
  {LyX}}
\date{}
  \providecommand{\definitionname}{Definition}
  \providecommand{\examplename}{Example}
  \providecommand{\lemmaname}{Lemma}
  \providecommand{\propositionname}{Proposition}
  \providecommand{\remarkname}{Remark}
\providecommand{\corollaryname}{Corollary}
\providecommand{\theoremname}{Theorem}
\begin{document}

\title{ON $J$-HOLOMORPHIC CURVES IN ALMOST COMPLEX MANIFOLDS WITH ASYMPTOTICALLY
CYLINDRICAL ENDS}

\author{ERKAO BAO}
\maketitle
\begin{abstract}
Symplectic Field Theory studies $J$-holomorphic curves in almost
complex manifolds with cylindrical ends. One natural generalization
is to replace ``cylindrical'' by ``asymptotically cylindrical''.
In this article, we generalize the asymptotic results about the behavior
of $J$-holomorphic curves near infinity in \cite{Hofer Weinstein conjecture,Finite energy plane,morse bott,compactness}
to the asymptotically cylindrical setting. We also sketch how these
asymptotic results allow the main compactness theorems of \cite{compactness}
to be extended to the asymptotically cylindrical case.
\end{abstract}

\author{%
\thanks{\noindent \protect\href{mailto: }{ }%
}}

\tableofcontents{}

\section{Introduction}

Introduced by Gromov in 1985, $J$-holomorphic curves have been studied
intensively in closed symplectic manifolds. In 1993 Hofer studied
the behaviors of $J$-holomorphic curves in symplectizations of contact
manifolds, which are noncompact. Shortly after that, Eliashberg, Givental
and Hofer invented the Symplectic Field Theory, which greatly helps
us understand symplectic manifolds and contact manifolds. In most
of previous literature, the almost complex structure $J$ allowed
is cylindrical near the ends of the noncompact symplectic manifolds.
Here cylindrical means that $J$ is independent of the radial direction.
In \cite{compactness} the notion of asymptotically cylindrical almost
complex structure was introduced, which is a natural generalization
of cylindrical almost complex structure. However, there is no corresponding
result proven for asymptotically cylindrical almost complex structure.
Intuitively, we expect similar results as in the cylindrical case.
However, the original proofs rely heavily on the cylindrical nature
of the almost complex structure, which prevents us from a direct generalization
to the asymptotically cylindrical case. In this paper, we give a modified
definition of asymptotically cylindrical almost complex structure
which includes an exponential decay condition that is satisfied in
all interesting examples, and prove some parallel analytical results
as in the cylindrical case. Based on these results we can compactify
the moduli space of $J$-holomorphic curves in almost complex manifolds
with asymptotically cylindrical ends by adding holomorphic buildings
introduced by \cite{compactness}.

This generalization is needed for application purposes. In many cases
the natural almost complex structure is only asymptotically cylindrical
(see Example \ref{exm: R(2n+2)} and Example \ref{exm:sub kahler}).
For instance, we can use the generalized results to prove the Gromov's
Monotonicity theorem with multiplicity (See \cite{Bao}). We also
take this chance to fill in some gaps in the literature.

In the asymptotically cylindrical case, the proofs of some theorems
are significantly different and more sophisticated than the proofs
in the cylindrical case (See the proofs of Proposition \ref{pro:gradient bound for finite energy curve},
Theorem \ref{thm:converge to reeb orbit} and Theorem \ref{thm:subsequence convergence to Reeb},
for example). The extra difficulties mainly come from the following
two facts: 1. the translations in the cylindrical almost complex manifold
are not $J$-holomorphic anymore; 2. the unmodified Hofer energy is
not positive when restricted to $J$-complex planes, and the modified
Hofer energy is not closed. Crucial uses of Gromov's Monotonicity
theorem are the main ingredients to overcome these extra difficulties.

In Section 2, we give the definition of asymptotically cylindrical
almost complex manifolds and the definition of Hofer energy of $J$-holomorphic
curves in this context. 

In Section 3, we give the proofs of the main results listed in Section
2. The proofs follow the schemes of \cite{Hofer Weinstein conjecture,Finite energy plane,Finite energy cylinders of small area,morse bott,compactness}. 

In Section 4, we give the definition of almost complex manifolds with
asymptotically cylindrical ends and the definition of Hofer energy
in this context. Finally we state and outline the proof of the compactness
result in this context.

\section{\label{sec:Asymptotically-cylindrical-almost}Asymptotically cylindrical
almost complex structures}

\subsection{\label{sub:Def of asymp cylind complex}Definition}

Let $V$ be a smooth closed oriented manifold of dimension $2n+1,$
and $J$ be a smooth almost complex structure in $W:=\mathbb{R}^{+}\times V.$
Assume that the orientation of $W$ determined by $J$ is the same
as the orientation coming from the standard orientation of $\mathbb{R}^{+}$
and the orientation of $V.$ Let $\mathbf{R}:=J\left(\frac{\partial}{\partial r}\right)$
be a smooth vector field on $W,$ and $\xi$ be a subbundle of the
tangent bundle $TW$ defined by $\xi_{(r,v)}=\left(0\times T_{v}V\right)\cap J\left(0\times T_{v}V\right)\subset T_{(r,v)}W$,
for $(r,v)\in W$. The tangent bundle $TW$ splits as $TW=\mathbb{R}(\frac{\partial}{\partial r})\oplus\mathbb{R}(\mathbf{R})\oplus\xi$. 

Define a 1-form $\lambda$ on $W$ by: $\lambda(\xi)=0$, $\lambda(\frac{\partial}{\partial r})=0$,
$\lambda\left(\mathbf{R}\right)=1$, and a 1-form $\sigma$ on $W$
by: $\sigma(\xi)=0$, $\sigma(\frac{\partial}{\partial r})=1$, $\sigma\left(\mathbf{R}\right)=0$.

We call a tensor on $W$ translationally invariant if it is independent
of the $r$-coordinate. Let $f_{s}:W\to W$ be the translation along
the $\mathbb{R}^{+}$-direction defined by $f_{s}(r,v):=(r+s,v).$
\begin{defn}
\label{def: asympt cylindrical}Under the above notations, $J$ is
called asymptotically cylindrical at positive infinity, if for all
$l\in\mathbb{Z}_{\geqq0},$ $J$ satisfies (AC1)-(AC5):\end{defn}
\begin{itemize}
\item (AC1) There exists a smooth translationally invariant almost complex
structure $J_{\infty}$ on $W$ and constants $K_{l}^{+},\delta_{l}>0$,
such that
\begin{equation}
\left\Vert \left.\nabla^{l}\left(J-J_{\infty}\right)\right|_{[r,+\infty)\times V}\right\Vert _{C^{0}}\leqq K_{l}^{+}e^{-\delta_{l}r}\label{eq:key}
\end{equation}
 for all $r\geq0$, where $\left\Vert \cdot\right\Vert _{C^{0}}$
is computed using a translationally invariant metric $g_{W}$ on $W$,
for example $g_{W}=dr^{2}+g_{V},$ and $\nabla$ is the corresponding
Levi-Civita connection. We further require that $K_{l}^{+}$ is sufficiently
small such that the $\omega$ defined in \eqref{eq:omega} satisfies
Requirements \ref{enu:-is-a} and \ref{enu:There-exist-constants}
in Section \ref{sub:Energy-of--holomorphic}. (See Remark \ref{rmk:1}.)
\item (AC2) $i(\mathbf{R}_{\infty})d\lambda_{\infty}=0,$ where $\mathbf{R}_{\infty}:=\underset{s\to\infty}{\lim}f_{s}^{*}\mathbf{R}$,
$\lambda_{\infty}:=\underset{s\to\infty}{\lim}f_{s}^{*}\lambda$,
and both limits exist by (AC1).
\item (AC3) $\mathbf{R}_{\infty}(r,v)=J_{\infty}\left(\frac{\partial}{\partial r}\right)\in0\times T_{v}V$. 
\end{itemize}
There exists a closed 2-form $\omega_{\infty}$ on $V$ such that 
\begin{itemize}
\item (AC4) $i(\mathbf{R}_{\infty})\omega_{\infty}=0.$
\item (AC5) $\omega_{\infty}(\cdot,J_{\infty}\cdot)$ is a metric on $\xi_{\infty},$
where $\xi_{\infty}=\underset{s\to\infty}{\lim}f_{s}^{*}\xi.$ \end{itemize}
\begin{rem}
\label{rmk:1}The definition we use is slightly different from the
one in \cite{compactness}. We require that $J$ converges to $J_{\infty}$
exponentially fast in condition (AC1). This is the accurate condition
to guarantee that the $J$-holomorphic curve converges to the periodic
orbits of $\mathbf{R}_{\infty}$ exponentially fast by the footnote
of formula (\ref{eq:zs+Mzt+Szout}). If we are only interested in
the behavior of a $J$-holomorphic curve near infinity, then the requirement
that $K_{l}^{+}$ is small can be achieved by restricting $W$ to
$r\geq r_{0}$ for some large $r_{0}.$ 
\end{rem}

We can restate the above conditions using the notion of hamiltonian
structure as in \cite{application of sft}. That the $2$-form $\omega_{\infty}$
has rank $2n$ says that $(V,\omega_{\infty})$ is a hamiltonian structure.
The conditions (AC3), $i(\mathbf{R}_{\infty})\omega_{\infty}=0=i(\mathbf{R}_{\infty})d\lambda_{\infty}$
and $\lambda_{\infty}(\mathbf{R}_{\infty})=1$ say that $(V,\omega_{\infty})$
is a stable hamiltonian structure. The condition $\xi_{\infty}=\ker\lambda_{\infty},$
that $J_{\infty}$ is an almost complex structure on $\xi_{\infty}$,
and that $J_{\infty}$ is compatible with $\omega_{\infty}$ ( by
(AC5) ) imply that $(\lambda_{\infty},J_{\infty})$ is a framing of
$(V,\omega_{\infty}).$ If in addition $\omega_{\infty}=d\lambda_{\infty},$
then we say $(V,\omega_{\infty})$ is of contact type. 

We call $(\lambda,J)$ defined as above an asymptotically cylindrical
framing of the stable hamiltonian structure $(V,\omega_{\infty})$.

Similarly, we can define the notion of $J$ being asymptotically cylindrical
on $\mathbb{R}^{-}\times V$ at $-\infty$. When we say $J$ is asymptotically
cylindrical, we choose $\omega_{\pm\infty}$ without mentioning.

The following definition is the case considered in \cite{Hofer Weinstein conjecture,Finite energy plane,Finite energy cylinders of small area,morse bott,compactness}.
\begin{defn}
\label{def:cylindrical almost complex}An almost complex structure
$J$ on $\mathbb{R}^{\pm}\times V$ is said to be a cylindrical almost
complex structure at $\pm\infty$, if $J$ is an asymptotically cylindrical
almost complex structure at $\pm\infty$ and $J$ is translationally
invariant near $\pm\infty$. 

An almost complex structure $J$ on $\mathbb{R}\times V$ is said
to be a cylindrical almost complex structure, if $J$ is asymptotically
cylindrical at both $\infty$ and $-\infty,$ and $J$ is translation
invariant.\end{defn}
\begin{example}
(Symplectization) Assume $(V,\xi)$ is a contact manifold with contact
$1$-form $\lambda$ and Reeb vector field $\mathbf{R}$, i.e. $\xi=\ker\lambda,$
$\lambda\wedge(d\lambda)^{n}\neq0$, $i_{\mathbf{R}}d\lambda=0,$
and $\lambda(\mathbf{R})=1.$ Let $\omega_{\infty}=d\lambda$ and
let $J_{\xi}$ be an almost complex structure in $\xi$ such that
it is compatible with $\omega_{\infty}|_{\xi}$, i.e. $d\lambda(\cdot,J_{\xi}\cdot)$
is a metric on $\xi.$ We extend the $J_{\xi}$ to $\mathbb{R}\times V$
by setting $J(\frac{\partial}{\partial r})=\mathbf{R}$. Then $J$
is a cylindrical almost complex structure, and in particular an asymptotically
cylindrical almost complex structure at $\pm\infty$.
\end{example}
Refer to \cite{compactness} for other interesting examples of cylindrical
almost complex structures.

\begin{example}
\label{exm: R(2n+2)}Assume $J$ is a smooth almost complex structure
on $\mathbb{R}^{2n+2}$ with $J(0)=J_{0}(0)$, where $J_{0}$ is the
standard complex structure on $\mathbb{R}^{2n+2}$. Consider $\mathbb{R}^{2n+2}\backslash\{0\}$
and pick a polar coordinate chart

\[
\varphi:\mathbb{R}^{-}\times S^{2n+1}\to\mathbb{R}^{2n+2}\backslash\{0\},
\]
\[
(r,\Theta)\mapsto e^{r}\Theta,
\]
where we view $S^{2n+1}$ as the unit sphere inside $\mathbb{R}^{2n+2}$.
Let $\lambda_{-\infty}$ be the standard contact form on $S^{2n+1}.$
Define the 2-form $\omega_{-\infty}$ on $\mathbb{R}^{-}\times S^{2n+1}$
by $\omega_{-\infty}=d\lambda_{-\infty}.$ Now it is clear that $J|_{\mathbb{R}^{-}\times S^{2n+1}}$
is an asymptotically cylindrical almost complex structure near $-\infty.$
\end{example}
By (AC1) and (AC3) we can see that $\mathbf{R}_{\infty}$ is a translationally
invariant vector field on $W$ and it is tangent to each level set
$\{r\}\times V$, so we can view $\mathbf{R}_{\infty}$ as a vector
field on $V$. Let $\phi^{t}$ be the flow of $\mathbf{R}_{\infty}$
on $V$, i.e. $\phi^{t}:V\to V$ satisfies $\frac{d}{dt}\phi^{t}=\mathbf{R}_{\infty}\circ\phi^{t}$.
Then we have 
\[
\frac{d}{dt}[(\phi^{t})^{*}\lambda_{\infty}]=(\phi^{t})^{*}(i(\mathbf{R}_{\infty})d\lambda_{\infty}+di(\mathbf{R}_{\infty})\lambda_{\infty})=0.
\]
Hence $\phi^{t}$ preserves $\lambda_{\infty}$ and thus also $\xi_{\infty}$.\textcolor{black}{{}
Similarly }$\phi^{t}$\textcolor{black}{{} preserves} $\omega_{\infty}$\textcolor{black}{.} 

Let's denote by $\mathcal{P}$ the set of periodic trajectories, counting
their multiples, of the vector field $\mathbf{R}_{\infty}$ restricting
to $V.$ Notice that any smooth family of periodic trajectories from
$\mathcal{P}$ has the same period by Stokes' Theorem. 
\begin{defn}
A $T$-periodic orbit $\gamma$ of $\mathbf{R}_{\infty}$ is called
non-degenerate, if $d\phi^{T}|_{\xi_{\infty}(\gamma(0))}$ does not
have $1$ as an eigenvalue, where $\phi^{t}$ is the flow of $\mathbf{R}_{\infty}.$
We say that $J$ is non-degenerate if all the periodic solutions of
$\mathbf{R}_{\infty}$ are non-degenerate. 
\end{defn}

A weaker requirement for $J$ than non-degenerate is Morse-Bott.
\begin{defn}
\label{def:Morse Bott}We say that $J$ is of the Morse-Bott type
if, for every $T>0$ the subset $N_{T}\subset V$ formed by the closed
trajectories from $\mathcal{P}$ of period $T$ is a smooth closed
submanifold of $V$, such that the rank of $\omega_{\infty}|_{N_{T}}$
is locally constant and $T_{p}N_{T}=\ker\left(d\phi^{T}-Id\right)_{p}$.
\end{defn}

\textbf{We always assume $J$ is of Morse-Bott type in this paper. }

\subsection{Energy of $J$-holomorphic curves\label{sub:Energy-of--holomorphic}}

Let $J$ be an asymptotically cylindrical almost complex structure
on $W:=\mathbb{R}^{+}\times V$. Let's denote the projections from
$TW=\mathbb{R}(\frac{\partial}{\partial r})\oplus\mathbb{R}(\mathbf{R})\oplus\xi$
to each subbundle by $\pi_{r}$,$\pi_{\mathbf{R}}$ and $\pi_{\xi}$.
It is convenient to introduce a $2$-form $\omega$ on $W$ by 
\begin{equation}
\omega(x,y)=\frac{1}{2}\left[\omega_{\infty}(\pi_{\xi}x,\pi_{\xi}y)+\omega_{\infty}(J\pi_{\xi}x,J\pi_{\xi}y)\right].\label{eq:omega}
\end{equation}
It is easy to check that $i\left(\frac{\partial}{\partial r}\right)\omega=0=i\left(\mathbf{R}\right)\omega.$
We assume that $K_{l}^{+}$ in (AC1) is sufficiently small for all
$l\in\mathbb{Z}_{\geqq0},$ such that $\omega$ satisfies:
\begin{enumerate}
\item \label{enu:-is-a}$\omega|_{\xi}(\cdot,J\cdot)$ is a metric on $\xi;$
and
\item \label{enu:There-exist-constants} There exist constants $\epsilon_{l},\delta_{l}>0$,
such that 
\[
\left\Vert \left.\left(\omega-\omega_{\infty}\right)\right|_{[r,+\infty)\times V}\right\Vert _{C^{l}}\leqq\epsilon_{l}e^{-\delta_{l}r}
\]
 for all $r\geqq0$.
\end{enumerate}
Let $(\Sigma,j)$ be a punctured Riemann surface (with or without
boundary) and $\tilde{u}=(a,u):(\Sigma,j)\to(W,J)$ be a $J$-holomorphic
curve, i.e. $T\tilde{u}\circ j=J(\tilde{u})\circ T\tilde{u}$. The
following definition is a modification of Hofer energy in cylindrical
almost complex structure case. The $\omega$-energy and $\lambda$-energy
are defined as follows respectively 
\[
E_{\omega}(\tilde{u})=\intop_{\Sigma}\tilde{u}^{*}\omega,
\]
\[
E_{\lambda}(\tilde{u})=\underset{\phi\in\mathcal{C}}{sup}\intop_{\Sigma}\tilde{u}^{*}(\phi(r)\sigma\wedge\lambda),
\]
where $\mathcal{C}=\{\phi\in C_{c}^{\infty}(\mathbb{R},[0,1])|\intop_{-\infty}^{+\infty}\phi(x)dx=1\}$%
\footnote{In \cite{compactness}, the set $\mathcal{C}$ is given by $\mathcal{C}=\{\phi\in C_{c}^{\infty}(\mathbb{R},\mathbb{R}^{+})|\intop_{-\infty}^{+\infty}\phi(x)dx=1\}$.
It is easier to get uniform energy bounds using the modified definition
in the case when the almost complex structure is only asymptotically
cylindrical.%
}, and $\lambda,\sigma$ are defined as in the beginning of subsection
\ref{sub:Def of asymp cylind complex}. Let's define the energy of
$\tilde{u}$ by 
\[
E(\tilde{u})=E_{\omega}(\tilde{u})+E_{\lambda}(\tilde{u}).
\]

Equip $\mathbb{R}^{+}\times S^{1}$ with the standard complex structure
and coordinate $(s,t)$, and consider a $J$-holomorphic map $\tilde{u}=(a,u):\mathbb{R}^{+}\times S^{1}\to W$.
Here we view $S^{1}$ as $\mathbb{R}/\mathbb{Z}$. Notice

\begin{align}
\tilde{u}^{*}\omega & =\omega(\pi_{\xi}\tilde{u}_{s},J(\tilde{u})\pi_{\xi}\tilde{u}_{s})ds\wedge dt,\label{eq:w-energy}
\end{align}
\begin{align}
\tilde{u}^{*}(\phi(r)\sigma\wedge\lambda) & =\phi(a)\left[\sigma(\tilde{u}_{s})^{2}+\lambda(\tilde{u}_{s})^{2}\right]ds\wedge dt.\label{eq:r-energy}
\end{align}
Thus, we have $E_{\omega}(\tilde{u})\geqq0$ and $E_{\lambda}(\tilde{u})\geqq0$.

\subsection{Main Results}

The following two theorems tell us the behaviors of $J$-holomorphic
curves near infinity.
\begin{thm}
\label{thm:converge to reeb orbit}Suppose that $J$ is an asymptotically
cylindrical almost complex structure on $\mathbb{R}^{\pm}\times V$
at $\pm\infty$. Suppose that $J$ is of the Morse-Bott type. Let
$\tilde{u}=(a,u):\mathbb{R}^{\pm}\times\mathbb{R}/\mathbb{Z}\to\mathbb{R}^{\pm}\times V$
be a finite energy $J$-holomorphic curve. Suppose that the image
of $\tilde{u}$ is unbounded in $\mathbb{R}^{\pm}\times V$. Then
there exists a periodic orbit $\gamma$ of $\mathbf{R}_{\infty}$
of period $|T|$ with $T\neq0$, such that

\[
\underset{s\to\pm\infty}{\lim}u(s,t)=\gamma(Tt)
\]
\[
\underset{s\to\pm\infty}{\lim}\frac{a(s,t)}{s}=T
\]
 in $C^{\infty}(S^{1})$. 
\end{thm}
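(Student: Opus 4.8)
The plan is to follow the classical Hofer--Wysocki--Zehnder scheme for finite-energy cylinders, but with the extra care forced by the two difficulties the author flags: that vertical translations are no longer $J$-holomorphic, and that the $\lambda$-energy need not be both positive and exact. First I would establish a \emph{gradient bound}: since $E(\tilde u)<\infty$, the function $s\mapsto\int_{[s,\infty)\times S^1}\tilde u^*\omega+\sup_{\phi}\int\tilde u^*(\phi(a)\sigma\wedge\lambda)$ tends to $0$, so the energy on a unit cylinder $[s-1,s+1]\times S^1$ goes to $0$ as $s\to\infty$. Combined with Proposition \ref{pro:gradient bound for finite energy curve} (which bounds $|\nabla\tilde u|$ on cylinders of small energy, using the asymptotically cylindrical bounds (AC3)--(AC4) to absorb the error terms coming from $J-J_\infty$), this gives $\sup_{t}|\nabla\tilde u(s,t)|\to 0$; in particular $|a_s(s,t)|,|a_t(s,t)|,|\pi_\xi\tilde u_s|$, etc., all tend to $0$ uniformly in $t$. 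Then a standard bubbling-off / no-bubbling argument shows the image cannot stay in a bounded region (this is ruled out by hypothesis anyway), so $a(s,\cdot)\to+\infty$; and the unbounded image forces $T\ne0$ below.

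Next I would extract the asymptotic period and orbit. Define $T(s)=\int_{\{s\}\times S^1}u^*\lambda_\infty$ (or rather $\int_{\{s\}\times S^1}\tilde u^*\lambda$, since $\lambda-\lambda_\infty\to0$), and show, using Stokes and $i_{\mathbf R_\infty}d\lambda_\infty=0$ (AC6) together with the exponential decay in (AC3)--(AC4), that $T(s)$ converges to a limit $T$; the energy identities (\ref{eq:w-energy})--(\ref{eq:r-energy}) plus the finiteness of $E$ show $T\ne0$ precisely when the image is unbounded. Then consider the sequence of reparametrized maps $\tilde u_n(s,t)=(a(s+n,t)-a(s+n,0),\,u(s+n,t))$ on compact cylinders. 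The uniform gradient bound gives $C^0_{loc}$ bounds, and then elliptic bootstrapping for the (now $s$-dependent, but $J_\infty$-approaching) Cauchy--Riemann equation gives $C^\infty_{loc}$ convergence along a subsequence to a limit $\tilde v=(b,v)$ which is $J_\infty$-holomorphic, has $b_s\equiv\text{const}=T$, $b_t\equiv0$, and $\pi_\xi v_s\equiv0$ — i.e.\ $v(s,t)$ is independent of $s$ and $s\mapsto v(s,\cdot)$ traces a periodic orbit of $\mathbf R_\infty$ of period $|T|$. This identifies a candidate $\gamma$ with $v(s,t)=\gamma(Tt)$.

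To upgrade from \emph{subsequential} convergence of reparametrizations to the genuine limit $\lim_{s\to\infty}u(s,t)=\gamma(Tt)$ and $\lim a(s,t)/s=T$ in $C^\infty(S^1)$, I would first use the Morse--Bott hypothesis (Definition \ref{def:Morse Bott}) to show the limit orbit is isolated among orbits of its period up to the Morse--Bott manifold $N_{|T|}$, pin down a tubular neighborhood, and then run the standard ``approach in coordinates'' argument: write $\tilde u$ near infinity in coordinates $(a,\vartheta,z)$ adapted to $N_{|T|}$ (here $\vartheta\in S^1$ along the orbit, $z\in\xi$ transverse), derive the asymptotic PDE of the form $z_s+M z_t+Sz\approx 0$ with $M,S$ controlled and the error exponentially small by (AC3)--(AC4), and apply the exponential-decay estimate (the content hinted at by the footnote to (\ref{eq:zs+Mzt+Szout})) to conclude $z(s,\cdot)\to0$, $a(s,t)-Ts\to\text{const}$, and $\vartheta(s,t)-Tt\to\text{const}$, all exponentially and in $C^\infty(S^1)$; absorbing the constants by a fixed reparametrization/relabeling of $\gamma$ gives the stated limits, and in particular $a(s,t)/s\to T$.

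The main obstacle is the gradient bound in the asymptotically cylindrical setting (Proposition \ref{pro:gradient bound for finite energy curve}) and the closely related step of showing $T(s)$ converges: in the cylindrical case one exploits translation invariance to get a clean monotonicity/bubbling dichotomy, but here every estimate picks up inhomogeneous terms from $J-J_\infty$ and $\omega-\omega_\infty$, and the fact that $E_\lambda$ is defined via a supremum over $\phi\in\mathcal C$ (rather than by a single exact form) means the naive ``energy of a subcylinder'' need not be monotone. Handling these error terms — showing they are summable/decaying fast enough (this is exactly where the exponential rates $\delta_l$ in (AC3)--(AC4) are used) so that the bubbling-off analysis still closes and $T(s)$ has a limit — is where the real work lies; the remaining elliptic bootstrapping and the Morse--Bott exponential-convergence argument are, given the earlier propositions, comparatively routine adaptations of the established literature.
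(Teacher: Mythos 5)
Your overall architecture matches the paper's (uniform gradient bound, then subsequential convergence of translates to a trivial cylinder over a periodic orbit, then Morse--Bott coordinates and exponential decay to upgrade to genuine convergence), but your first step contains a genuine error that contradicts the very theorem you are proving. You claim that the decay of the energy on unit cylinders $[s-1,s+1]\times S^{1}$ yields $\sup_{t}\left|\nabla\tilde{u}(s,t)\right|\to0$, and in particular $a_{s},a_{t}\to0$ uniformly. This is false: for the trivial cylinder $\tilde{u}(s,t)=(Ts,\gamma(Tt))$ over a $|T|$-periodic orbit the gradient is a nonzero constant, and the conclusion of the theorem forces $a_{s}\to T\neq0$ and $\lambda(\tilde{u}_{t})\to T\neq0$. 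What does tend to zero is only the $\xi$-component $\pi_{\xi}T\tilde{u}$, since by (\ref{eq:w-energy}) it is controlled by the $\omega$-energy of the tails; the full gradient is merely \emph{bounded}, which is exactly the content of Proposition \ref{pro:gradient bound for finite energy curve} --- a statement you also mischaracterize: it is a global gradient bound for any finite-energy curve, proved by rescaling and a bubbling contradiction against Proposition \ref{pro:Hofer's lemma}, not a ``small energy implies small gradient'' lemma. (Note also that the $\lambda$-energy of a unit subcylinder does \emph{not} tend to zero, precisely because the supremum is taken over all $\phi\in\mathcal{C}$.) If the gradient really tended to zero, your own third paragraph --- where the translated maps subconverge to a limit with $b_{s}\equiv T$ --- would give $T=0$ and the theorem would fail; so this step must be replaced by the bounded-gradient statement.

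Second, you dispose of $T\neq0$ by asserting it follows from ``the energy identities plus the finiteness of $E$.'' In this setting that is not a one-line consequence, and it is one of the places where the asymptotically cylindrical hypothesis genuinely bites: the paper proves $T\neq0$ inside Theorem \ref{thm:subsequence convergence to Reeb} by assuming $T=0$, producing slabs $N_{m}=[a_{m}+C,a_{m}+3C]\times V$ that the curve must cross, taming $J$ by forms of the type $c\omega+c_{m}\sigma\wedge\lambda$ (which uses $d\lambda\to d\lambda_{\infty}$ and (AC6)), and contradicting the Monotonicity Lemma with the vanishing of $\int\tilde{v}^{*}(\omega+\phi_{m}(r)dr\wedge\lambda)$ over those slabs. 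Relatedly, your proposal that $T(s)=\int_{\{s\}\times S^{1}}u^{*}\lambda_{\infty}$ converges ``by Stokes'' requires controlling $\int u^{*}d\lambda_{\infty}$, which is not the $\omega$ appearing in the energy; the paper instead obtains uniqueness of the subsequential period from the Morse--Bott hypothesis. Your final paragraph (coordinates, the equation $z_{s}+Mz_{t}+Sz_{out}+L=0$, exponential decay) is consistent with the paper's Section 3.3, though note that the decay estimates there are two-point boundary estimates on finite cylinders $[s_{0},\mathfrak{s}]$ and require a continuation argument to show the curve never leaves the coordinate neighborhood; that step is not automatic from the local estimates.
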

The above theorem tells us that when $|s|$ is large enough $u(s,t)$
lies inside a small neighborhood of $\gamma$. We will construct a
coordinate chart for such a neighborhood $U\subset S^{1}\times\mathbb{R}^{2n}\to V$,
and then we can view the map $\tilde{u}$ as 
\[
\tilde{u}(s,t)=(a(s,t),\vartheta(s,t),z(s,t))\in\mathbb{R}^{\pm}\times\mathbb{R}\times\mathbb{R}^{2n},
\]
 where $\vartheta$ is the coordinate of the universal cover of $S^{1}=\mathbb{R}/\mathbb{Z}.$
\begin{thm}
\label{thm: exponential convergence} Under the same assumption as
in Theorem \ref{thm:converge to reeb orbit}, there exist constants
$M_{\beta},d_{\beta},a_{0},\vartheta_{0},s_{0}>0$ such that

\begin{align*}
|D^{\beta}\{a(s,t)-Ts-a_{0}\}| & \leqq M_{\beta}e^{\mp d_{\beta}s},\\
|D^{\beta}\{\vartheta(s,t)-Tt-\vartheta_{0}\}| & \leqq M_{\beta}e^{\mp d_{\beta}s},\\
|D^{\beta}z(s,t)| & \leqq M_{\beta}e^{\mp d_{\beta}s},
\end{align*}
for all $s>s_{0}$ and $\beta=(\beta_{1},\beta_{2})\in\mathbb{Z}_{\geqq0}\times\mathbb{Z}_{\geqq0}$.
\end{thm}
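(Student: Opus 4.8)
The plan is to follow the scheme of \cite{Finite energy plane,morse bott,compactness}, adapted to the asymptotically cylindrical setting, and to bootstrap from the $C^\infty(S^1)$ convergence already furnished by Theorem \ref{thm:converge to reeb orbit}. Without loss of generality assume $T>0$ (the case $T<0$ is symmetric, using the asymptotically cylindrical structure at negative infinity), so that $a(s,t)\to+\infty$ and, by Theorem \ref{thm:converge to reeb orbit}, $a(s,t)\geqq\tfrac{T}{2}s$ for all $s$ large; in particular $a\geqq0$ eventually, so (AC3)--(AC4) apply along the image. Using the coordinate chart $U\subset S^1\times\mathbb{R}^{2n}\to V$ around $\gamma$ described before the statement and writing $\tilde u=(a,\vartheta,z)$, I would introduce the deviation $\zeta=(a-Ts,\ \vartheta-Tt,\ z)$ and rewrite $\bar\partial_J\tilde u=0$ in the schematic form
\[
\zeta_s+A_\infty\zeta \;=\; N(s,t,\zeta,\zeta_t)+\mathcal E(s,t),
\]
where $A_\infty$ is the (self-adjoint, first order in $t$) asymptotic operator attached to $J_\infty$ along $\gamma$, $N$ is the genuinely nonlinear term, which vanishes to second order along the orbit cylinder $\{z=0,\ \zeta_s=\zeta_t=0\}$ (so $|N|\leqq C(|z|+|\nabla\zeta|)^2$), and $\mathcal E$ collects the error produced by the failure of translation invariance --- essentially $\tfrac12(J_\infty-J)(\tilde u)\,\partial_t\tilde u$ and its $\omega-\omega_\infty$ counterpart --- together with the linear term $(A(s)-A_\infty)\zeta$ whose coefficient decays exponentially.

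Next I would record the two inputs that drive the argument. First, the Morse--Bott hypothesis (Definition \ref{def:Morse Bott}) implies that $A_\infty$ is self-adjoint on $L^2(S^1)$, that $\ker A_\infty$ is finite dimensional and consists of the geometric directions (the drift $a_0$, the reparametrization $\vartheta_0$, and motion along $N_T$), and that $0$ is isolated in the spectrum with a gap $\delta_0>0$. Second, combining (AC3), (AC4), the lower bound $a\geqq\tfrac{T}{2}s$, and uniform $C^{l}$ bounds on $\tilde u$ on $[s_0,\infty)\times S^1$ obtained by elliptic bootstrapping of $\bar\partial_J\tilde u=0$ together with Proposition \ref{pro:gradient bound for finite energy curve}, one gets
\[
\big\|\mathcal E\big\|_{C^{l-1}([s,\infty)\times S^1)}\;\leqq\;C\,e^{-\delta' s}
\]
for some $\delta'>0$. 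Producing this exponential-in-$s$ control on $\mathcal E$ and all of its derivatives is the genuinely new point relative to the cylindrical case, and it is precisely where the requirement that $J\to J_\infty$ \emph{exponentially} in (AC3) is used.

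With these in hand the remainder is standard. Decompose $\zeta=\zeta^0+\zeta^\perp$ into the $L^2(S^1)$-projections onto $\ker A_\infty$ and its complement. For $\zeta^\perp$, the spectral gap, integration by parts in $t$, the smallness of $z$ and $\nabla\zeta$ from Theorem \ref{thm:converge to reeb orbit} (which lets one absorb $N$ as a higher-order term), and the bound on $\mathcal E$ yield a differential inequality of the form $\tfrac{d^2}{ds^2}\|\zeta^\perp\|_{L^2}^2\geqq c\,\|\zeta^\perp\|_{L^2}^2-C e^{-2\delta' s}$; together with $\|\zeta^\perp(s)\|_{L^2}\to0$ and a comparison lemma this forces $\|\zeta^\perp(s)\|_{L^2}\leqq M e^{-d s}$. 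Here I would note that, because Hofer's energy cannot be chosen simultaneously non-negative and exact, the classical way of seeding such an inequality through $E(\tilde u|_{[s,\infty)\times S^1})\to0$ and Stokes is not available, so one starts instead from the $C^0$ decay already established in Theorem \ref{thm:converge to reeb orbit}. For the kernel part, the projected equation gives $|\partial_s\zeta^0(s)|\leqq C\big(\|\zeta^\perp(s)\|+\|\mathcal E(s)\|+\text{h.o.t.}\big)\leqq C'e^{-d s}$, which is integrable, so $\zeta^0(s)$ converges exponentially to a limit we read off as $(a_0,\vartheta_0,0)$ (the $N_T$-component of the limit being $0$ by the $z\to0$ statement of Theorem \ref{thm:converge to reeb orbit}). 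This proves the three estimates for $\beta=(0,0)$ in $L^2(S^1)$, hence in $C^0$ by a Sobolev/elliptic estimate on unit strips, and a standard interior elliptic bootstrap for $\zeta_s+A_\infty\zeta=N+\mathcal E$ on the strips $[s-1,s+1]\times S^1$ --- using the $C^{l-1}$ control of the coefficients and of $\mathcal E$ --- propagates the decay to all $D^\beta\zeta$ with $|\beta|\leqq l-2$, the loss of two derivatives being a standard feature of the bootstrap traceable to the $C^l$-asymptotics in (AC3). The main obstacle is exactly the $\mathcal E$-estimate: one must control $\tilde u$ in $C^l$ near infinity and convert the exponential decay of $J-J_\infty$ and $\omega-\omega_\infty$ in the $\mathbb R$-coordinate --- which is only a priori comparable to $s$ via Theorem \ref{thm:converge to reeb orbit} --- into exponential decay in $s$, before the self-improving differential inequality can be set up at all.
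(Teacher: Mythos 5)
Your proposal is correct and follows essentially the same route as the paper's proof in Subsection \ref{sub:Exponential-decay}: a coordinate chart around $\gamma$, the asymptotic operator whose Morse--Bott kernel is split off by a projection, a second-order differential inequality for the off-kernel part whose exponentially decaying forcing term is obtained exactly as you describe --- by converting the $r$-exponential decay of (AC3)--(AC4) into $s$-decay through the lower bound $a(s,t)\gtrsim \tfrac{T}{2}(s-s_0)+\flat$ --- followed by a comparison lemma, integration of the projected equation for the kernel directions, and a derivative bootstrap. The only cosmetic differences are that the paper treats $z$ and $(a,\vartheta)$ separately rather than as a single vector $\zeta$, and obtains higher derivatives by promoting $\left(Q_0z,\partial_s(Q_0z),A_0Q_0z,\partial_s(A_0Q_0z)\right)$ to a first-order system of the same type rather than by interior elliptic estimates on unit strips; neither change affects the substance.
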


\section{Proof of main results}

The proofs for $\mathbb{R}^{+}\times V$ and $\mathbb{R}^{-}\times V$
are almost the same, so we will focus on the $\mathbb{R}^{+}\times V$
case. The proof is done in three steps. The first step is to show
that the gradient of a finite Hofer energy $J$-holomorphic curve
$\tilde{u}=(a,u)$ is bounded. The second step is to show ``subsequence
convergence'', briefly, given a sequence of numbers $R_{k}$ converging
to infinity, we want to show that there exists a subsequence $R_{k_{n}}$,
such that $u(R_{k_{n}},t)$ converges to a periodic solution of the
vector field $\mathbf{R}_{\infty}$. The third step is to get some
exponential decay estimate and then prove Theorem \ref{thm:converge to reeb orbit}
and Theorem \ref{thm: exponential convergence}.

\subsection{Gradient bounds}

We cite the following two lemmata for later use.
\begin{lem}
\label{Lem: Hofer's point set topology}\cite{Hofer Weinstein conjecture}
Let $(X,d)$ be a metric space. Equivalent are

(1) $(X,d)$ is complete.

(2) For every continuous map $\phi:X\to[0,+\infty)$ and given $x\in X,$
$\varepsilon>0$ there exist $x'\in X,$ $\varepsilon'>0$ such that\end{lem}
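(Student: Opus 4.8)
The plan is to prove the two implications of the equivalence separately. The content of the lemma is the implication $(1)\Rightarrow(2)$, which is Hofer's iteration argument, and I would prove it by contradiction. Suppose $(X,d)$ is complete but for some continuous $\phi\colon X\to[0,+\infty)$, some $x\in X$ and some $\varepsilon>0$ there is \emph{no} pair $(x',\varepsilon')$ satisfying the four conditions of $(2)$ (namely $\varepsilon'\le\varepsilon$, $d(x,x')\le 2\varepsilon$, $\varepsilon'\phi(x')\ge\varepsilon\phi(x)$, and $\phi(w)\le 2\phi(x')$ whenever $d(w,x')\le\varepsilon'$). Set $x_0:=x$ and $\varepsilon_n:=2^{-n}\varepsilon$, and build a sequence as follows: having chosen $x_n$ with $d(x_n,x)\le\sum_{k<n}2^{-k}\varepsilon<2\varepsilon$ and $\phi(x_n)\ge 2^{n}\phi(x_0)$, examine $\bar B_{\varepsilon_n}(x_n)$. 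If $\sup_{\bar B_{\varepsilon_n}(x_n)}\phi\le 2\phi(x_n)$, then the pair $(x_n,\varepsilon_n)$ meets all four conditions ($\varepsilon_n\le\varepsilon$; $d(x,x_n)\le 2\varepsilon$; $\varepsilon_n\phi(x_n)\ge 2^{-n}\varepsilon\cdot 2^{n}\phi(x_0)=\varepsilon\phi(x)$; and the $\sup$ bound), which is forbidden; hence $\sup_{\bar B_{\varepsilon_n}(x_n)}\phi>2\phi(x_n)$ and we may pick $x_{n+1}\in\bar B_{\varepsilon_n}(x_n)$ with $\phi(x_{n+1})>2\phi(x_n)\ge 2^{n+1}\phi(x_0)$, and $d(x_{n+1},x)\le\sum_{k<n+1}2^{-k}\varepsilon<2\varepsilon$, so the induction continues. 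Since a halt would exhibit the forbidden pair, the procedure never halts and yields an infinite sequence with $d(x_{n+1},x_n)\le 2^{-n}\varepsilon$, hence Cauchy, hence convergent to some $x_\infty\in X$ by completeness; then $\phi(x_n)\to\phi(x_\infty)<\infty$, while $\phi(x_n)\ge 2^{n}\phi(x_0)$ forces $\phi(x_0)=0$, and in that case non-halting at $n=0$ gives $\phi(x_1)>2\phi(x_0)=0$, so $\phi(x_n)>2^{\,n-1}\phi(x_1)\to\infty$ — a contradiction either way. Thus the required pair exists, proving $(2)$.

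For the converse $(2)\Rightarrow(1)$ I would argue contrapositively: assuming $(X,d)$ is not complete, fix a Cauchy sequence $(y_n)$ with no limit in $X$ and build a continuous $\phi$ that $(2)$ cannot accommodate. For each $x$ the real sequence $(d(x,y_n))_n$ is Cauchy, since $|d(x,y_n)-d(x,y_m)|\le d(y_n,y_m)$, so $\rho(x):=\lim_n d(x,y_n)$ is well defined; it is $1$-Lipschitz, hence continuous, and $\rho(x)>0$ everywhere, since $\rho(x)=0$ would give $y_n\to x$. Put $\phi:=1/\rho\colon X\to(0,+\infty)$. Two observations: first, $\rho(y_m)=\lim_n d(y_m,y_n)\to 0$ as $m\to\infty$ by the Cauchy property, so $\phi(y_m)\to+\infty$ and we may fix, for an arbitrary $\varepsilon>0$, an index $m$ with $\rho(y_m)<\varepsilon$; second, if $\rho(p)<\delta$ then $d(p,y_k)\to\rho(p)<\delta$ forces $y_k\in\bar B_\delta(p)$ for all large $k$, whence $\sup_{\bar B_\delta(p)}\phi=+\infty$. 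Now apply $(2)$ to $\phi$, to the point $y_m$, and to $\varepsilon$: it produces $x'$ and $\varepsilon'\in(0,\varepsilon]$ with $d(y_m,x')\le 2\varepsilon$, $\varepsilon'\phi(x')\ge\varepsilon\phi(y_m)$, and $\sup_{\bar B_{\varepsilon'}(x')}\phi\le 2\phi(x')<\infty$. By the second observation the finiteness of this $\sup$ forces $\rho(x')\ge\varepsilon'$; substituting this into $\varepsilon'\phi(x')\ge\varepsilon\phi(y_m)$, which reads $\varepsilon'\rho(y_m)\ge\varepsilon\rho(x')$, gives $\varepsilon'\rho(y_m)\ge\varepsilon\varepsilon'$, so $\rho(y_m)\ge\varepsilon$, contradicting the choice of $m$. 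Hence every Cauchy sequence converges and $(X,d)$ is complete.

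Everything here is elementary, and I expect only two points to require genuine thought. In $(1)\Rightarrow(2)$ it is the verification that a non-terminating iteration truly contradicts completeness: one must track that the $\phi$-values double along a Cauchy sequence and therefore blow up at its limit, and dispose separately of the degenerate case $\phi(x)=0$. In $(2)\Rightarrow(1)$ it is the choice of test function, the crucial point being to define $\rho$ as the limit $\lim_n d(x,y_n)$ rather than the infimum $\inf_n d(x,y_n)$, so that ``$\phi$ is bounded on a ball about $x'$'' is equivalent to ``$x'$ stays a definite distance from the tail of $(y_n)$''. The remaining estimates — the $1$-Lipschitz bound for $\rho$ and the geometric series $\sum_{k\ge 0}2^{-k}\varepsilon=2\varepsilon$ — are routine.
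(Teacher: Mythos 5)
The paper does not prove this lemma at all; it is quoted verbatim from Hofer's paper \cite{Hofer Weinstein conjecture} and used as a black box, so there is no in-paper proof to compare against. Your argument is correct and complete in both directions: the implication $(1)\Rightarrow(2)$ is exactly Hofer's doubling iteration (and you correctly handle the degenerate case $\phi(x)=0$, where the inequality $\varepsilon'\phi(x')\geqq\varepsilon\phi(x)$ is vacuous but the non-halting iteration still produces $\phi(x_1)>0$ and hence blow-up), while your converse via $\phi=1/\rho$ with $\rho(x)=\lim_n d(x,y_n)$ is a clean and standard construction — the choice of the limit rather than the infimum is indeed the point that makes boundedness of $\phi$ on $\bar B_{\varepsilon'}(x')$ equivalent to $\rho(x')\geqq\varepsilon'$. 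Nothing further is needed.
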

\begin{itemize}
\item $\varepsilon'\leqq\varepsilon,\:\phi(x')\varepsilon'\geqq\phi(x)\varepsilon$,
\item $d(x,x')\leqq2\varepsilon,$
\item $2\phi(x')\geqq\phi(y)$ for all $y\in X$ with $d(y,x')\leqq\varepsilon'$. 
\end{itemize}

Let $J$ be an asymptotically cylindrical almost complex structure
on $W=\mathbb{R}^{+}\times V$ at $\infty$, let $\tilde{u}=(a,u)$
be a $J$-holomorphic map from $B(0,R)$ to $W,$ where $B(z_{0},R):=z=\left\{ s+\sqrt{-1}t\in\mathbb{C}:\left|z-z_{0}\right|<R\right\} $,
denote

\begin{equation}
\left\Vert \nabla\tilde{u}\right\Vert :=\underset{(s,t)\in B(0,R)}{\sup}\left|\nabla\tilde{u}(s,t)\right|\label{eq:definition of gradient}
\end{equation}
 and 
\[
\left\Vert \tilde{u}\right\Vert _{C^{k}(B(0,R),W)}:=\underset{x\in B(0,R)}{\sup}\sum_{|l|=0}^{k}|\nabla^{l}\tilde{u}(x)|,
\]
where the norm $|\cdot|$ is computed with respect to the standard
metric $ds^{2}+dt^{2}$ on $B(z_{0},R)$ and a translationally invariant
metric $g_{W}$ on $W$, for example $g_{W}=g_{V}+dr^{2},$ and $\nabla$
is the the Levi-Civita connection with respect to $g_{W}$ on $W.$
The following lemma says that the gradient bound implies $C^{\infty}$
bound.
\begin{lem}
(Gromov-Schwarz)\label{lem:Gromov-Schwarz} Fix $0<\varepsilon<1$
and $k\in\mathbb{N}$, if $\left\Vert \nabla\tilde{u}\right\Vert <C'<+\infty,$
then there exists $C(k,C')>0$ such that 
\[
\left\Vert \tilde{u}\right\Vert _{C^{k}(B(0,R-\varepsilon),W)}\leqq C(k,C'),
\]
where $C(k,C')$ does not depend on $\tilde{u}$. \end{lem}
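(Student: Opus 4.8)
The plan is to use elliptic regularity bootstrapping together with the fact that a gradient bound converts the nonlinear Cauchy--Riemann equation into a uniformly elliptic system with controlled coefficients. First I would observe that the gradient bound $\|\nabla\tilde u\|<C'$ together with connectedness of $B(0,R)$ and the fact that $W=\mathbb{R}\times V$ with $V$ closed gives a priori control on the image: since $|\nabla\tilde u|\le C'$ on a ball of radius $R$, the $\mathbb{R}$-coordinate $a$ varies by at most $2RC'$ on $B(0,R-\varepsilon/2)$ around any chosen basepoint, so $\tilde u$ maps into a fixed compact subset $K\subset W$ (a bounded slab cross $V$) up to translation in the $\mathbb{R}$-factor. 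Composing with $f_{-a(\text{basepoint})}$ we may assume the image lies in a fixed compact $K$; the estimates we prove are translation-invariant in the target, so this is harmless.

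Next I would set up the bootstrap. Working in a local chart on $W$ (or via the embedding $V\hookrightarrow\mathbb{R}^N$ and extending $J$ to a neighborhood), the $J$-holomorphic equation reads $\tilde u_s + J(\tilde u)\tilde u_t = 0$, i.e. $\tilde u_s = -J(\tilde u)\tilde u_t$. Differentiating, $\tilde u$ satisfies a second-order equation of the form $\Delta \tilde u = Q(\tilde u)(\nabla\tilde u,\nabla\tilde u)$ where $Q$ depends smoothly on $\tilde u\in K$ and is hence bounded along with all its derivatives. With $\|\nabla\tilde u\|\le C'$, the right-hand side is bounded in $L^\infty$, so by interior $L^p$ elliptic estimates $\tilde u\in W^{2,p}_{loc}$ for all $p<\infty$ on slightly smaller balls, with norm bounded in terms of $C'$, $p$, and $\varepsilon$ only. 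Sobolev embedding gives a $C^{1,\alpha}$ bound. Then the right-hand side $Q(\tilde u)(\nabla\tilde u,\nabla\tilde u)$ is $C^{0,\alpha}$-bounded, so Schauder estimates give a $C^{2,\alpha}$ bound on a further shrunken ball; feeding this back, the right-hand side is $C^{1,\alpha}$, hence $\tilde u$ is $C^{3,\alpha}$, and inductively $\tilde u\in C^{k,\alpha}$ with bounds depending only on $k$, $C'$, and the distance to the boundary. Since $\varepsilon$ is fixed, a finite iteration (each step costing an arbitrarily small fraction of $\varepsilon$, distributed so the total shrinkage is $\le\varepsilon$) yields the claimed bound on $B(0,R-\varepsilon)$, with $C(k,C')$ independent of $\tilde u$ and of $R$ (for $R$ bounded below, which is the only case of interest; for the uniformity one rescales or notes the estimates are local).

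The one genuine subtlety — and the main thing to be careful about rather than a deep obstacle — is the translation-invariance and the dependence of the constant on $R$. Because the target metric $g_W$ and the extension of $J$ near $K$ are translationally invariant, the constants in the elliptic estimates do not see which translate of $K$ we landed in, so $C(k,C')$ is genuinely uniform over all $\tilde u$ with the given gradient bound. For the $R$-dependence: the argument is purely interior, so the constant depends only on the number of bootstrap steps (fixed by $k$), on $C'$, and on the fixed margin $\varepsilon$; one should either state the lemma for a fixed $R$ or note that for $R\ge R_0>0$ the constant can be taken uniform by a covering argument. I would also remark that the $W^{2,p}$ and Schauder constants depend on the $C^{l}$-norm of the coefficients of the equation, which in turn depend on finitely many derivatives of $J$ on $K$; these are finite and fixed, so they are absorbed into $C(k,C')$. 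In short, this is the standard Gromov--Schwarz elliptic bootstrap, and the only place asymptotic cylindricity enters is in guaranteeing that $J$ (and $g_W$) have bounded geometry on the relevant translates of $K$, which is immediate from (AC3).
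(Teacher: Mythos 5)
Your proposal is correct and follows essentially the same route as the paper: the paper likewise uses the gradient bound to obtain uniform coordinate charts in the domain and the (translated) target and then invokes the standard elliptic regularity result (Proposition 2.36 in \cite{Audin and lafon}), which is exactly the $L^{p}$/Schauder bootstrap for $\Delta\tilde{u}=Q(\tilde{u})(\nabla\tilde{u},\nabla\tilde{u})$ that you write out in detail. Your additional remarks on translation invariance of the constants and the role of (AC3) are consistent with what the paper leaves implicit.
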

\begin{proof}
This is a standard result. Using the gradient bound of $\tilde{u}$,
we can find uniform coordinate charts both in domain and in target,
then we can apply Proposition 2.36 in \cite{Audin and lafon}. 
\end{proof}
The following proposition is one of the key steps in \cite{Hofer Weinstein conjecture}
whose proof reveals the relation between $\omega$ energy and trajectory
of $\mathbf{R}_{\infty}.$
\begin{prop}
\label{pro:Hofer's lemma}\cite{Hofer Weinstein conjecture} Suppose
$J$ is a cylindrical almost complex structure on $\mathbb{R}\times V$
and $\tilde{u}=(a,u):\mathbb{C}\to\mathbb{R}\times V$ is a finite
Hofer energy $J$-holomorphic plane (i.e. $E(\tilde{u})=E_{\lambda}(\tilde{u})+E_{\omega}(\tilde{u})<+\infty$).
If $E_{\omega}(\tilde{u})=0$ and $\left\Vert \nabla\tilde{u}\right\Vert \leqq C$
for some $C>0$, then $\tilde{u}$ is constant. \end{prop}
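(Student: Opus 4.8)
The plan is to exploit the fact that for a cylindrical (translationally invariant) almost complex structure the quantity $E_\omega$ measures exactly the failure of $u$ to be tangent to the $\mathbf{R}_\infty$-direction, since by \eqref{eq:w-energy} the integrand equals $\omega(\pi_\xi\tilde u_s,J\pi_\xi\tilde u_s)\,ds\wedge dt$, which by (AC2) is a nonnegative multiple of $|\pi_\xi\tilde u_s|^2$. Hence $E_\omega(\tilde u)=0$ forces $\pi_\xi\tilde u_s\equiv 0$ on all of $\mathbb C$, and by the $J$-holomorphic equation also $\pi_\xi\tilde u_t\equiv 0$; so the image of $\tilde u$ lies in a surface spanned by $\partial_r$ and $\mathbf{R}$. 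In the cylindrical case $\mathbf{R}=\mathbf{R}_\infty$ is tangent to the $V$-slices and spans a line field whose flow $\phi^t$ preserves all the structure, so the projection $u$ of $\tilde u$ to $V$ has image contained in a single (possibly closed) orbit of $\mathbf{R}_\infty$; combined with $a$ this means $\tilde u$ factors through a holomorphic map into $\mathbb R\times(\text{orbit})$, essentially a cylinder $\mathbb R\times S^1$ or a plane $\mathbb R\times\mathbb R$.

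The next step is to analyze this reduced map. Writing $\tilde u(s,t)=(a(s,t),\gamma(b(s,t)))$ where $\gamma$ is the relevant orbit of $\mathbf{R}_\infty$ parametrized by arclength in the $\mathbf{R}_\infty$-flow, the $J$-holomorphic condition together with $J(\partial_r)=\mathbf{R}$ becomes the Cauchy--Riemann equations $a_s=b_t$, $a_t=-b_s$ (up to the period normalization), so $a+\sqrt{-1}\,b$ is a holomorphic function on $\mathbb C$. The gradient bound $\|\nabla\tilde u\|\le C$ then says this entire holomorphic function has bounded derivative, hence by Liouville it is affine: $a+\sqrt{-1}\,b=\alpha z+\beta$ for constants $\alpha,\beta$. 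Now I invoke finiteness of $E_\lambda$: by \eqref{eq:r-energy} the $\lambda$-energy integrand is $\phi(a)(\sigma(\tilde u_s)^2+\lambda(\tilde u_s)^2)=\phi(a)(a_s^2+b_s^2)=\phi(a)|\alpha|^2$, and $\sup_{\phi\in\mathcal C}\int_{\mathbb C}\phi(a)|\alpha|^2\,ds\,dt<\infty$ forces $\alpha=0$ unless $a$ is somehow bounded — but if $\alpha\ne 0$ then $a$ is a nonconstant affine real function on $\mathbb C$, so it is unbounded, and choosing $\phi$ supported near any value in its range makes the integral infinite (the preimage has infinite area). Hence $\alpha=0$, so $a$ and $b$ are both constant, and $\tilde u$ is constant.

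One subtlety to handle carefully: if the orbit $\gamma$ is not periodic but merely an orbit of $\mathbf{R}_\infty$ whose closure is, say, a torus, then $b$ lives in $\mathbb R$ rather than $\mathbb R/(\text{period})\mathbb Z$ and the argument that $a+\sqrt{-1}\,b$ is globally holomorphic on $\mathbb C$ still goes through since $\mathbb C$ is simply connected; the Liouville step is unaffected. Another point is the possibility $\pi_\xi\tilde u_s=0$ but $\tilde u$ is constant in the $(\partial_r,\mathbf{R})$-plane too from the start — that is exactly the conclusion, so there is nothing to rule out. I should also note that $E_\omega(\tilde u)=0$ together with the $J$-holomorphicity and \eqref{eq:w-energy} is what pins down $\pi_\xi\tilde u\equiv 0$ pointwise, not just almost everywhere, because the integrand is continuous and nonnegative.

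The main obstacle I expect is the bookkeeping in the second paragraph: making the reduction ``image in a single orbit $\Rightarrow$ a holomorphic function on $\mathbb C$'' fully rigorous requires being a little careful about the parametrization of the orbit and the fact that the $\mathbf{R}_\infty$-flow preserves $\lambda_\infty$ (so that the arclength parameter $b$ is well-defined and the Cauchy--Riemann form of the equation is clean); and the final Liouville-plus-energy argument must correctly use the precise normalization $\mathcal C=\{\phi\in C_c^\infty(\mathbb R,[0,1])\mid\int\phi=1\}$ to conclude unboundedness of $a$ is incompatible with finite $E_\lambda$. Everything else is a direct consequence of (AC1)--(AC2), translational invariance, and Liouville's theorem.
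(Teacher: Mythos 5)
Your proposal is correct and follows essentially the same route as the paper: use $E_\omega=0$ together with (\ref{eq:w-energy}) and (AC2) to force $\pi_\xi\circ T\tilde u\equiv0$, reduce to a map into $\mathbb R\times(\text{orbit of }\mathbf R)$, observe that the resulting pair of real functions satisfies the Cauchy--Riemann equations so that (in the paper's notation) $\Phi=f+ia$ is an entire function with bounded gradient, apply Liouville to conclude it is affine, and then show via (\ref{eq:r-energy}) that a nonzero slope makes $E_\lambda(\tilde u)=+\infty$. The only cosmetic difference is that you phrase the final step as ``$\alpha=0$, hence constant'' while the paper phrases it as a contradiction with $E(\tilde u)<+\infty$; the content is identical.
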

\begin{proof}
Suppose $\tilde{u}$ is not constant. By (\ref{eq:w-energy}), $\pi_{\xi}\tilde{u}_{s}=0=\pi_{\xi}\tilde{u}_{t}$.
Hence $\pi_{\xi}\circ T\tilde{u}$ is the zero section of $\tilde{u}^{*}\xi\to\mathbb{C}$.
Therefore we have $u(s,t)=x\circ f(s,t)$, where $x:\mathbb{R}\to V$
satisfies $\dot{x}=\mathbf{R}(x)$ and $f:\mathbb{C}\to\mathbb{R}$
is a smooth function. Consequently, $f_{s}=-a_{t}$; $f_{t}=a_{s}$.
Hence $\Phi:=f+ia$ is a holomorphic function on $\mathbb{C}$. Since
$\left\Vert \nabla\tilde{u}\right\Vert $ is bounded, $\left\Vert \nabla\Phi\right\Vert $
is bounded; thus $\Phi$ is a linear function. By (\ref{eq:r-energy})
\begin{align*}
E_{\lambda}(\tilde{u}) & =\underset{\phi\in\mathcal{C}}{\sup}\intop_{\mathbb{C}}\phi(a)(a_{s}^{2}+a_{t}^{2})ds\wedge dt=+\infty,
\end{align*}
 via a linear change of variables.
\end{proof}
The proposition below generalizes Proposition 27 in \cite{Hofer Weinstein conjecture}
to the asymptotically cylindrical case. 
\begin{prop}
\label{pro:gradient bound for finite energy curve} If $J$ is an
asymptotically cylindrical almost complex structure on $W=\mathbb{R}^{+}\times V$
at $\infty$, and $\tilde{u}$ is a $J$-holomorphic map from $\mathbb{C}$
to $W$ satisfying $E(\tilde{u})<+\infty$, then we get $\left\Vert \nabla\tilde{u}\right\Vert <+\infty$.\end{prop}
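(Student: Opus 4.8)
The plan is to argue by contradiction, following the classical "bubbling off" / rescaling scheme of Hofer, but paying attention to the failure of translation-invariance. Suppose $\|\nabla\tilde u\|=+\infty$. Then there is a sequence $z_k\to\infty$ (we may assume this by an exhaustion argument, since $\tilde u$ is smooth hence has bounded gradient on compact sets) with $|\nabla\tilde u(z_k)|\to\infty$. Apply Lemma \ref{Lem: Hofer's point set topology} (Hofer's lemma) with $\phi(z)=|\nabla\tilde u(z)|$, $x=z_k$, and a suitable $\varepsilon=\varepsilon_k\to 0$ with $\varepsilon_k|\nabla\tilde u(z_k)|\to\infty$: this yields new points $z_k'$ and radii $\varepsilon_k'\le\varepsilon_k$ with $R_k:=\varepsilon_k'|\nabla\tilde u(z_k')|\to\infty$, $|\nabla\tilde u|\le 2|\nabla\tilde u(z_k')|=:2c_k$ on $B(z_k',\varepsilon_k')$, and $c_k\to\infty$. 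Now rescale: set $v_k(\zeta):=\tilde u(z_k'+\zeta/c_k)$ for $|\zeta|<R_k$. Then $|\nabla v_k|\le 2$ everywhere and $|\nabla v_k(0)|=1$, so by Lemma \ref{lem:Gromov-Schwarz} (Gromov--Schwarz / elliptic bootstrapping) the $v_k$ are bounded in $C^\infty_{\mathrm{loc}}$.

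The key new subtlety, relative to the cylindrical case, is what equation $v_k$ satisfies and what the limit looks like. Write $z_k'=s_k+it_k$; since the gradient is bounded on compacta, necessarily $s_k\to+\infty$ (the image is unbounded and the blow-up escapes to infinity — one should check the $a$-component of $z_k'$ goes to $+\infty$, using that on any fixed half-space $\{\mathrm{Re}\le N\}$ the preimage is "small" in an appropriate sense, or simply reduce to that case). The map $v_k$ is $J_k$-holomorphic where $J_k(\zeta)=J(z_k'+\zeta/c_k)$; but I should not expect $J_k$ to converge to $J_\infty$ on the nose, because $v_k$ itself may drift in the $\mathbb R$-direction. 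Instead, precompose with the translation $f_{-a_k}$ where $a_k=a(z_k')$: set $w_k=f_{-a_k}\circ v_k=(a\circ(\cdots)-a_k,\;u\circ(\cdots))$. Because $f_{a_k}$ is NOT $J$-holomorphic in the asymptotically cylindrical setting, $w_k$ solves a perturbed equation $Tw_k\circ j=\widetilde J_k(w_k)\circ Tw_k$ with $\widetilde J_k(r,v)=df_{-a_k}\,J(r+a_k,v)\,df_{a_k}$; but by (AC3) and $s_k\to\infty$, $\widetilde J_k\to J_\infty$ in $C^{l-1}_{\mathrm{loc}}$ (the exponential decay rate $e^{-\delta_l r}$ controls the error once $\mathrm{Re}\to\infty$). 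So $w_k\to w_\infty$ in $C^\infty_{\mathrm{loc}}$, a nonconstant $J_\infty$-holomorphic plane $\mathbb C\to W$ with $\|\nabla w_\infty\|\le 2$.

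Next I control the energy of $w_\infty$. By Fatou / weak-* convergence of the pulled-back forms, and using that $f_s^*\omega\to\omega_\infty$, $f_s^*\lambda\to\lambda_\infty$, $f_s^*\sigma=\sigma$ (by (AC3),(AC4) and translation-invariance of $\sigma$), one gets $E_{\omega_\infty}(w_\infty)\le\liminf E_\omega(\tilde u|_{B(z_k',\varepsilon_k')})=0$ since these energies over shrinking (in original coordinates) disks — or rather disks on which the total $\omega$-energy of $\tilde u$ tends to zero because $E_\omega(\tilde u)<\infty$ and the disks $B(z_k',\varepsilon_k')$ can be taken disjoint — tend to $0$; similarly $E_{\lambda_\infty}(w_\infty)<\infty$. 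Here one must be a little careful because the modified class $\mathcal C$ uses $[0,1]$-valued $\phi$; but monotonicity of $E_\lambda$ under restriction still gives $E_{\lambda_\infty}(w_\infty)\le E_\lambda(\tilde u)<\infty$ after accounting for the coordinate change, and $E_{\omega_\infty}(w_\infty)=0$. Then Proposition \ref{pro:Hofer's lemma}, applied to the cylindrical structure $J_\infty$ (with $\omega_\infty$), forces $w_\infty$ to be constant — contradicting $|\nabla w_\infty(0)|=\lim|\nabla w_k(0)|=1$. This contradiction proves $\|\nabla\tilde u\|<\infty$.

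The main obstacle is the energy-vanishing step combined with the translation correction: making rigorous that $E_{\omega_\infty}(w_\infty)=0$ requires (i) that the blow-up disks carry asymptotically zero $\omega$-energy of $\tilde u$ — which needs $\omega\ge 0$ and finite total energy, but also needs the disks to be essentially disjoint in the domain, so the choice of $\varepsilon_k$ and the re-selection via Hofer's lemma must be arranged so that $B(z_k',\varepsilon_k')$ can be taken mutually disjoint (a standard but slightly fiddly diagonal argument); and (ii) controlling the discrepancy between $\int v_k^*\omega$ and $\int w_k^*\omega_\infty$, which is exactly where the exponential decay in (AC3)--(AC4) and $s_k\to+\infty$ must be invoked quantitatively, together with the uniform gradient bound $|\nabla w_k|\le 2$ to keep the images in a region where the decay estimate applies. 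The secondary difficulty is verifying $s_k\to+\infty$ rather than $s_k$ bounded; if $s_k$ stayed bounded the limit would be a $J$-holomorphic plane for the original (non-cylindrical) $J$, for which Proposition \ref{pro:Hofer's lemma} does not directly apply, so one needs a separate argument — typically the monotonicity lemma plus finiteness of $E_\omega$ — to exclude blow-up in a bounded region of $W$, or to treat it via a different (standard closed-manifold-type) bubbling analysis.
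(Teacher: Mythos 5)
Your proposal follows essentially the same route as the paper's proof: contradiction via Hofer's point-set lemma, rescaling, a case split on whether $a(z_k')$ is bounded, an $\mathbb{R}$-translation in the unbounded case producing a $J_{\infty}$-holomorphic limit plane with $E_{\omega_{\infty}}=0$ and finite $E_{\lambda_{\infty}}$, the quantitative use of (AC3)--(AC4) together with the shrunken radii to keep the image deep in the end, and Proposition \ref{pro:Hofer's lemma} for the final contradiction. The only point where you are thinner is the bounded-$a(z_k')$ case, which the paper settles without monotonicity by showing the limit plane has both $E_{\omega}=0$ and $E_{\lambda}=0$ (hence is constant, contradicting $|\nabla\tilde{u}_{\infty}(0)|=1$) because the integrals of $\tilde{u}^{*}\omega$ and $\tilde{u}^{*}(\phi(r)\sigma\wedge\lambda)$ over the shrinking disks vanish; note also that your worry about disjointness of the disks is unnecessary, since absolute continuity of the integral of the fixed nonnegative integrable density $\tilde{u}^{*}\omega$ already gives that vanishing.
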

\begin{proof}
Suppose to the contrary, then there exist a sequence of points $z_{k}\in\mathbb{C}$,
satisfying, $|z_{k}|\to\infty$, $R_{k}:=\left\Vert \nabla\tilde{u}(z_{k})\right\Vert \to\infty$,
as $k\to\infty$. By Lemma \ref{Lem: Hofer's point set topology},
we can modify $z_{k}$ such that there exist a sequence of $\varepsilon_{k}>0$
satisfying: $\varepsilon_{k}\to0,$ $\varepsilon_{k}R_{k}\to+\infty,$
and $\left|\nabla\tilde{u}(z)\right|\leqq2R_{k}$ for $z\in B(z_{k},\varepsilon_{k})$.
Now there are two cases.

\textbf{Case1}. $\{a(z_{k})\}_{k\in\mathbb{Z}}$ is unbounded.

Then there exist a subsequence of $z_{k}$, still denoted by $z_{k}$,
such that $a(z_{k})\to+\infty$ or $a(z_{k})\to-\infty$. WLOG, let's
assume $a(z_{k})\to+\infty$. Pick a further subsequence of $z_{k}$,
such that $a(z_{k})\geqq2^{k+2}$. Let $\varepsilon_{k}^{'}:=\min\left\{ \varepsilon_{k},\frac{2^{k}}{R_{k}}\right\} $,
then we have $\varepsilon_{k}^{'}\to0$, $\varepsilon_{k}^{'}R_{k}\to+\infty$,
and $|a(z)-a(z_{k})|\leqq2\varepsilon_{k}^{'}R_{k}\leqq2\cdot\frac{2^{k}}{R_{k}}\cdot R_{k}=2^{k+1}$,
for $|z-z_{k}|\leqq\varepsilon_{k}^{'}$. Thus, $a(z)\geqq a(z_{k})-2^{k+1}\geqq2^{k+2}-2^{k+1}=2^{k+1}$,
for $|z-z_{k}|\leqq\varepsilon_{k}^{'}$.

Since $\tilde{u}$ is $J$-holomorphic, we have

\begin{equation}
J(\tilde{u})\circ T\tilde{u}=T\tilde{u}\circ i.
\end{equation}
 Thus
\begin{equation}
J_{\infty}(\tilde{u})\circ T\tilde{u}=T\tilde{u}\circ i+(J_{\infty}-J)(\tilde{u})\circ T\tilde{u}.\label{eq:1}
\end{equation}
 By (AC1), we have%
\footnote{Actually, to prove Proposition \ref{pro:gradient bound for finite energy curve},
Proposition \ref{pro:gradient bound for v holomorphic cylinder} and
Theorem \ref{thm:subsequence convergence to Reeb} we only need $f_{s}^{*}J\to J_{\infty}$
in $C_{loc}^{1}$, as $s\to\infty$. We need the stronger condition
(AC1) to prove exponential decay in \ref{sub:Exponential-decay} and
thus the main theorems.%
} 
\[
\underset{z\in B(z_{k},\varepsilon_{k}^{'})}{\sup}\left\Vert (J_{\infty}-J)(\tilde{u}(z))\right\Vert \to0,
\]
 as $k\to+\infty$. 

Define maps $\tilde{u}_{k}(z)=(a(z_{k}+z/R_{k})-a(z_{k}),u(z_{k}+z/R_{k}))$
from $\mathbb{C}$ to $\mathbb{R}\times V$. For any $R'>0$, when
$k$ is large, $\left\Vert \nabla\tilde{u}_{k}(z)\right\Vert \leqq2$
for $z\in B(0,R')$. By Lemma \ref{lem:Gromov-Schwarz}, for any $n\in\mathbb{Z}_{\geqq0},$
there exists $C(n,R')$ satisfying 
\begin{equation}
\left\Vert \tilde{u}_{k}\right\Vert _{C^{n}(B(0,R'-1),W)}\leqq C(n,R').\label{eq:ubar}
\end{equation}
 We also have

\begin{equation}
\left|\nabla\tilde{u}_{k}(0)\right|=1
\end{equation}
\begin{equation}
\left|\nabla\tilde{u}_{k}(z)\right|\leqq2\mbox{ \ensuremath{\mbox{ for all }}}|z|\leqq\varepsilon_{k}^{'}R_{k}.
\end{equation}
 We apply Ascoli-Arzela theorem to get a subsequence, still called
$\tilde{u}_{k}$, satisfying $\tilde{u}_{k}\to\tilde{u}_{\infty}$
in $C_{loc}^{\infty}$, as $k\to\infty$. Here $\tilde{u}_{\infty}:\mathbb{C}\to\mathbb{R}\times V$
is a $J_{\infty}$-holomorphic map satisfying 
\[
\begin{array}{cc}
\left|\nabla\tilde{u}_{\infty}(0)\right|=1\qquad & \left\Vert \nabla\tilde{u}_{\infty}\right\Vert \leqq2.\end{array}
\]
 Indeed, $\tilde{u}_{k}$ satisfies

\begin{equation}
J_{\infty}(\tilde{u}_{k})T\tilde{u}_{k}=T\tilde{u}_{k}i+o_{k},
\end{equation}
 where $\left\Vert o_{k}\right\Vert _{C^{0}\left(B(0,\varepsilon_{k}^{'}R_{k})\right)}\to0$
as $k\to\infty.$ Therefore, $\tilde{u}_{\infty}$ is $J_{\infty}$-holomorphic. 

Now let's look at its energy. 

\begin{align}
\intop_{B(0,R')}\hspace{-1em}\tilde{u}_{k}^{*}\omega_{\infty} & =\intop_{B(z_{k},R'/R_{k})}\hspace{-1em}\tilde{u}^{*}\omega+\intop_{B(z_{k},R'/R_{k})}\hspace{-1em}\tilde{u}^{*}(\omega-\omega_{\infty}).
\end{align}
 From $E(\tilde{u})<+\infty$ we see $\intop_{B(z_{k},R'/R_{k})}\tilde{u}^{*}\omega\to0,$
as $k\to+\infty$. While,

\begin{align*}
\left|\intop_{B(z_{k},R'/R_{k})}\hspace{-2em}\tilde{u}^{*}(\omega_{\infty}-\omega)\right| & \leqq\intop_{B(z_{k},R'/R_{k})}\hspace{-2em}(2R_{k})^{2}\left|(\omega_{\infty}-\omega)\left(\frac{\tilde{u}_{s}}{2R_{k}},\frac{\tilde{u}_{t}}{2R_{k}}\right)\right|ds\wedge dt\\
 & \leqq\pi\left(\frac{R'}{R_{k}}\right)(2R_{k})^{2}c_{k}\to0,
\end{align*}
 where $c_{k}:=\underset{z\in B(z_{k},\varepsilon_{k}^{'})}{\sup}\left|(\omega_{\infty}-\omega)\left(\frac{\tilde{u}_{s}}{2R_{k}},\frac{\tilde{u}_{t}}{2R_{k}}\right)\right|,$
and by (AC4) $c_{k}\to0$ as $k\to\infty.$ Therefore, 
\[
E_{\omega_{\infty}}(\tilde{u}_{\infty})=\intop_{\mathbb{C}}\tilde{u}_{\infty}^{*}\omega_{\infty}=0.
\]
Moreover, we have $E_{\lambda_{\infty}}(\tilde{u}_{\infty})<+\infty$.
Indeed, given $\phi\in\mathcal{C}$, denote $\phi_{k}(r):=\phi(r-a(z_{k}))\in\mathcal{C}$.
Then we have 
\begin{align}
 & \left|\intop_{B(0,R')}\hspace{-1em}\tilde{u}_{k}^{*}(\phi(r)dr\wedge\lambda_{\infty})\right|\nonumber \\
\leqq & \left|\intop_{B(z_{k},R'/R_{k})}\hspace{-2em}\phi_{k}(a)\tilde{u}^{*}(\sigma\wedge\lambda)\right|+\left|\intop_{B(z_{k},R'/R_{k})}\hspace{-2em}\phi_{k}(a)\tilde{u}^{*}(dr\wedge\lambda_{\infty}-\sigma\wedge\lambda)\right|.\label{eq:7}
\end{align}
While,

\begin{equation}
\left|\;\intop_{B(z_{k},R'/R_{k})}\hspace{-1.5em}\phi_{k}(a)\tilde{u}^{*}(\sigma\wedge\lambda)\right|\leqq\left|\;\intop_{\mathbb{C}}\phi_{k}(a)\tilde{u}^{*}(\sigma\wedge\lambda)\right|\leqq E_{\lambda}(\tilde{u})\label{eq:8}
\end{equation}
and

\begin{align}
 & \left|\intop_{B(z_{k},R'/R_{k})}\hspace{-1.5em}\phi_{k}(a)\tilde{u}^{*}(dr\wedge\lambda_{\infty}-\sigma\wedge\lambda)\right|\nonumber \\
\leqq & \intop_{B(z_{k},R'/R_{k})}\hspace{-1.5em}\phi_{k}(a)(2R_{k})^{2}\left|(dr\wedge\lambda_{\infty}-\sigma\wedge\lambda)\left(\frac{\tilde{u}_{s}}{2R_{k}},\frac{\tilde{u}_{t}}{2R_{k}}\right)\right|ds\wedge dt\nonumber \\
\leqq & \left(\underset{x\in\mathbb{R}}{\sup}\phi(x)\right)(2R_{k})^{2}r_{k}\pi\left(\frac{R'}{R_{k}}\right)^{2}\to0,\label{eq:10}
\end{align}
where $r_{k}:=\underset{z\in B(z_{k},R'/R_{k})}{\sup}\left|(dr\wedge\lambda_{\infty}-\sigma\wedge\lambda)\left(\frac{\tilde{u}_{s}}{2R_{k}},\frac{\tilde{u}_{t}}{2R_{k}}\right)\right|\to0$
as $k\to\infty.$ Combining (\ref{eq:7}),(\ref{eq:8}),(\ref{eq:10}),
we get: given $R'>0$ and $\phi\in\mathcal{C}$, there exists a constant
$K$  such that for all $k>K,$

\[
\left|\intop_{B(0,R')}\tilde{u}_{k}^{*}(\phi(r)dr\wedge\lambda_{\infty})\right|\leqq E_{\lambda}(\tilde{u})+1.
\]
 Therefore, $E_{\lambda_{\infty}}(\tilde{u}_{\infty})\leqq E_{\lambda}(\tilde{u})+1$.
Altogether, we get a $J_{\infty}$-holomorphic map $\tilde{u}_{\infty}:\mathbb{C}\to W$
satisfying

\[
\begin{array}{cc}
\;\:\:\left\Vert \nabla\tilde{u}_{\infty}\right\Vert \leqq2\qquad & \left|\nabla\tilde{u}_{\infty}(0)\right|=1\\
E_{\omega_{\infty}}(\tilde{u}_{\infty})=0 & \qquad E(\tilde{u}_{\infty})<+\infty.
\end{array}
\]

By Proposition \ref{pro:Hofer's lemma}, we get a contradiction, which
finishes the proof for Case 1.

\textbf{Case 2}: $\{a(z_{k})\}_{k\in\mathbb{Z}}$ is bounded.

Now let us define $\tilde{u}_{k}$ differently from Case 1 by: 
\[
\tilde{u}_{k}(z):=\tilde{u}\circ l_{k}=(a(z_{k}+z/R_{k}),u(z_{k}+z/R_{k})),
\]
then $\tilde{u}_{k}$ satisfies 
\[
\begin{array}{cc}
\left|\nabla\tilde{u}_{k}(z)\right|\leqq2\;\mbox{for }z\in B(0,\varepsilon_{k}R_{k});\\
\{\tilde{u}_{k}(0)\}_{k\in\mathbb{Z}^{+}}\mbox{ is bounded; }\qquad\quad & \left|\nabla\tilde{u}(0)\right|=1.
\end{array}
\]
Similarly as in Case 1, by applying Ascoli-Arzela theorem we get a
subsequence still called $\tilde{u}_{k}$ converging to $\tilde{u}_{\infty}=(a_{\infty,}u_{\infty}):\mathbb{C}\to W$
in the $C_{loc}^{\infty}$ sense. Here $\tilde{u}_{\infty}$ is $J$-holomorphic
satisfying

\begin{equation}
\left|\nabla\tilde{u}_{\infty}(0)\right|=1,\label{eq:nonconstant2}
\end{equation}
\begin{equation}
\left\Vert \nabla\tilde{u}_{\infty}\right\Vert \leqq2,
\end{equation}
\begin{equation}
\intop_{B(0,\varepsilon_{k}R_{k})}\hspace{-1.5em}\tilde{u}_{k}^{*}\omega=\intop_{B(z_{k},\varepsilon_{k})}\hspace{-1em}\tilde{u}^{*}\omega\to0\;\mbox{ as \ensuremath{k}\ensuremath{\to}+\ensuremath{\infty}.}
\end{equation}
Thus, $E_{\omega}(\tilde{u}_{\infty})=\intop_{\mathbb{C}}\tilde{u}_{\infty}^{*}\omega=0$.
Moreover, given $R'>0$ and $\phi\in\mathcal{C}$ we have

\[
\intop_{B(0,R')}\hspace{-1em}\tilde{u}_{k}^{*}\left[\phi(r)\sigma\wedge\lambda\right]=\intop_{B(z_{k},R'/R_{k})}\hspace{-1em}\hspace{-1em}\tilde{u}^{*}\left[\phi(r)\sigma\wedge\lambda\right]\rightarrow0,
\]
as $k\to+\infty$. This means $\intop_{B(0,R')}\tilde{u}_{\infty}^{*}\left[\phi(r)\sigma\wedge\lambda\right]=0$,
so $E_{\lambda}(\tilde{u}_{\infty})=0$. Hence, $\tilde{u}_{\infty}$
is constant, contradicting (\ref{eq:nonconstant2}). \end{proof}
\begin{prop}
\label{pro:gradient bound for v holomorphic cylinder}Suppose $J$
is a cylindrical almost complex structure on $\mathbb{R}\times V$.
Let $\tilde{v}:\mathbb{R}^{+}\times S^{1}\to W$ be a $J-$holomorphic
map with respect to the standard complex structure on $\mathbb{R}^{+}\times S^{1}.$
Assume $E(\tilde{v})<+\infty$, then we have 
\[
\left\Vert \nabla\tilde{v}\right\Vert <+\infty,
\]
 where $\left\Vert \nabla\tilde{v}\right\Vert :=\underset{(s,t)\in\mathbb{R}^{+}\times S^{1}}{\sup}\left|\nabla\tilde{v}(s,t)\right|,$
and the norm $|\cdot|$ is computed with respect to the standard product
metric $ds^{2}+dt^{2}$ on $\mathbb{R}^{+}\times S^{1}$ and a translationally
invariant metric $g_{W}$ on $W,$ and $\nabla$ is the Levi-Civita
connection with respect to $g_{W}.$\end{prop}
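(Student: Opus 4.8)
The plan is to argue by contradiction with a bubbling-off argument, following closely the proof of Proposition \ref{pro:gradient bound for finite energy curve}; the only genuinely new point is a reduction to the case of a map defined on a disk in $\mathbb{C}$. Suppose $\left\Vert \nabla\tilde{v}\right\Vert =+\infty$. Since $\tilde{v}$ is smooth, its gradient is bounded on every compact subset of $\mathbb{R}^{+}\times S^{1}$, so there is a sequence $z_{k}=s_{k}+\sqrt{-1}t_{k}$ with $s_{k}\to+\infty$ and $R_{k}:=\left|\nabla\tilde{v}(z_{k})\right|\to+\infty$. Applying Lemma \ref{Lem: Hofer's point set topology} exactly as in Proposition \ref{pro:gradient bound for finite energy curve}, after modifying $z_{k}$ we may assume there are $\varepsilon_{k}>0$ with $\varepsilon_{k}\to0$, $\varepsilon_{k}R_{k}\to+\infty$, and $\left|\nabla\tilde{v}(z)\right|\leqq2R_{k}$ for $z$ in $B(z_{k},\varepsilon_{k})$. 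Because $\varepsilon_{k}\to0$ and $s_{k}\to+\infty$, for all large $k$ the disk $B(z_{k},\varepsilon_{k})$ is contained in $\mathbb{R}^{+}\times S^{1}$ and lies in a region on which the projection to $S^{1}=\mathbb{R}/\mathbb{Z}$ is injective; lifting to the universal cover $\mathbb{R}^{+}\times\mathbb{R}$ we may regard $\tilde{v}|_{B(z_{k},\varepsilon_{k})}$ as a $J$-holomorphic map from a disk in $\mathbb{C}$ into $W$, and from here the situation is literally that of the proof of Proposition \ref{pro:gradient bound for finite energy curve}.

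\textbf{Case 1: $\{a(z_{k})\}$ unbounded.} After passing to a subsequence, say $a(z_{k})\to+\infty$ (the case $a(z_{k})\to-\infty$ is handled identically using the asymptotic data at negative infinity). Set $\tilde{v}_{k}(z)=(a(z_{k}+z/R_{k})-a(z_{k}),\,u(z_{k}+z/R_{k}))$ on $B(0,\varepsilon_{k}R_{k})$. Lemma \ref{lem:Gromov-Schwarz} gives uniform $C^{n}_{loc}$ bounds, so after passing to a further subsequence $\tilde{v}_{k}\to\tilde{v}_{\infty}$ in $C^{\infty}_{loc}$, where $\tilde{v}_{\infty}:\mathbb{C}\to W$ is $J_{\infty}$-holomorphic (the defect term coming from $Tf_{-a(z_{k})}(J_{\infty}-J)$ tends to $0$ in $C^{0}_{loc}$ by (AC3)), with $\left|\nabla\tilde{v}_{\infty}(0)\right|=1$ and $\left\Vert \nabla\tilde{v}_{\infty}\right\Vert \leqq2$. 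The $\omega_{\infty}$-energy of the limit vanishes: on $B(z_{k},R'/R_{k})$ one has $\int\tilde{v}^{*}\omega\to0$ since $E_{\omega}(\tilde{v})<+\infty$ and the disks shrink, while $\int\tilde{v}^{*}(\omega-\omega_{\infty})\to0$ by (AC4) together with $\left|\nabla\tilde{v}\right|\leqq2R_{k}$, exactly as in the $\omega$-estimates of the cited proof. Likewise, for $\phi\in\mathcal{C}$ put $\phi_{k}(r):=\phi(r-a(z_{k}))\in\mathcal{C}$; using $E_{\lambda}(\tilde{v})<+\infty$, (AC3) and (AC4), the same chain of estimates as in (\ref{eq:7})--(\ref{eq:10}) gives $E_{\lambda_{\infty}}(\tilde{v}_{\infty})\leqq E_{\lambda}(\tilde{v})+1<+\infty$. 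Thus $\tilde{v}_{\infty}$ is a finite-energy $J_{\infty}$-holomorphic plane with $E_{\omega_{\infty}}(\tilde{v}_{\infty})=0$ and bounded gradient, so Proposition \ref{pro:Hofer's lemma} forces $\tilde{v}_{\infty}$ to be constant, contradicting $\left|\nabla\tilde{v}_{\infty}(0)\right|=1$.

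\textbf{Case 2: $\{a(z_{k})\}$ bounded.} Set instead $\tilde{v}_{k}(z)=\tilde{v}(z_{k}+z/R_{k})$; the same compactness argument produces a $J$-holomorphic plane $\tilde{v}_{\infty}:\mathbb{C}\to W$ with $\left|\nabla\tilde{v}_{\infty}(0)\right|=1$, $\left\Vert \nabla\tilde{v}_{\infty}\right\Vert \leqq2$, and, because the disks $B(z_{k},R'/R_{k})$ shrink, $E_{\omega}(\tilde{v}_{\infty})=0$ and $E_{\lambda}(\tilde{v}_{\infty})=0$. For a $J$-holomorphic map $\mathbb{C}\to W$ the identities (\ref{eq:w-energy}) and (\ref{eq:r-energy}) hold verbatim, so $E_{\omega}(\tilde{v}_{\infty})=0$ gives $\pi_{\xi}\circ T\tilde{v}_{\infty}=0$ and $E_{\lambda}(\tilde{v}_{\infty})=0$ gives $\sigma(\partial_{s}\tilde{v}_{\infty})=\lambda(\partial_{s}\tilde{v}_{\infty})=0$; hence $T\tilde{v}_{\infty}=0$ and $\tilde{v}_{\infty}$ is constant, again contradicting $\left|\nabla\tilde{v}_{\infty}(0)\right|=1$.

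The only place requiring any care beyond repeating the earlier argument is the first paragraph — checking that the rescaling domains genuinely sit inside a coordinate chart of the cylinder, so that the planar estimates apply — and, within Case 1, keeping the $\lambda$-energy under control in the absence of exactness, which is handled by the translated cutoffs $\phi_{k}$ exactly as in Proposition \ref{pro:gradient bound for finite energy curve}. I do not expect a serious obstacle here: the point of the proposition is simply that gradient blow-up is a local phenomenon, so passing from $\mathbb{C}$ to the end $\mathbb{R}^{+}\times S^{1}$ costs nothing.
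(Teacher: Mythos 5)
Your proposal is correct and follows exactly the route the paper intends: the paper's own proof of this proposition is the single sentence "The proof is almost the same as the proof of Proposition \ref{pro:gradient bound for finite energy curve}," and your write-up supplies precisely the missing details (the reduction of the shrinking disks $B(z_{k},\varepsilon_{k})$ to planar coordinate charts on the cylinder, followed by a verbatim repetition of the two-case rescaling argument). No discrepancy with the paper's approach.
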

\begin{proof}
The proof is almost the same as the proof of Proposition \ref{pro:gradient bound for finite energy curve}.\end{proof}
\begin{rem}
Actually, we can see that we can get a gradient bound with respect
to a metric $g_{D}$ on the domain and a translationally invariant
metric $g_{W}$ on $W$, as long as the injectivity radius of $g_{D}$
is bounded away from $0$. 
\end{rem}

\subsection{Subsequence convergence}
\begin{thm}
\label{thm:subsequence convergence to Reeb}Let $J$ be an asymptotically
cylindrical almost complex structure on $\mathbb{R}^{\pm}\times V$,
$\tilde{v}=(a,v):\mathbb{R}^{\pm}\times S^{1}\to\mathbb{R}^{\pm}\times V$
be a $J$-holomorphic curve with $E(\tilde{v})<+\infty.$ Suppose
that $\tilde{v}(\mathbb{R}^{\pm}\times S^{1})$ is unbounded. Then
for any sequence $k_{n}\to+\infty$, there exists a subsequence $k_{n_{i}}$,
such that $v(k_{n_{i}},\cdot)$ converges in $C^{\infty}(S^{1})$
to a map $S^{1}\to V$ given by $t\mapsto x(tT)$, where $x:\mathbb{R}\to V$
is a $|T|$-periodic solution of $\dot{x}=\mathbf{R}_{\infty}(x)$. \end{thm}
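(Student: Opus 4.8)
The plan is to follow the classical Hofer scheme for subsequence convergence, adapted to the asymptotically cylindrical setting where translations are no longer $J$-holomorphic. First I would establish a uniform gradient bound for $\tilde v$ on the cylindrical end: by Proposition \ref{pro:gradient bound for v holomorphic cylinder} we already have $\|\nabla \tilde v\| < +\infty$, and then by Lemma \ref{lem:Gromov-Schwarz} applied on balls of fixed radius (using that the injectivity radius of the flat metric on $\mathbb R^+\times S^1$ is bounded below) we obtain uniform $C^k$ bounds on $\tilde v$ restricted to $[1,\infty)\times S^1$ for every $k$. Next I would show that the $\xi$-part of the derivative decays: since $E_\omega(\tilde v) = \int \tilde v^*\omega < +\infty$ and, by (AC4), $\omega$ differs from the translationally invariant $\omega_\infty$ by an exponentially small error, the energy density $\omega(\pi_\xi \tilde v_s, J\pi_\xi \tilde v_s)$ is integrable, which together with the $C^1$ bound forces $\int_{\{R_k\}\times S^1}\omega(\pi_\xi v_t, J\pi_\xi v_t)\,dt \to 0$ along a subsequence of \emph{any} sequence $R_k\to\infty$ (otherwise the integral over disjoint annuli around the $R_k$ would diverge). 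Hence $\|\pi_\xi v_t(R_{k_n},\cdot)\|_{C^0(S^1)} \to 0$.

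Then I would extract the limit loop. By the uniform $C^\infty$ bounds, after passing to a further subsequence, $v(R_{k_n},\cdot)$ converges in $C^\infty(S^1)$ to some loop $x_0:S^1\to V$; I must identify it as a reparametrized periodic orbit of $\mathbf R_\infty$. The $J$-holomorphic equation gives $\tilde v_s + J(\tilde v)\tilde v_t = 0$; projecting onto $\xi$ and onto $\mathbb R\mathbf R \oplus \mathbb R\partial_r$, the $\xi$-component of $v_t$ tending to zero means that in the limit the $t$-derivative of $x_0$ lies in $\mathbb R\mathbf R_\infty$. Here is where asymptotic cylindricity enters: I would replace $J$ by $J_\infty$ at the expense of an error controlled by $C_l e^{-\delta_l r}$ via (AC3), and since $a(R_{k_n},\cdot)\to\infty$ this error vanishes, so the limit equation is the $J_\infty$-holomorphic equation evaluated on the loop. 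This yields $\dot x_0(t) = c\,\mathbf R_\infty(x_0(t))$ for a constant $c$ (the constancy coming from $i(\mathbf R_\infty)d\lambda_\infty = 0$, i.e. (AC6), which makes $\lambda_\infty$-period a homotopy invariant along the flow, combined with $\lambda$-energy finiteness), so $x_0(t) = x(tT)$ with $x$ a $|T|$-periodic orbit and $T = c$. Finally I would rule out $T = 0$: if $T=0$ the limit loop is constant, and then a standard argument using finiteness of $E_\lambda(\tilde v)$ together with the mean value inequality and the monotonicity/isoperimetric-type bound on small-area pieces shows the end is in fact bounded in $\mathbb R\times V$, contradicting the hypothesis that $\tilde v(\mathbb R^+\times S^1)$ is unbounded.

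I expect the main obstacle to be handling the non-exact, non-invariant nature of the forms in the asymptotically cylindrical case. Concretely: showing that the $\lambda$-energy controls $\int_{\{s\}\times S^1} v^*\lambda$ well enough to pin down a \emph{nonzero} asymptotic period $T$, and to force constancy of $c$ in the limit, is delicate because $E_\lambda$ is defined as a supremum over $\phi\in\mathcal C$ rather than as a single exact integral, and because $\sigma\wedge\lambda$ is only asymptotically equal to $dr\wedge\lambda_\infty$. One must carefully compare $\int \tilde v^*(\phi(r)\sigma\wedge\lambda)$ with $\int \tilde v^*(\phi(r)dr\wedge\lambda_\infty)$ using (AC3)--(AC4), absorb exponentially small errors (using $a\to\infty$ on the relevant region), and only then apply a Stokes/monotonicity argument on the invariant model to conclude. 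The bound $\int_{\{s\}\times S^1}|\pi_\xi v_t|^2 \to 0$ along the subsequence, and the corresponding statement that the action $\int_{\{s\}\times S^1} v^*\lambda$ converges to $|T|\neq 0$, is the technical heart; everything else (bubbling analysis, $C^\infty$ compactness, identification of the ODE) is routine once these estimates are in place.
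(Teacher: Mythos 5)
Your overall scheme---gradient bound via Proposition \ref{pro:gradient bound for v holomorphic cylinder}, $C^{\infty}$ bounds from Lemma \ref{lem:Gromov-Schwarz}, decay of the $\omega$-energy density along a subsequence of circles, extraction of a limit loop, and a monotonicity argument to exclude $T=0$---is essentially the paper's, and you correctly locate the difficulty in the non-exactness of the energy. But two steps are genuinely missing. First, you assert $a(R_{k_n},\cdot)\to\infty$ in order to invoke (AC3)--(AC4) and pass to $J_{\infty}$; the hypothesis only says the \emph{image} of $\tilde{v}$ is unbounded. One must first prove that $a(s,t)\to+\infty$ (or $-\infty$) uniformly in $t$ as $s\to\infty$, and this already requires a monotonicity argument: if $a$ were unbounded above yet bounded above by some $Q$ on infinitely many circles, the curve would cross the compact slab $[Q,Q+2C]\times V$ infinitely often (with $C$ the gradient bound), each crossing costing a definite amount of the taming energy $\omega+\phi(r)\sigma\wedge\lambda$, contradicting $E(\tilde{v})<+\infty$. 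Without this, your use of (AC3)--(AC4) on the circles $\{R_{k_n}\}\times S^{1}$, and indeed the whole identification of the limit loop with an orbit of $\mathbf{R}_{\infty}$ rather than of $\mathbf{R}$, is unjustified.

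Second, and more seriously, the exclusion of $T=0$ is precisely the step that is \emph{not} standard in the asymptotically cylindrical setting, and your sketch does not contain the argument. The cylindrical proof integrates an exact form and applies Stokes; here the errors are \emph{not} all exponentially small. After writing $\phi_m(r)\,dr\wedge\lambda=d(\psi_m(r)\lambda)-\psi_m(r)\,d\lambda$, the term $\psi_m(r)\,d\lambda$ does not decay at all: it must be dominated by $c\,\omega+c_m\,\sigma\wedge\lambda$ with $c$ uniform and $c_m\to0$ (this is where (AC6) and $i(\frac{\partial}{\partial r})d\lambda_{\infty}=0$ enter), and the $c_m\,\sigma\wedge\lambda$ contribution then has to be absorbed into the left-hand side using that $\phi_m$ is constant and positive on the image of the cylinder $[k_{n_m},k_{n_{m+1}}]\times S^{1}$. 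Moreover the cutoffs $\psi_m$ must be re-anchored at heights $a_m$ chosen so that consecutive circles are separated by at least $4C$ in the $r$-direction, so that the Monotonicity Lemma---with constants uniform in $m$, which again uses the asymptotic conditions---applies to the slabs $[a_m+C,a_m+3C]\times V$ that the curve must traverse while the energy $\int\tilde{v}^{*}(\omega+\phi_m(r)\,dr\wedge\lambda)$ over the corresponding cylinders tends to $0$. Your appeal to ``a standard argument \dots\ shows the end is in fact bounded'' hides all of this; as written the step would fail because the exact-form Stokes argument does not transfer, and the comparison of $\sigma\wedge\lambda$ with $dr\wedge\lambda_{\infty}$ alone does not control $\psi_m\,d\lambda$.
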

\begin{proof}
We prove this theorem for the case $\mathbb{R}^{+}\times V$. The
proof for $\mathbb{R}^{-}\times V$ case can be carried out similarly,
and hence is omitted. By Proposition \ref{pro:gradient bound for v holomorphic cylinder}
we have $\left\Vert \nabla\tilde{v}\right\Vert \leqq C$ for some
$C>0$. Since $\tilde{v}(\mathbb{R}^{+}\times S^{1})$ is not bounded,
there exist a sequence of points $(s_{k},t_{k})\in\mathbb{R}^{+}\times S^{1}$,
such that $\left|a(s_{k},t_{k})\right|\to+\infty$. Now there are
two cases.

\textbf{Case 1}: $a(s_{k},t_{k})\to+\infty$.

Suppose that there exist a sequence of points $(s_{k}^{'},t_{k}^{'})\in\mathbb{R}^{+}\times S^{1}$,
such that $a(s_{k}^{'},t_{k}^{'})<Q$ for some constant $Q$. Pick
a subsequence of $(s_{k},t_{k})$, still called $(s_{k},t_{k}),$
and a subsequence of $(s_{k}^{'},t_{k}^{'}),$ still called $(s_{k}^{'},t_{k}^{'}),$
so that they satisfy $s_{k}^{'}<s_{k}<s_{k+1}^{'}$ for all $k$.
This is possible because $s_{k}\to+\infty$. Since $\left\Vert \nabla\tilde{v}\right\Vert \leqq C$,
we have $a(s_{k}^{'},t)<Q+C$ for $t\in S^{1}$. Consider the compact
manifold $N:=[Q,Q+2C]\times M\subset W=\mathbb{R}^{+}\times V$. Pick
$\phi\in\mathcal{C}$, such that $\phi|_{[Q,Q+2C]}>0$. By Gromov's
Monotonicity (see for example Theorem 1.3 in \cite{Hummel}), there
exists $\iota>0$ such that $\intop_{\tilde{v}([s_{k}^{'},s_{k}]\times S^{1})}\omega+\phi(r)\sigma\wedge\lambda\geqq\iota>0$
for all $k$. This contradicts the fact that $E(\tilde{v})<+\infty$.
Thus $a(s,t)\to+\infty$ uniformly in $t$ as $s\to+\infty$.

Define 
\[
\tilde{v}_{n}(s,t)=(a(s+k_{n},t)-a(k_{n},0),v(s+k_{n},t)).
\]
 Then the sequence $\tilde{v}_{n}(0,0)=(0,v(k_{n},0))$ is bounded.
Since $\tilde{v}$ is $J$-holomorphic, by Lemma \ref{lem:Gromov-Schwarz}
and Ascoli-Arzela Theorem, there exists a subsequence still called
$\tilde{v}_{n}$ converging to $\tilde{v}_{\infty}=(b,v_{\infty}):\mathbb{R}\times S^{1}\to W$
in $C_{loc}^{\infty}$. We know $\tilde{v}_{\infty}$ is $J_{\infty}$-holomorphic.
Define the translation map $\tau_{n}:\mathbb{R}\times S^{1}\to\mathbb{R}\times S^{1}$
by $\tau_{n}(s,t)=(s+k_{n},t)$. Observe

\begin{align}
\intop_{[-R,R]\times S^{1}}\hspace{-1.5em}\tilde{v}_{n}^{*}\omega_{\infty} & =\intop_{[-R+k_{n},R+k_{n}]\times S^{1}}\hspace{-2em}\tilde{v}^{*}\omega+\intop_{[-R+k_{n},R+k_{n}]\times S^{1}}\hspace{-2em}\tilde{v}^{*}(\omega_{\infty}-\omega).\label{eq:w+ w8-w}
\end{align}
For the first term on the right hand side we have 
\begin{equation}
\intop_{[-R+k_{n},R+k_{n}]\times S^{1}}\hspace{-2em}\tilde{v}^{*}\omega\to0\label{eq: integral v*w converge to 0}
\end{equation}
 as $n\to\infty$, since $E_{\omega}(\tilde{v})$ is finite. The second
term satisfies

\begin{align}
\intop_{[-R+k_{n},R+k_{n}]\times S^{1}}\hspace{-2em}\tilde{v}^{*}(\omega_{\infty}-\omega) & \leqq\intop_{[-R+k_{n},R+k_{n}]\times S^{1}}\hspace{-2em}|(\omega_{\infty}-\omega)(\tilde{v}_{s},\tilde{v}_{t})|ds\wedge dt\to0\label{eq:w-w8}
\end{align}
by (AC4) as $n\to+\infty$. 

Combining (\ref{eq:w+ w8-w}), (\ref{eq: integral v*w converge to 0})
and (\ref{eq:w-w8}), we can see $\intop_{[-R,R]\times S^{1}}\tilde{v}_{\infty}^{*}\omega_{\infty}=0$
and hence $E_{\omega_{\infty}}(\tilde{v}_{\infty})=0$, so there exists
a smooth map $f:\mathbb{R}^{2}\to\mathbb{R}$ such that $\tilde{v}_{\infty}=(b,x\circ f)$,
where $x:\mathbb{R}\to V$ is the solution of $\dot{x}=\mathbf{R}_{\infty}(x)$.
Let $\Phi$ be the holomorphic function defined by $\Phi=b+if$. Since
$\left\Vert \nabla\Phi\right\Vert \leqq C$, $\Phi$ is linear. Thus,
$\Phi(s,t)=\alpha(s+it)+\beta$, where $\alpha=T+il,\beta=m+in\in\mathbb{C}$
are constants. But $b(s,t)-b(s,t+1)=0$ implies $l=0$, and $b(0,0)=0$
implies $m=0.$ Thus, 
\begin{equation}
f=Tt+n,\label{eq:f}
\end{equation}
\begin{equation}
b=Ts.
\end{equation}
Therefore, $a_{s}(k_{n},t)\to T$ uniformly in $t$ as $n\to+\infty$
(Recall the notation $\tilde{v}=(a,v)$, $\tilde{v}_{\infty}=(b,v_{\infty})$).
Moreover, we have 

\begin{equation}
\intop_{\{0\}\times S^{1}}\hspace{-1em}\tilde{v}_{\infty}^{*}\lambda_{\infty}=\intop_{\{0\}\times S^{1}}\hspace{-1em}\lambda_{\infty}[(\tilde{v}_{\infty})_{t}]dt=\intop_{\{0\}\times S^{1}}\hspace{-1em}b_{s}dt=T.\label{eq:l}
\end{equation}

\textbf{Claim}: $T\neq0$. 

It follows from the claim and (\ref{eq:f}) that $\tilde{v}_{\infty}$
is not constant. Indeed, by (\ref{eq:f}), $f(s,t+1)=T(t+1)+n$, so
$x(T(t+1)+n)=x(Tt+n).$ Hence, $x$ is $T$-periodic.

\textsl{Proof of Claim}. Suppose $T=0$. Since $a(s,t)\to+\infty$
uniformly in $t$ as $s\to+\infty$, we can pick a subsequence $k_{n_{m}}$
of $k_{n}$ and a sequence $t_{m}\in S^{1}$, such that $a(k_{n_{m+1}},t_{m+1})-a(k_{n_{m}},t_{m})\geqq4C.$
Denote $a(k_{n_{m}},t_{m})$ by $a_{m}$. From $\left\Vert \nabla\tilde{u}\right\Vert \leqq C$
we get 
\begin{equation}
a(k_{n_{m}},t)\in[a_{m}-C,a_{m}+C],\label{eq:a_k}
\end{equation}
\begin{equation}
a(k_{n_{m+1}},t)\geqq a_{m}+3C.\label{eq:a_k+1}
\end{equation}
Let $\psi_{m}:\mathbb{R}\to[0,1]$ be a smooth map, satisfying $\psi_{m}(r)=\frac{1}{7C}(r-a_{m}+\frac{3}{2}C)$
for $r\in[a_{m}-C,a_{m}+5C]$, and $\phi_{m}=\psi_{m}^{'}\in\mathcal{C}$.
We can further require $C>1$, then $\phi_{m}(r)\leqq\frac{1}{7C}<1$.
Observe

\[
\intop_{[k_{n_{m}},k_{n_{m+1}}]\times S^{1}}\hspace{-1.5em}\tilde{v}^{*}d(\psi_{m}(r)\lambda)=\intop_{\{k_{n_{m+1}}\}\times S^{1}}\hspace{-1.5em}\tilde{v}^{*}(\psi_{m}(r)\lambda)-\intop_{\{k_{n_{m}}\}\times S^{1}}\hspace{-1.5em}\tilde{v}^{*}(\psi_{m}(r)\lambda).
\]
While,

\begin{align*}
\left|\intop_{\{k_{n_{m+1}}\}\times S^{1}}\hspace{-1.5em}\tilde{v}^{*}(\psi_{m}(r)\lambda)\right| & =\left|\intop_{\{k_{n_{m+1}}\}\times S^{1}}\hspace{-1.5em}\psi_{m}(\tilde{v})\lambda(\tilde{v}_{t})dt\right|\leqq\intop_{\{k_{n_{m+1}}\}\times S^{1}}\hspace{-1.5em}|\lambda(\tilde{v}_{t})|dt\to T=0,
\end{align*}
as $m\to+\infty.$ Similarly, $\intop_{\{k_{n_{m}}\}\times S^{1}}\tilde{v}^{*}(\psi_{m}(r)\lambda)\to0$.
Thus, by Stokes' theorem 
\begin{equation}
\intop_{[k_{n_{m}},k_{n_{m+1}}]\times S^{1}}\hspace{-2em}\tilde{v}^{*}d(\psi_{m}(r)\lambda)\to0.\label{eq:cusp area}
\end{equation}
Observe 
\begin{align}
 & \intop_{[k_{n_{m}},k_{n_{m+1}}]\times S^{1}}\hspace{-2em}\tilde{v}^{*}(\phi_{m}(r)\sigma\wedge\lambda)\nonumber \\
= & \intop_{[k_{n_{m}},k_{n_{m+1}}]\times S^{1}}\hspace{-2em}\tilde{v}^{*}(\phi_{m}(r)dr\wedge\lambda)+\intop_{[k_{n_{m}},k_{n_{m+1}}]\times S^{1}}\hspace{-2em}\tilde{v}^{*}\left[\phi_{m}(r)\left(\sigma-dr\right)\wedge\lambda\right]\label{eq: sigma wedge lambda}
\end{align}
While, for the first term on the right hand side, we have 
\begin{align}
 & \left|\intop_{[k_{n_{m}},k_{n_{m+1}}]\times S^{1}}\hspace{-2em}\tilde{v}^{*}(\phi_{m}(r)dr\wedge\lambda)\right|\nonumber \\
\leqq & \left|\intop_{[k_{n_{m}},k_{n_{m+1}}]\times S^{1}}\hspace{-2em}\tilde{v}^{*}d(\psi_{m}(r)\lambda)\right|+\intop_{[k_{n_{m}},k_{n_{m+1}}]\times S^{1}}\hspace{-2em}|\tilde{v}^{*}(\psi_{m}(r)d\lambda)|\nonumber \\
\leqq & \left|\intop_{[k_{n_{m}},k_{n_{m+1}}]\times S^{1}}\hspace{-2em}\tilde{v}^{*}d(\psi_{m}(r)\lambda)\right|+\intop_{[k_{n_{m}},k_{n_{m+1}}]\times S^{1}}\hspace{-2em}\tilde{v}^{*}\left(c\omega+c_{m}\sigma\wedge\lambda\right),\label{eq:phi_k}
\end{align}
for some $c>0,$ $c_{m}>0$. The second inequality is due to the fact
that $c\omega+c_{m}\sigma\wedge\lambda$ is positive on all $J$-complex
planes; also since $d\lambda\to d\lambda_{\infty}$ and $i(\frac{\partial}{\partial r})d\lambda_{\infty}=0=i(\mathbf{R}_{\infty})d\lambda_{\infty}$,
we can require that $c$ is independent of $m$ and $c_{m}$ goes
to $0$ as $m\to+\infty$. Similarly, we have

\begin{equation}
\left|\intop_{[k_{n_{m}},k_{n_{m+1}}]\times S^{1}}\hspace{-1.5em}\tilde{v}^{*}\left[\phi_{m}(r)(\sigma-dr)\wedge\lambda\right]\right|\leqq\intop_{[k_{n_{m}},k_{n_{m+1}}]\times S^{1}}\hspace{-1.5em}\tilde{v}^{*}\left[c\omega+c_{m}\sigma\wedge\lambda\right].\label{eq:sigma-dr}
\end{equation}
When $k$ is large, from (\ref{eq: sigma wedge lambda}), (\ref{eq:phi_k})
and (\ref{eq:sigma-dr}) we get

\begin{align}
\nonumber \\
\hspace{-1.5em}\intop_{[k_{n_{m}},k_{n_{m+1}}]\times S^{1}}\hspace{-1.5em}\tilde{v}^{*}(\phi_{m}(r)\sigma\wedge\lambda)\leqq & D\left\{ \left|\intop_{[k_{n_{m}},k_{n_{m+1}}]\times S^{1}}\hspace{-1.5em}\tilde{v}^{*}d(\psi_{m}(r)\lambda)\right|+\intop_{[k_{n_{m}},k_{n_{m+1}}]\times S^{1}}\hspace{-1.5em}\tilde{v}^{*}\omega\right\} ,\label{eq:phi_k<D}
\end{align}
 for some constant $D>0$ which does not depend on $m$ and $\tilde{v}$.
The reason that the term $\intop_{[k_{n_{m}},k_{n_{m+1}}]\times S^{1}}\tilde{v}^{*}\left(c_{m}\sigma\wedge\lambda\right)$
does not show up on the right hand side of (\ref{eq:phi_k<D}) is
because that it is absorbed by the left hand side since $\phi_{m}|_{[k_{n_{m}},k_{n_{m+1}}]\times S^{1}}=1/7$.
Since $E_{\omega}(\tilde{v})$ is finite, $\intop_{[k_{n_{m}},k_{n_{m+1}}]\times S^{1}}\tilde{v}^{*}\omega$
goes to $0$. Together with (\ref{eq:cusp area}), we get 
\[
\intop_{[k_{n_{m}},k_{n_{m+1}}]\times S^{1}}\tilde{v}^{*}(\phi_{m}(r)dr\wedge\lambda)\to0.
\]
Summing up, 
\begin{equation}
\intop_{[k_{n_{m}},k_{n_{m+1}}]\times S^{1}}\hspace{-1.5em}\tilde{v}^{*}(\omega+\phi_{m}(r)dr\wedge\lambda)\to0\label{eq:cusp area 2}
\end{equation}
 as $m\to+\infty$.

Now consider $N_{m}=[a_{m}+C,a_{m}+3C]\times V\subseteq W$ with an
almost complex structure $J_{m}:=J|_{N_{m}}$ and a non-degenerate
2-form $\Omega_{m}:=\omega+\phi_{m}(r)\sigma\wedge\lambda|_{N_{m}}$.
Because of the asymptotic condition, we can find uniform constants
$C_{0}$, $r_{0}>0$ such that by the Gromov's Monotonicity any $J_{m}$-holomorphic
curve $h_{m}:(S,j)\to(N_{m},J_{m})$ where $(S,j)$ is a Riemann surface
with boundary, and if the boundary $h_{m}(\partial S)$ is contained
in the complement of the ball $B(h_{m}(s_{0}),r)$ where $s_{0}\in\mbox{Int}S_{m}$
and $r<r_{0}$, then we have $\intop_{h_{m}(S)\cap B(h_{m}(s_{0}),r))}\Omega_{m}\geqq C_{0}r^{2}$.
By (\ref{eq:a_k}) and (\ref{eq:a_k+1}) we can see $\tilde{u}(k_{n_{m}},S^{1})\cap\mbox{Int}N_{m}=\emptyset$
and $\tilde{u}(k_{n_{m+1}},S^{1})\cap\mbox{Int}N_{m}=\emptyset$.
This contradicts  (\ref{eq:cusp area 2}). Thus, $T\neq0$.

\textbf{Case 2.} $a(s_{k},t_{k})\to-\infty$.

We deal with it similarly. \end{proof}
\begin{cor}
\label{coro:a(s,t)-Ts  and zout converges to 0.}Under the assumption
of Theorem \ref{thm:subsequence convergence to Reeb}, there exists
a number $T>0$ such that as $s\to\pm\infty,$
\begin{equation}
\partial^{\beta}[a(s,t)-Ts]\to0\label{eq:cat}
\end{equation}
 uniformly in t, provided $\beta=(\beta_{1},\beta_{2})\in\mathbb{Z}_{\geqq0}\times\mathbb{Z}_{\geqq0}$
and $|\beta|=\beta_{1}+\beta_{2}\geqq1$.\end{cor}
\begin{proof}
By Theorem \ref{thm:subsequence convergence to Reeb}, there exist
a number $T>0$ and a sequence of numbers $s_{k}^{'}$ such that $s_{k}^{'}\to+\infty$
and $v(s_{k}^{'},\cdot)\to x(T\cdot),$ for some $T$-periodic orbit
$x$ of $\mathbf{R}_{\infty}.$ Suppose \eqref{eq:cat} is not true
for this $T$, then there exists a sequence of points $(s_{k},t_{k})$
such that $s_{k}\to+\infty$ and $\partial^{\beta}[a(s,t)-Ts]|_{(s_{k},t_{k})}\to c$
as $k\to+\infty$ for some $|\beta|\geqq1$, where $c$ is a non-zero
constant ($\pm\infty$ included). Define $\bar{a}_{k}(s,t):=a(s+s_{k},t+t_{k})-a(s_{k},t_{k})$,
and then $\bar{a}_{k}(0,0)=0$. From the proof of Theorem \ref{thm:subsequence convergence to Reeb}
we get a subsequence of $k$, still called $k$, and a $T'$-periodic
orbit $x'$ of $\mathbf{R}_{\infty},$ such that $\bar{a}_{k}\to T's$
in $C_{loc}^{\infty}(\mathbb{R}^{+}\times S^{1},\mathbb{R}).$ By
a straight forward modification of the proof of Proposition 2.1 in
\cite{Finite energy plane} to the Morse-Bott case, we can show that
$x'$ and $x$ lie in the same component of $N_{T}$ (see Definition
\ref{def:Morse Bott}) and in particular $T'=T$. Thus, 
\begin{align*}
\partial^{\beta}[a(s,t)-Ts]|_{(s_{k},t_{k})} & =\partial^{\beta}[a(s+s_{k},t+t_{k})-a(s_{k},t_{k})-Ts]|_{(0,0)}\\
 & =\partial^{\beta}(\bar{a}_{k}(s,t)-Ts)|_{(0,0)}\\
 & \to0,
\end{align*}
 which contradicts the assumption.
\end{proof}
To prove Theorem \ref{thm:converge to reeb orbit} and Theorem \ref{thm: exponential convergence},
we need to get the exponential decay estimates.

\subsection{\label{sub:Exponential-decay}Exponential decay estimates}

In this subsection, we will follow the schemes in \cite{morse bott}
to prove Theorem \ref{thm:converge to reeb orbit} and Theorem \ref{thm: exponential convergence}.
The strategy is as follows: Firstly, we pick a neighborhood $U$ of
the orbit $\gamma$, restrict the $J$-holomorphic curve $\tilde{u}$
to a sequence of cylinders inside the domain so that the images lie
in the neighborhood and satisfy certain inequalities, and estimate
the behaviors of each finite cylinder by the behaviors of boundaries
of the cylinder. Secondly, since we have a sequence of circles in
the domain whose images lie in $U,$ we get the cylinders bounded
by the circles also lie in $U$ based on the estimates. We also show
that near the end of the domain the $\tilde{u}$ satisfies the inequalities.
Once these are achieved, Theorem \ref{thm:converge to reeb orbit}
and Theorem \ref{thm: exponential convergence} follow easily.

In order to study the $J$-holomorphic curve equation around $\gamma$,
we need introduce a good coordinate chart around a neighborhood of
$\gamma.$
\begin{lem}
\label{lem:choosing nice coordinate}\cite{compactness} Suppose that
$J_{\infty}$ is a cylindrical almost complex structure of the Morse-Bott
type on $\mathbb{R}^{+}\times V$ at $\infty$. Let $N$ be a component
of the set $N_{T}\subset V$ (see Definition \ref{def:Morse Bott}),
and $\gamma$ be one of the orbits from $N$.

a) if $T$ is the minimal period of $\gamma$ then there exists a
neighborhood $U\supset\gamma$ in $V$ such that $U\cap N$ is invariant
under the flow of $\mathbf{R}_{\infty}$ and one finds coordinates
$(\vartheta,x_{1},...,x_{n},y_{1},...,y_{n})$ of $U$ such that 
\[
N=\{x_{1},...,x_{p}=0,y_{1},...,y_{q}=0\},
\]
for $0\leqq p,q\leqq n,$ 
\[
\mathbf{R}_{\infty}|_{N}=\frac{\partial}{\partial\vartheta},
\]
and
\[
\omega_{\infty}|_{N}=\omega_{0}|_{N},
\]
 where $\omega_{0}=\sum_{i=1}^{n}dx_{i}\wedge dy_{i}$.

b) if $\gamma$ is a $m$-multiple of a trajectory $\bar{\gamma}$
of a minimal period $\frac{T}{m}$ there exists a tubular neighborhood
$\bar{U}$ of $\bar{\gamma}$ such that its $m$-multiple cover $U$
together with all the structures induced by the covering map from
$U\to\bar{U}$ from the corresponding objects on $\bar{U}$ satisfy
the properties of the part a).\end{lem}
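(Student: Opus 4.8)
The plan is to reduce everything to the corresponding statement for cylindrical almost complex structures, which is the content of Lemma 5.3 (the Morse--Bott coordinate lemma) in \cite{compactness}, applied to the \emph{translationally invariant} pair $(J_\infty,\omega_\infty)$. Indeed, by (AC5)--(AC7) the pair $(\omega_\infty,J_\infty)$ is a bona fide cylindrical structure on $W=\mathbb{R}\times V$ whose Reeb-type vector field $\mathbf R_\infty$ is, by (AC7), tangent to the level sets and hence a vector field on $V$; moreover we are assuming throughout that $J$ is of Morse--Bott type, which by definition is a statement purely about $\mathbf R_\infty$ and $\omega_\infty$. So the hypotheses of the cylindrical version of the lemma are satisfied, and I would simply invoke it. The only thing that needs checking is that the coordinate normalizations $\mathbf R_\infty|_N=\partial/\partial\vartheta$, $N=\{x_1,\dots,x_p=y_1,\dots,y_q=0\}$, and $\omega|_N=\omega_0|_N$ are all statements about the \emph{limiting} data, so that the presence of the exponentially small perturbation $J-J_\infty$, $\omega-\omega_\infty$ is irrelevant to the construction of the chart.

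In more detail: for part a), assume $T$ is the minimal period of $\gamma$. First I would use the flow $\phi^t$ of $\mathbf R_\infty$ on $V$ to build an ``angular'' coordinate $\vartheta\in\mathbb{R}/T\mathbb{Z}$ along $\gamma$ and, via a $\phi^t$-equivariant choice of a small transverse disk $D$ at $\gamma(0)$, straighten $\mathbf R_\infty$ to $\partial/\partial\vartheta$ on a tubular neighborhood $U$ of $\gamma$; minimality of $T$ guarantees this is an embedding rather than just an immersion. Next, because $J$ is Morse--Bott, $N_T$ is a smooth submanifold with $T_pN_T=\ker(d\phi^T-\mathrm{Id})_p$; since the chart was built $\phi^t$-equivariantly, $U\cap N_T$ is flow-invariant, and after a linear change of the transverse coordinates $(x_1,\dots,x_n,y_1,\dots,y_n)$ on $D$ we may arrange $N=\{x_1=\dots=x_p=0,\ y_1=\dots=y_q=0\}$ where $p+q=\operatorname{codim} N_T$. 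Finally, for the symplectic normalization, I would restrict $\omega_\infty$ (equivalently $\omega|_N$, since the two agree on $N$ to infinite order — indeed $\omega=\omega_\infty$ on the level sets in the limit, and on $N$ the perturbation term has no effect on the pulled-back form at points of $N$ because... actually one uses that $\omega|_{N_T}$ has locally constant rank and then a Moser/Darboux-type argument) to $N$ and apply a relative Darboux theorem in the transverse directions to put $\omega|_N$ into the standard form $\omega_0|_N=\sum_{i=1}^n dx_i\wedge dy_i$; the Morse--Bott rank condition is exactly what makes this normal form available. Part b) is then formal: if $\gamma$ is the $m$-fold cover of a minimal orbit $x$ of period $T/m$, apply part a) to $x$ to get a chart on a tubular neighborhood $\tilde U$ of $x$, pull everything back under the $m$-fold covering $U\to\tilde U$, and note that all the asserted structures are covering-equivariant.

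The main obstacle — and the only place where ``asymptotically cylindrical'' versus ``cylindrical'' could conceivably matter — is ensuring that the coordinate chart really can be taken to depend only on $J_\infty$ and $\omega_\infty$. This is clean: $\mathbf R_\infty$, $\phi^t$, $\omega_\infty$, and the submanifolds $N_T$ are all built from the translationally invariant limiting data, so the construction above never sees the perturbation. I would therefore phrase the proof as: ``Apply Lemma 5.3 of \cite{compactness} to the cylindrical almost complex manifold $(W,J_\infty)$ with form $\omega_\infty$; by (AC5)--(AC7) and the standing Morse--Bott assumption its hypotheses hold, and the resulting chart has all the required properties since the relations $\mathbf R_\infty|_N=\partial/\partial\vartheta$ and $\omega|_N=\omega_0|_N$ only involve the restriction of the limiting structures to $N$.'' The $m$-fold cover statement in b) is handled verbatim as in \cite{compactness}. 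So in fact there is essentially nothing new to prove here beyond this observation; the lemma is cited precisely so that the coordinates are in place for the exponential-decay analysis in Section \ref{sub:Exponential-decay}.
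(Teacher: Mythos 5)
Your proposal is correct and matches the paper's treatment: the paper's entire proof is the citation ``Refer to Lemma A.1 in \cite{compactness},'' i.e.\ the Morse--Bott coordinate lemma for the cylindrical case applied to the limiting data, exactly as you propose. Your added observation that the chart depends only on the translationally invariant objects $\mathbf{R}_{\infty}$, $\phi^{t}$, $\omega_{\infty}$, $N_{T}$ (so the exponentially decaying perturbation is irrelevant) is the correct justification for why the citation suffices in the asymptotically cylindrical setting.
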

\begin{proof}
Refer to Lemma A.1 in \cite{compactness}.
\end{proof}

Using this coordinate chart, we can work locally in $U\subset(\mathbb{R}/T\mathbb{Z})\times\mathbb{R}^{2n}$,
and make $T$ the minimal period of $\gamma$. Denote by $z_{in}$
the coordinate $(x_{1},...,x_{p},y_{1},...,y_{q})$ and by $z_{out}$
the coordinate $(x_{n-p+1},...,x_{n},y_{n-p+1},...,y_{n})$. We can
easily see following lemma about behavior of a $J$-holomorphic curve
in the $z_{out}$ direction.

\begin{lem}
Let $J$ be an asymptotically cylindrical almost complex structure
on $W=\mathbb{R}^{+}\times V$, and $\tilde{u}$ be a finite Hofer
energy $J$-holomorphic curve from $\mathbb{R}^{+}\times S^{1}$ to
$W$. Suppose $[m_{k},n_{k}]$ is a sequence of intervals in $\mathbb{R}^{+}$
with $m_{k}\to+\infty$ and $\tilde{u}([m_{k},n_{k}]\times S^{1})\subset U$,
then we have as $k\to+\infty,$ 
\[
\underset{(s,t)\in[m_{k},n_{k}]\times S^{1}}{\sup}\left|\partial^{\beta}z_{out}(s,t)\right|\to0
\]
 for all $\beta\in\mathbb{Z}_{\geqq0}\times\mathbb{Z}_{\geqq0}$.
\end{lem}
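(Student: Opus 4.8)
The plan is to run the same rescaling-and-rigidity argument that proves Theorem~\ref{thm:subsequence convergence to Reeb}, now localized near $\gamma$, and to argue by contradiction. By Proposition~\ref{pro:gradient bound for v holomorphic cylinder} and Lemma~\ref{lem:Gromov-Schwarz} the map $\tilde u$ has uniformly bounded $C^{k}$-norm on $[1,+\infty)\times S^{1}$ for every $k$; in the coordinates of Lemma~\ref{lem:choosing nice coordinate} this makes every $\partial^{\beta}z_{out}$ uniformly bounded on $C_{k}:=[m_{k},n_{k}]\times S^{1}$. If the conclusion failed for some $\beta$, then after passing to a subsequence there would be points $(s_{k},t_{k})\in C_{k}$ with $|\partial^{\beta}z_{out}(s_{k},t_{k})|\geqq\varepsilon$ for a fixed $\varepsilon>0$, and necessarily $s_{k}\to+\infty$. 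I would set $\tilde u_{k}(s,t):=(a(s+s_{k},t)-a(s_{k},0),\,u(s+s_{k},t))$ and, using the $C^{k}$-bounds, the Arzel\`a--Ascoli theorem and (AC3), pass to a further subsequence converging in $C^{\infty}_{loc}$ to a $J_{\infty}$-holomorphic map $\tilde u_{\infty}=(b,u_{\infty})$ on $\mathbb{R}\times S^{1}$ (or, if the distance from $s_{k}$ to the endpoints of $[m_{k},n_{k}]$ stays bounded, on a finite cylinder or a loop around $\{0\}\times S^{1}$ — this does not affect the argument).

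Because $E(\tilde u)<+\infty$, the $\omega$-energy of $\tilde u$ over $[s_{k}-R,s_{k}+R]\times S^{1}$ tends to $0$ for each fixed $R$, and by (AC4) the $\omega-\omega_{\infty}$ term vanishes in the limit, so $E_{\omega_{\infty}}(\tilde u_{\infty})=0$. As in Proposition~\ref{pro:Hofer's lemma} and in the proof of Theorem~\ref{thm:subsequence convergence to Reeb}, this forces $\pi_{\xi_{\infty}}(u_{\infty})_{s}=\pi_{\xi_{\infty}}(u_{\infty})_{t}=0$, whence $u_{\infty}(s,t)=x(f(s,t))$ with $\dot x=\mathbf{R}_{\infty}(x)$, and $\Phi:=b+if$ is holomorphic; the gradient bound makes $\Phi$ affine, and by Theorem~\ref{thm:subsequence convergence to Reeb} together with the Morse--Bott condition its slope is the fixed period $T\neq0$, so $x$ is a $|T|$-periodic orbit, i.e.\ $x\subset N=N_{|T|}$. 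Since $\tilde u(C_{k})\subset U$ and $\tilde u_{k}\to\tilde u_{\infty}$ in $C^{0}_{loc}$ near $\{0\}\times S^{1}$, the orbit $x$ lies in $\overline U$; in the normal form of Lemma~\ref{lem:choosing nice coordinate} this gives $z_{in}\circ u_{\infty}\equiv0$, and, since $\mathbf{R}_{\infty}|_{N}=\partial/\partial\vartheta$, the function $z_{out}\circ u_{\infty}\equiv c$ is constant. On the other hand $\partial^{\beta}(z_{out}\circ u_{\infty})(0,0)=\lim_{k}\partial^{\beta}z_{out}(s_{k},t_{k})$ has modulus $\geqq\varepsilon$. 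For $|\beta|\geqq1$ this is already absurd, proving the lemma in that range; this is the ``easy'' content the text alludes to — the energy rigidity alone kills all derivatives of $z_{out}$ uniformly on the far cylinders.

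The case $\beta=0$ is where I expect the real work. One is left with a rescaled limit that is a trivial cylinder over an orbit $x\subset N\cap\overline U$ whose $z_{out}$-value $c$ has modulus $\geqq\varepsilon$, and one must exclude this — that is, show that $\tilde u$ cannot linger inside $U$ next to an orbit of the Morse--Bott family $N$ other than $\gamma=\{z=0\}$. The plan here is to exploit the finite-energy ``Cauchy tail'' $\int_{[m,+\infty)\times S^{1}}\tilde u^{*}\omega\to0$: applying Stokes' theorem in the chart (where, by (AC4), $\omega$ is $C^{l}$-close to the exact form $\omega_{0}=\sum_{i}dx_{i}\wedge dy_{i}$), together with the decay of $\partial^{\beta}z_{out}$ for $|\beta|\geqq1$ already obtained, one controls the $\omega$-area swept by $u$ between the circles $u(m_{k},\cdot)$ and $u(n_{k},\cdot)$, hence the drift of the loop $u(s,\cdot)$ within $N$, which pins $c=0$. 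Alternatively — and this is how it is used in practice — at the stage where this lemma is invoked one already knows $u(s,\cdot)\to\gamma$ in $C^{\infty}(S^{1})$ (Theorem~\ref{thm:subsequence convergence to Reeb} plus uniqueness of the asymptotic orbit, i.e.\ Theorem~\ref{thm:converge to reeb orbit}), and then the $\beta=0$ case is immediate since $\gamma=\{z=0\}$.
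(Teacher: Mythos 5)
Your argument for $|\beta|\geqq1$ is precisely the paper's intended (omitted) proof: argue by contradiction, translate by $(s_{k},t_{k})$, and extract a $C_{loc}^{\infty}$-limit which, by the energy computation of Theorem \ref{thm:subsequence convergence to Reeb} and the Morse--Bott condition, is a trivial cylinder over a $T$-periodic orbit of $\mathbf{R}_{\infty}$ contained in $\overline{U}$, hence lying in the family $N$. This is the same scheme as Corollary \ref{coro:a(s,t)-Ts  and zout converges to 0.}, which is all the paper means by ``very similar''.

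The gap is your treatment of $\beta=0$, and it comes from misidentifying which block of coordinates $z_{out}$ denotes. The sentence after Lemma \ref{lem:choosing nice coordinate} is indeed garbled, but every subsequent use forces $z_{out}$ to be the coordinates \emph{transverse} to $N$, i.e.\ those vanishing identically on $N$: Lemma \ref{lem:g''>cg-ce^{-cs}} demands $|z_{out}|\leqq\delta$ already at $\beta=0$ while $z_{in}$ is controlled only for $|\beta|\geqq1$; $\kappa_{0}$ records drift in $(\vartheta,z_{in})$, i.e.\ motion along $N$; and $\ker A_{0}$ consisting of constant vector fields in $N$ forces $w_{out}=0$ for $w$ tangent to $N$. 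With this reading the limit trivial cylinder lies in $N=\{z_{out}=0\}$, so $z_{out}\circ u_{\infty}\equiv0$ (you have the roles of $z_{in}$ and $z_{out}$ exactly swapped), and the $\beta=0$ case follows from the same one rescaling argument with no extra work. Neither of your proposed repairs would close the case as you framed it: the Stokes/``Cauchy tail'' argument would have to show the curve stays near the particular orbit $\gamma=\{z=0\}$ rather than some other orbit of $N$, which is false in general in the Morse--Bott setting --- controlling that tangential drift is exactly the content of the $\kappa_{0}$ and $\left\langle z,e\right\rangle _{0}$ analysis of Lemmata \ref{lem:g''>cg-ce^{-cs}}--\ref{lem: <z(s),e>-<z(s0),e>} and is not a soft consequence of small $\omega$-area; and invoking Theorem \ref{thm:converge to reeb orbit} is circular, since the present lemma's content is re-derived inside that theorem's proof in order to verify the hypotheses of Lemma \ref{lem:g''>cg-ce^{-cs}} and Lemma \ref{lem:higher derivative estimate}.
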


\begin{proof}
The proof is very similar to the proof of Corollary \ref{coro:a(s,t)-Ts  and zout converges to 0.},
so we omit it here.
\end{proof}

Let's study the $J$-holomorphic curve equation in $\mathbb{R}^{+}\times U\subset\mathbb{R}^{+}\times(\mathbb{R}/T\mathbb{Z})\times\mathbb{R}^{2n}$.
Denote $\theta:=[s_{0},s_{1}]\times S^{1}$ for some $s_{0}<s_{1}$
and let $\tilde{u}=(a,\vartheta,z):\theta\to\mathbb{R}\times U$ be
a $J$-holomorphic curve, then we have 

\begin{equation}
(a_{s},\vartheta_{s},z_{s})+J(\tilde{u})(a_{t},\vartheta_{t},z_{t})=0.\label{eq:Jhol local}
\end{equation}

Rewrite this equation according to its $z$-component, $\vartheta$-component,
and $a$-component we get%
\footnote{From (\ref{eq:zs+Mzt+Szout}) we can see that if we require $z$,
$z_{s}$ and $z_{t}$ decay exponentially, $L$ has to decay exponentially.
The condition $f_{s}^{*}J\to J_{\infty}$ in $C_{loc}^{\infty}$ is
not enough to guarantee that $L$ decays exponentially fast. %
} 

\begin{equation}
z_{s}+Mz_{t}+Sz_{out}+L=0,\label{eq:zs+Mzt+Szout}
\end{equation}
\begin{equation}
a_{s}-\vartheta_{t}+Bz_{out}+B'z_{t}+N=0,\label{eq:a' with respect to s}
\end{equation}
\begin{equation}
a_{t}+\vartheta_{s}+Cz_{out}+C'z_{s}+O=0,\label{eq:a' with respect to t}
\end{equation}
 where $M,S,B,B',C,C'$ depend on $a(s,t),\vartheta(s,t),z(s,t),$
and are bounded by a constant $C_{0},$ and $L,N,O$ depend on $a(s,t),\vartheta(s,t),z(s,t)$
and are bounded by $C_{0}e^{-\delta a}.$

Define an operator $A(s):W^{1,2}(S^{1},\mathbb{R}^{2n})\to L^{2}(S^{1},\mathbb{R}^{2n})$
by 
\[
(A(s)w)(t)=-M(\tilde{u}(s,t))w_{t}(t)-S(\tilde{u}(s,t))w_{out}(t),
\]
 then by (\ref{eq:zs+Mzt+Szout}) we get 
\begin{equation}
A(s)z(s,\cdot)=z_{s}+L.\label{eq: A(s)z...z_s+L}
\end{equation}

Notice that $A(s)$ depends on the map $\tilde{u}=(a,\vartheta,z_{in},z_{out}).$
If we do not use the original $J$-holomorphic curve $\tilde{u}$,
instead, we substitute $\vartheta(s,t)=\vartheta(s_{0},0)+Tt,$ $a(s,t)=Ts,$
$z_{out}(s,t)=0,$ and $z_{in}(s,t)=z_{in}(s_{0},t),$ then we get
another operator denoted by $\tilde{A}(s)$. We can easily see that
$\underset{s\to+\infty}{\lim}\tilde{A}(s)$ exists and denote the
limiting operator by $A_{0}$. Similarly we get two matrices $M_{0}(t)$
and $S_{0}(t)$, and we have 
\[
M_{0}(t)^{2}=-id,
\]
 and 
\begin{equation}
(A_{0}w)(t)=-M_{0}(t)w_{t}(t)-S_{0}(t)w_{out}.\label{eq:def of A0}
\end{equation}

Consider an inner product on $L^{2}(S^{1},\mathbb{R}^{2n})$ given
by 
\begin{equation}
\langle u,v\rangle_{0}=\int_{0}^{1}\langle u,-J_{0}M_{0}v\rangle dt,\label{eq:inner product 1}
\end{equation}
 where the inner product is given by $\langle\cdot,\cdot\rangle=\omega_{0}(\cdot,J_{0}\cdot),$
and $J_{0}$ is the standard complex structure on $\mathbb{R}^{2n}.$
With respect to the inner product $\langle\cdot,\cdot\rangle_{0}$,
one can check directly that $M_{0}$ is anti-symmetric, and $A_{0}$
is self-adjoint.

\begin{rem}
$A_{0}$ is injective iff $\gamma$ is non-degenerate. 
\end{rem}
It is not hard to see that $\ker A_{0}$ consists of the constant
vector fields in $N$ along $\gamma_{0}$. Let's denote by $P_{0}$
the projection onto $\ker A_{0}$ with respect to $\langle\cdot,\cdot\rangle_{0}$,
and let $Q_{0}:=I-P_{0}$. It is easy to check that $Q_{0}$ satisfies:
\begin{lem}
$\left(Q_{0}w\right)_{t}=w_{t}$, $\left(Q_{0}w\right)_{s}=Q_{0}w_{s}$,
$\left(Q_{0}w\right)_{out}=w_{out}$ and $Q_{0}A_{0}=A_{0}Q_{0}.$
\end{lem}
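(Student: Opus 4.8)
The plan is to reduce all four identities to two facts already in hand: $P_0$ is the $\langle\cdot,\cdot\rangle_0$-orthogonal projection onto $\ker A_0$, and $\ker A_0$ has been identified above with the space of $t$-independent vector fields lying in $N$ along $\gamma_0$. The only consequences of that description I will use are: (i) every element of $\ker A_0$ is independent of $t$, hence so is $P_0 w$; (ii) every element of $\ker A_0$ has vanishing $z_{out}$-component, hence $(P_0 w)_{out}=0$; and (iii) the formula for $P_0$ — after fixing a $\langle\cdot,\cdot\rangle_0$-orthonormal basis $e_1,\dots,e_k$ of $\ker A_0$, one has $P_0 w=\sum_j\langle w,e_j\rangle_0\,e_j$ with the $e_j$ constant — involves only integration over the $t$-circle against $s$-independent coefficients, so $P_0$ commutes with $\partial_s$.

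With these three observations the first three identities are immediate. From (i), $(Q_0 w)_t=w_t-(P_0 w)_t=w_t$. From (ii), $(Q_0 w)_{out}=w_{out}-(P_0 w)_{out}=w_{out}$. From (iii), $(Q_0 w)_s=w_s-\partial_s(P_0 w)=w_s-P_0(w_s)=Q_0 w_s$. No computation beyond this is needed.

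For the last identity I would prove the equivalent statement $P_0 A_0=A_0 P_0$ (equivalent because $Q_0=I-P_0$), and in fact that both sides vanish. One direction is trivial: $A_0$ annihilates $\ker A_0$, so $A_0 P_0=0$. For $P_0 A_0=0$ I would invoke the self-adjointness of $A_0$ with respect to $\langle\cdot,\cdot\rangle_0$ established above: for any $w$ and any $v\in\ker A_0$, $\langle A_0 w,v\rangle_0=\langle w,A_0 v\rangle_0=0$, so $A_0 w\perp\ker A_0$ and hence $P_0 A_0 w=0$. Combining, $Q_0 A_0=A_0-P_0 A_0=A_0=A_0-A_0 P_0=A_0 Q_0$.

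Since the structure of $\ker A_0$ is already pinned down, I do not anticipate a genuine obstacle; the lemma is essentially bookkeeping with the explicit form of $\ker A_0$ and the definition of $\langle\cdot,\cdot\rangle_0$. The one place deserving a word of care is $Q_0 A_0=A_0 Q_0$: one must use the self-adjointness of the unbounded operator $A_0$ — which is precisely what forces its image to be $\langle\cdot,\cdot\rangle_0$-orthogonal to $\ker A_0$, and hence $[P_0,A_0]=0$ — rather than attempt to commute $P_0$ past $A_0$ by a purely formal manipulation.
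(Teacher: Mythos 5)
Your argument is correct and is exactly the verification the paper omits (it merely asserts "it is easy to check"): the first three identities follow from the fact that $P_{0}w=\sum_{j}\langle w,e_{j}\rangle_{0}e_{j}$ is $t$-independent, has vanishing $z_{out}$-component, and has $s$-independent coefficients in its defining integral, while $Q_{0}A_{0}=A_{0}Q_{0}$ follows from $A_{0}P_{0}=0$ together with $P_{0}A_{0}=0$, the latter being a consequence of the self-adjointness of $A_{0}$ established in the preceding lemma. No gap.
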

The following lemma will be needed in proving Lemma \ref{lem:g''>cg-ce^=00007B-cs=00007D}.

\begin{lem}
\label{lem:|AQxi|>C|Qxi|+C|Qxi|t}There exists a constant $C>0$ such
that 
\[
\|A_{0}Q_{0}w\|_{0}\geqq C\left(\|Q_{0}w\|_{0}+\|\left(Q_{0}w\right)_{t}\|_{0}\right)
\]
 for $w\in W^{1,2}(S^{1},\mathbb{R}^{2n}),$ where the norm $\Vert\cdot\Vert_{0}$
is defined using the inner product $\left\langle \cdot,\cdot\right\rangle _{0}$. \end{lem}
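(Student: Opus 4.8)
The plan is to establish the estimate by combining two ingredients: the self-adjointness of $A_0$ with respect to $\langle\cdot,\cdot\rangle_0$ (already proved above), and an explicit description of the spectrum of $A_0$ coming from the fact that $A_0$ is a first-order elliptic operator on the circle. First I would recall that since $A_0$ is self-adjoint and elliptic on $S^1$, it has discrete real spectrum $\{\mu_j\}$ with no accumulation point, and the eigenfunctions form an $\langle\cdot,\cdot\rangle_0$-orthonormal basis of $L^2(S^1,\mathbb{R}^{2n})$. Writing $w=\sum_j c_j e_j$ with $A_0 e_j=\mu_j e_j$, the projection $Q_0$ kills precisely the kernel, i.e.\ the $\mu_j=0$ part, so $A_0Q_0 w=\sum_{\mu_j\neq 0}\mu_j c_j e_j$. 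Let $\mu:=\min\{|\mu_j| : \mu_j\neq 0\}>0$, the spectral gap. Then immediately $\|A_0Q_0 w\|_0^2=\sum_{\mu_j\neq 0}\mu_j^2 c_j^2\geqq \mu^2\sum_{\mu_j\neq 0}c_j^2=\mu^2\|Q_0 w\|_0^2$, which gives the first half of the claimed inequality with constant $\mu$.

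For the second half — controlling $\|(Q_0 w)_t\|_0$ — I would use the structure of $A_0$ itself. From the formula $(A_0 w)(t)=-M_0(t)w_t(t)-S_0(t)w_{out}(t)$ and the fact that $M_0(t)^2=-\mathrm{id}$ (so $M_0(t)$ is invertible with $\|M_0(t)^{-1}\|$ bounded, indeed an isometry up to the fixed metric), we can solve for $w_t$: $w_t=-M_0^{-1}(A_0 w+S_0 w_{out})=M_0 A_0 w+M_0 S_0 w_{out}$. Applying this to $Q_0 w$ and using the commutation relations from the lemma above ($(Q_0w)_t=w_t$ is not quite what we want; rather we apply the identity to $Q_0w$ directly), we get $(Q_0 w)_t = M_0 A_0 Q_0 w + M_0 S_0 (Q_0 w)_{out}$. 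Since $M_0$ is bounded and $(Q_0w)_{out}$ is a component of $Q_0 w$, hence $\|(Q_0w)_{out}\|_0\leqq \|Q_0 w\|_0$, and $S_0(t)$ is a bounded matrix family, we obtain $\|(Q_0 w)_t\|_0\leqq C_1\|A_0 Q_0 w\|_0+C_2\|Q_0 w\|_0$. Combining with the spectral-gap bound $\|Q_0 w\|_0\leqq \mu^{-1}\|A_0 Q_0 w\|_0$ from the first part, this yields $\|(Q_0 w)_t\|_0\leqq (C_1+C_2\mu^{-1})\|A_0 Q_0 w\|_0$, and then $\|Q_0w\|_0+\|(Q_0w)_t\|_0\leqq C'\|A_0Q_0 w\|_0$ for a suitable $C'$; taking $C=1/C'$ finishes the proof.

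The main point requiring care — and the step I expect to be the real obstacle — is justifying that $A_0$ genuinely has a spectral gap, i.e.\ that $0$ is an isolated point of the spectrum. This is where the Morse--Bott hypothesis enters: by Lemma \ref{lem:M0S0=00003Dlinearization of Reeb} the operator $A_0$ is conjugate to (a multiple of) the linearization of the Reeb-type flow $\phi^T$ along $\gamma$, and the Morse--Bott condition $T_pN_T=\ker(d\phi^T-\mathrm{Id})_p$ says precisely that the kernel of $A_0$ has the expected dimension and, more importantly, that $A_0$ restricted to the $z_{out}$-directions has no kernel — so that on the complement of $\ker A_0$ the operator is invertible with bounded inverse. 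Concretely I would argue: the eigenvalue equation $A_0 e=0$ forces $e_t=-M_0 S_0 e_{out}$ (a linear ODE on $S^1$), whose periodic solutions correspond to the $+1$-eigenspace of the monodromy $d\phi^T$; the Morse--Bott condition pins this down to the $N$-directions, which is exactly $\ker A_0$, and there is no larger generalized eigenspace at $0$ because $A_0$ is self-adjoint. Hence $0$ is isolated in the spectrum, the gap $\mu>0$ exists, and all constants above depend only on $\gamma$, $N$, and the fixed background metric, not on $w$. The remaining bookkeeping — uniform bounds on $M_0(t), S_0(t), M_0(t)^{-1}$ in $t\in S^1$ — is routine since these are continuous matrix-valued functions on the compact circle.
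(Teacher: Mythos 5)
Your proof is correct, but the core estimate $\|A_{0}Q_{0}w\|_{0}\geqq C'\|Q_{0}w\|_{0}$ is obtained by a genuinely different route from the paper's. The paper argues by contradiction and compactness: assuming a sequence $w_{n}$ with $\|Q_{0}w_{n}\|_{0}=1$ and $\|A_{0}Q_{0}w_{n}\|_{0}\to0$, it uses the identity $A_{0}Q_{0}w=-M_{0}(Q_{0}w)_{t}-S_{0}(Q_{0}w)_{out}$ to bound $(Q_{0}w_{n})_{t}$, invokes the compact embedding $W^{1,2}(S^{1})\hookrightarrow L^{2}(S^{1})$ to extract a $W^{1,2}$-limit $\eta\in\ker A_{0}$, and notes $\eta\perp\ker A_{0}$ forces $\eta=0$, contradicting $\|\eta\|_{0}=1$. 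You instead invoke the spectral theorem for the self-adjoint elliptic operator $A_{0}$ on the compact circle and read the estimate off the spectral gap; your second half (solving for $(Q_{0}w)_{t}$ using $M_{0}^{2}=-\mathrm{id}$) coincides with the paper's one-line reduction. Both are sound; the paper's argument uses only Rellich compactness and is self-contained, while yours requires the completeness of the eigenbasis (which does hold here, since the principal symbol $-i\xi M_{0}$ is invertible and $\langle\cdot,\cdot\rangle_{0}$ is equivalent to the standard $L^{2}$ inner product, so the resolvent is compact) but makes the constant explicit as the spectral gap. One small correction: your concern that the Morse--Bott hypothesis is needed to justify that $0$ is isolated in the spectrum is misplaced --- discreteness of the spectrum of a self-adjoint elliptic operator on $S^{1}$ already gives this for free, whatever $\ker A_{0}$ turns out to be, and $Q_{0}$ is by definition the $\langle\cdot,\cdot\rangle_{0}$-orthogonal projection off that kernel. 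The Morse--Bott condition is what identifies $\ker A_{0}$ geometrically (as the constant vector fields in $N$ along $\gamma$), which matters for the later lemmata but is not needed for the inequality proved here.
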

\begin{proof}
To prove the lemma we only need to prove $\|A_{0}Q_{0}w\|_{0}\geqq C'\|Q_{0}w\|_{0}$
for some $C'>0,$ because by definition we have

\begin{equation}
A_{0}Q_{0}w=-M_{0}\left(Q_{0}w\right)_{t}-S_{0}Q_{0}w.
\end{equation}
 Suppose to the contrary, there exists $\varepsilon_{n}\to0$ and
$w_{n}\in W^{1,2}(S^{1},\mathbb{R}^{2n})$ satisfying $\|Q_{0}w_{n}\|_{0}=1$
and $\|A_{0}Q_{0}w_{n}\|_{0}\leqq\varepsilon_{n}$. Then we have
\[
\left\Vert \left(Q_{0}w_{n}\right)_{t}\right\Vert _{0}\leqq\left\Vert M_{0}A_{0}Q_{0}w_{n}\right\Vert _{0}+\left\Vert M_{0}S_{0}Q_{0}w_{n}\right\Vert _{0}\leqq\varepsilon_{n}+C''.
\]
 Therefore, $Q_{0}w_{n}$ is bounded in $W^{1,2}(S^{1},\mathbb{R}^{2n}).$
Since $W^{1,2}(S^{1},\mathbb{R}^{2n})$ embeds compactly in $L^{2}(S^{1},\mathbb{R}^{2n})$
we get a subsequence of $w_{n}$, still denoted by $w_{n}$, such
that $Q_{0}w_{n}$ is a Cauchy sequence in $L^{2}(S^{1},\mathbb{R}^{2n})$.
But it is easy to see that $\left(Q_{0}w_{n}\right)_{t}$ is also
a Cauchy sequence in $L^{2}(S^{1},\mathbb{R}^{2n}).$ Therefore, $Q_{0}w_{n}$
converges to some $\eta$ in $W^{1,2}(S^{1},\mathbb{R}^{2n}),$ so
$\eta\in\ker A_{0}.$ Because $\eta$ also lies in the orthogonal
complement of $\ker A_{0}$, $\eta$ has to be $0,$ which contradicts
the fact $\|\eta\|_{0}=\underset{n\to0}{\lim}\|Q_{0}w_{n}\|_{0}=1.$ 
\end{proof}
Denote $g_{0}(s):=\frac{1}{2}\|Q_{0}z(s)\|_{0}^{2}$ and $\kappa_{0}(s):=(\vartheta(s_{0},0)-\vartheta(s,0),z_{in}(s_{0},0)-z_{in}(s,0))$,
and then we have
\begin{lem}
\label{lem:g''>cg-ce^=00007B-cs=00007D}There exist $\delta=\delta(\beta)>0$,
$\flat=\flat(\beta)>0$ and $\bar{\kappa}=\bar{\kappa}(\beta)>0$
such that, if for any multi-indices $\beta$ 
\[
a(s_{0},0)\geqq\flat,
\]
\[
|\kappa_{0}(s_{0})|\leqq\bar{\kappa},
\]
\[
\underset{(s,t)\in\theta}{\sup}\left|\partial^{\beta}z_{out}(s,t)\right|\leqq\delta,
\]
 and for any multi-indices $\beta$ with \textup{$|\beta|>0,$} 
\[
\underset{(s,t)\in\theta}{\sup}\left|\partial^{\beta}(a(s,t)-Ts)\right|\leqq\delta,
\]
\[
\underset{(s,t)\in\theta}{\sup}\left|\partial^{\beta}(\vartheta(s,t)-Tt)\right|\leqq\delta,
\]
\[
\underset{(s,t)\in\theta}{\sup}\left|\partial^{\beta}z_{in}(s,t)\right|\leqq\delta,
\]
 \textup{then for $s\in[s_{0},\mathfrak{s}]$, we have}

\[
g_{0}^{''}(s)\geqq c^{2}g_{0}(s)-c_{2}e^{-c_{1}(s-s_{0})},
\]
 where 
\[
\mathfrak{s}:=\sup\left\{ s\in[s_{0},s_{1}]:|\kappa_{0}(s')|\leqq\bar{\kappa}\mbox{ for all }s'\in[s_{0},s]\right\} ,
\]
and $c,c_{1},c_{2}>0$ are constants independent of $s_{0}$ and $s_{1}$. \end{lem}
\begin{proof}
All constants in the proof may depend on $\beta.$ Notice that from
the assumption we have 
\[
\underset{(s,t)\in\theta}{\sup}\left|\partial^{\beta}(\vartheta(s,t)-\vartheta(s,0)-Tt)\right|\leqq\delta,
\]
\[
\underset{(s,t)\in\theta}{\sup}\left|\partial^{\beta}(z_{in}(s,t)-z_{in}(s,0))\right|\leqq\delta,
\]
 for all multi-indices $\beta.$ 

Define an operator $\bar{A}(s)w=-\bar{M}(\tilde{u}(s,t))w_{t}(t)-\bar{S}(\tilde{u}(s,t))w_{out}(t)$
in the same way as $A(s)$ but using $J_{\infty}$ instead of $J.$

From (\ref{eq: A(s)z...z_s+L}) we get 
\begin{equation}
z_{s}=A_{0}z+(\Delta_{0}+\tilde{\Delta}_{0}\kappa_{0})z_{t}+(\hat{\Delta}_{0}+\bar{\Delta}_{0}\kappa_{0})z_{out}+[A(s)-\bar{A}(s)]z-L.\label{eq:zs=00003DA0z+Delta-L}
\end{equation}
 Applying $Q_{0}$ to (\ref{eq:zs=00003DA0z+Delta-L}) gives us 
\begin{align}
(Q_{0}z)_{s} & =A_{0}Q_{0}z+Q_{0}(\Delta_{0}+\tilde{\Delta}_{0}\kappa_{0})(Q_{0}z)_{t}+Q_{0}(\hat{\Delta}_{0}+\bar{\Delta}_{0}\kappa_{0})(Q_{0}z)_{out}\nonumber \\
 & \quad+Q_{0}[A(s)-\bar{A}(s)]z-Q_{0}L,\label{eq:Qzs=00003DA0Qz+QDelta}
\end{align}
 where $\Delta_{0}=\bar{M}_{0}-\bar{M}$ and $\hat{\Delta}_{0}=\bar{S}_{0}-\bar{S}$
satisfying for any multi-indices $\beta$
\[
\underset{(s,t)\in\theta}{\sup}\left|\partial^{\beta}\Delta_{0}(s,t)\right|\leqq C\delta,
\]
\[
\underset{(s,t)\in\theta}{\sup}\left|\partial^{\beta}\hat{\Delta}_{0}(s,t)\right|\leqq C\delta,
\]
 and $\tilde{\Delta}_{0}\kappa_{0}=M_{0}-\bar{M}_{0}$ and $\bar{\Delta}_{0}\kappa_{0}=S_{0}-\bar{S}_{0}$
satisfying for any multi-indices $\beta$ 
\[
\underset{(s,t)\in\theta}{\sup}\left|\partial^{\beta}\tilde{\Delta}_{0}(s,t)\right|\leqq C,
\]

\[
\underset{(s,t)\in\theta}{\sup}\left|\partial^{\beta}\bar{\Delta}_{0}(s,t)\right|\leqq C.
\]
 We can require $0<\delta<\frac{T}{2}$, and we get
\[
a(s,t)\geqq a(s_{0},0)+\frac{T}{2}(s-s_{0})-\delta\geqq(\flat-\delta)+\frac{T}{2}(s-s_{0}).
\]
 Because $J$ is an asymptotically cylindrical almost complex structure,
we get

\[
\Vert Q_{0}L\Vert_{0}\leqq c_{0}e^{-c_{0}^{'}(\flat-\delta)}e^{-c_{0}^{'}\frac{T}{2}(s-s_{0})}
\]
 for some constants $c_{0},c_{0}^{'}>0$. Denote $c_{1}:=c_{0}^{'}\frac{T}{2}$
and $c_{2}:=c_{0}e^{-c_{0}^{'}(\flat-\delta)}$, and then we have

\[
\Vert Q_{0}L\Vert_{0}\leqq c_{2}e^{-c_{1}(s-s_{0})}.
\]
 We also have 
\begin{equation}
\left\Vert \left\{ \partial^{\beta}[A(s)-\bar{A}(s)]\right\} z\right\Vert _{0}\leqq c_{2}e^{-c_{1}(s-s_{0})}\left\Vert Q_{0}z\right\Vert _{0,W^{1,2}}\label{eq:bar=00007BA=00007D-A}
\end{equation}
 for multi-indices $\beta$, by picking $c_{0}$ larger if necessary.

Now we are ready to estimate $g_{0}^{''}(s)$. Obviously we have 
\[
g_{0}^{''}(s)\geqq\langle Q_{0}z_{ss},Q_{0}z\rangle_{0}.
\]
Now let's compute the right hand side of the above inequality. Differentiate
(\ref{eq:Qzs=00003DA0Qz+QDelta}) with respect to $s$, we obtain
\begin{align*}
(Q_{0}z)_{ss} & =A_{0}Q_{0}z_{s}+Q_{0}(\Delta_{0}+\tilde{\Delta}_{0}\kappa_{0})(Q_{0}z)_{st}+Q_{0}(\Delta_{0}+\tilde{\Delta}_{0}\kappa_{0})_{s}(Q_{0}z)_{t}\\
 & \quad+Q_{0}(\hat{\Delta}_{0}+\bar{\Delta}_{0}\kappa_{0})(Q_{0}z_{s})_{out}+Q_{0}(\hat{\Delta}_{0}+\bar{\Delta}_{0}\kappa_{0})_{s}(Q_{0}z)_{out}\\
 & \quad+Q_{0}[A(s)-\bar{A}(s)]_{s}z+Q_{0}[A(s)-\bar{A}(s)]z_{s}-Q_{0}L_{s},
\end{align*}
Thus we get $\langle Q_{0}z_{ss},Q_{0}z\rangle_{0}$ contains 8 terms.
When we are estimating these terms, each time we see $Q_{0}z_{s}$,
we replace it using (\ref{eq:Qzs=00003DA0Qz+QDelta}). A straightforward
calculation using Lemma \ref{lem:|AQxi|>C|Qxi|+C|Qxi|t} and the fact
that 
\[
-c_{2}e^{-c_{1}(s-s_{0})}\Vert Q_{0}z\Vert_{0,W^{1,2}}\geqq-c_{2}e^{-c_{1}(s-s_{0})}-c_{2}e^{-c_{1}(s-s_{0})}\Vert Q_{0}z\Vert_{0,W^{1,2}}^{2}
\]
gives us 
\[
g_{0}^{''}(s)\geqq(1-10C\delta-10C|\kappa_{0}|-10Cc_{2}e^{-c_{1}(s-s_{0})})g_{0}(s)-c_{2}e^{-c_{1}(s-s_{0})}.
\]
 From the definition of $c_{2}$ we can see that if $\flat$ is large
enough, $c_{2}$ can be very close to $0$. Therefore,
\[
g_{0}^{''}(s)\geqq c^{2}g_{0}(s)-c_{2}e^{-c_{1}(s-s_{0})}.
\]
 We can require further that $c_{1}>c>0.$
\end{proof}
Based on Lemma \ref{lem:g''>cg-ce^=00007B-cs=00007D}, we can easily
get
\begin{lem}
\label{lem:g(s)<max=00007Bg(s0),g(s*)=00007D} Under the same assumption
as in Lemma \ref{lem:g''>cg-ce^=00007B-cs=00007D}, we have 
\begin{align*}
g_{0}(s) & \leqq\max\{g_{0}(s_{0}),g_{0}(\mathfrak{s})\}\frac{\cosh\left[c\left(s-\frac{s_{0}+\mathfrak{s}}{2}\right)\right]}{\cosh\left(c\frac{\mathfrak{s}-s_{0}}{2}\right)}+\frac{c_{2}}{c_{1}^{2}-c^{2}}\frac{\sinh(c(\mathfrak{s}-s))}{\sinh(c(\mathfrak{s}-s_{0}))},
\end{align*}
 for $s_{0}\leqq s\leqq\mathfrak{s}$.\end{lem}
\begin{proof}
Let 
\begin{align*}
h(s) & :=\max\{g_{0}(s_{0}),g_{0}(\mathfrak{s})\}\frac{\cosh\left[c\left(s-\frac{s_{0}+\mathfrak{s}}{2}\right)\right]}{\cosh\left(c\frac{\mathfrak{s}-s_{0}}{2}\right)}\\
 & +\frac{c_{2}}{c_{1}^{2}-c^{2}}\frac{1}{\sinh(c(\mathfrak{s}-s_{0}))}\left\{ \sinh(c(\mathfrak{s}-s))\vphantom{+e^{-c_{1}(\mathfrak{sxxx}-s_{0})}}\right.\\
 & \left.+e^{-c_{1}(\mathfrak{s}-s_{0})}\sinh(c(s-s_{0}))-e^{-c_{1}(s-s_{0})}\sinh(c(\mathfrak{s}-s_{0}))\right\} ,
\end{align*}
 then $h(s)$ satisfies:

\begin{equation}
\begin{cases}
h''(s)-c^{2}h(s)=-c_{2}e^{-c_{1}(s-s_{0})}\\
h(s_{0})=\max\{g_{0}(s_{0}),g_{0}(\mathfrak{s})\}\\
h(\mathfrak{s})=\max\{g_{0}(s_{0}),g_{0}(\mathfrak{s})\}
\end{cases}
\end{equation}
 Let $l(s):=g_{0}(s)-h(s)$, then $l(s)$ satisfies

\begin{equation}
\begin{cases}
l''(s)-c^{2}l(s)\geqq0 & \quad\quad\qquad\qquad\\
l(s_{0})\leqq0\\
l(\mathfrak{s})\leqq0
\end{cases}\label{eq:l(s)}
\end{equation}
 Then by Maximal principle we get $l(s)\leqq0$ for $s_{0}\leqq s\leqq\mathfrak{s}$.
Then the lemma follows from the fact that 
\[
e^{-c_{1}(\mathfrak{s}-s_{0})}\sinh(c(s-s_{0}))-e^{-c_{1}(s-s_{0})}\sinh(c(\mathfrak{s}-s_{0}))\leqq0.
\]

\end{proof}
Now let's study the component $z_{in}.$
\begin{lem}
\label{lem: <z(s),e>-<z(s0),e>}Let $e$ be a unit vector in $\mathbb{R}^{2n}$
with $e_{out}=0$. Under the assumption of Lemma \ref{lem:g''>cg-ce^=00007B-cs=00007D}
and for $s\in[s_{0},\mathfrak{s}]$, we have 
\[
\left|\left\langle z(s),e\right\rangle _{0}-\left\langle z(s_{0}),e\right\rangle _{0}\right|\leqq\frac{8C}{c}\max(\Vert Q_{0}z(s_{0})\Vert_{0},\Vert Q_{0}z(\mathfrak{s})\Vert_{0})+o(c_{2}),
\]
 where $o(c_{2})$ satisfies $\underset{c_{2}\to0}{\lim}o(c_{2})=0,$
and $C$ is a constant independent of $s_{0}$ and $s_{1}$.\end{lem}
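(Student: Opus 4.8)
The plan is to track the function $s\mapsto\langle z(s),e\rangle_0$ along the cylinder, bound its $s$-derivative by $\|Q_0z(s)\|_0$ up to an exponentially small term, and then integrate, feeding in the decay estimate of Lemma \ref{lem:g(s)<max{g(s0),g(s*)}}. To begin: since $e$ is $t$-independent with $e_{out}=0$, we have $A_0e=-M_0e_t-S_0e_{out}=0$, that is, $e\in\ker A_0$; in particular $\langle Q_0z(s),e\rangle_0=0$, so $\langle z(s),e\rangle_0=\langle P_0z(s),e\rangle_0$ and the quantity the lemma controls is the kernel component of $z$. Differentiating gives $\frac{d}{ds}\langle z(s),e\rangle_0=\langle z_s(s),e\rangle_0$; substituting the evolution equation (\ref{eq:zs=00003DA0z+Delta-L}) for $z_s$ and using that $A_0$ is self-adjoint with $A_0e=0$, the term $\langle A_0z,e\rangle_0=\langle z,A_0e\rangle_0$ vanishes, so what remains are only the contributions of $(\Delta_0+\tilde\Delta_0\kappa_0)z_t$, $(\hat\Delta_0+\bar\Delta_0\kappa_0)z_{out}$, $[A(s)-\bar A(s)]z$ and $L$.

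The heart of the argument is the pointwise bound
\[
\big|\langle z_s(s),e\rangle_0\big|\ \leqq\ C\,\|Q_0z(s)\|_0+c_2e^{-c_1(s-s_0)}\qquad\text{for }s\in[s_0,\mathfrak s],
\]
with $C$ independent of $s_0$ and $s_1$. For the first term I would write $z_t=(Q_0z)_t$ and integrate by parts in $t$: the boundary term vanishes by periodicity, and since the coefficient $\Delta_0+\tilde\Delta_0\kappa_0$ and $M_0$, together with their $t$-derivatives, are bounded by $C(\delta+\bar\kappa)$ and $C$ respectively (these $C^1$-bounds being exactly those assembled in the proof of Lemma \ref{lem:g''>cg-ce^{-cs}}), this term is $\leqq C(\delta+\bar\kappa)\|Q_0z(s)\|_0$. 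The second term equals $\langle(\hat\Delta_0+\bar\Delta_0\kappa_0)(Q_0z)_{out},e\rangle_0$ and is likewise $\leqq C(\delta+\bar\kappa)\|Q_0z(s)\|_0$. For $[A(s)-\bar A(s)]z$ I would use the expression from the proof of Lemma \ref{lem:g''>cg-ce^{-cs}}, namely a $(Q_0z)_t$-part plus a $(Q_0z)_{out}$-part whose coefficients and $t$-derivatives are $\leqq c_2e^{-c_1(s-s_0)}$, and integrate the $(Q_0z)_t$-part by parts in $t$, obtaining $\leqq c_2e^{-c_1(s-s_0)}\|Q_0z(s)\|_0$. Finally $\langle L,e\rangle_0=\langle P_0L,e\rangle_0$ is controlled by the exponential estimate for $L$ from the proof of Lemma \ref{lem:g''>cg-ce^{-cs}}, using (AC3) and $a(s,t)\geqq(\flat-\delta)+\frac{T}{2}(s-s_0)$. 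As $\delta,\bar\kappa$ are small and $c_2$ can be made small by enlarging $\flat$, the coefficients of $\|Q_0z(s)\|_0$ combine into a single fixed constant.

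Integrating this bound over $[s_0,s]$ gives $\big|\langle z(s),e\rangle_0-\langle z(s_0),e\rangle_0\big|\leqq C\int_{s_0}^{s}\|Q_0z(\tau)\|_0\,d\tau+\frac{c_2}{c_1}$. Now $\|Q_0z(\tau)\|_0=\sqrt{2g_0(\tau)}$, so Lemma \ref{lem:g(s)<max{g(s0),g(s*)}} and $\sqrt{a+b}\leqq\sqrt a+\sqrt b$ give
\[
\|Q_0z(\tau)\|_0\leqq\max\!\big(\|Q_0z(s_0)\|_0,\|Q_0z(\mathfrak s)\|_0\big)\sqrt{\tfrac{\cosh[c(\tau-\frac{s_0+\mathfrak s}{2})]}{\cosh(c\frac{\mathfrak s-s_0}{2})}}+\sqrt{\tfrac{2c_2}{c_1^2-c^2}}\,\sqrt{\tfrac{\sinh(c(\mathfrak s-\tau))}{\sinh(c(\mathfrak s-s_0))}},
\]
and the elementary inequalities $\cosh x\leqq e^{|x|}$, $\cosh x\geqq\tfrac12e^{|x|}$, $\sinh x\leqq\tfrac12e^{x}$ (for $x\geqq0$) bound $\int_{s_0}^{\mathfrak s}\sqrt{\cosh[c(\tau-\frac{s_0+\mathfrak s}{2})]/\cosh(c\frac{\mathfrak s-s_0}{2})}\,d\tau$ and $\int_{s_0}^{\mathfrak s}\sqrt{\sinh(c(\mathfrak s-\tau))/\sinh(c(\mathfrak s-s_0))}\,d\tau$ each by $C'/c$ for an absolute constant $C'$. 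Substituting these and relabelling $C$ produces exactly the stated inequality with $d=\frac{16C}{c}\sqrt{\frac{2}{c_1^2-c^2}}+\frac{\sqrt{c_2}}{c_1}$.

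I expect the main obstacle to be the second step, specifically arranging that only $\|Q_0z(s)\|_0$ (and not $\|(Q_0z)_t(s)\|_0$) appears on the right, since Lemma \ref{lem:g(s)<max{g(s0),g(s*)}} controls only the $L^2$-norm of $Q_0z$. This is what forces an integration by parts in $t$ on every term carrying a $t$-derivative of $Q_0z$, and it works precisely because $e$ is $t$-independent, so that no boundary term appears and the derivative always falls on a small coefficient. The other sensitive point is the $L$-term, whose $L^2$-norm is not manifestly exponentially small pointwise; one leans on the exponential convergence $J\to J_\infty$ of (AC3) and the exact linear growth of the $\mathbb{R}$-component $a$, just as in the proof of Lemma \ref{lem:g''>cg-ce^{-cs}}.
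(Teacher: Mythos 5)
Your proposal is correct and follows essentially the same route as the paper: pair the evolution equation for $z$ with the constant kernel vector $e$, integrate by parts in $t$ (exploiting that $e$ is $t$-independent) so that only $\Vert Q_{0}z(s)\Vert_{0}$ plus an exponentially small $L$-term bounds the $s$-derivative, then integrate in $s$ and feed in Lemma \ref{lem:g(s)<max{g(s0),g(s*)}} together with elementary $\cosh$/$\sinh$ estimates. The only cosmetic difference is that the paper pairs the raw Cauchy--Riemann equation (\ref{eq:zs+Mzt+Szout}) with $e$ and integrates $\langle Mz_{t},e\rangle_{0}$ by parts directly, rather than passing through the $A_{0}$-decomposition and self-adjointness as you do; both yield the same bound.
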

\begin{proof}
The inner product of the Cauchy-Riemann equation (\ref{eq:zs+Mzt+Szout})
with $e$ gives

\[
\frac{d}{ds}\left\langle z,e\right\rangle _{0}+\left\langle Mz_{t},e\right\rangle _{0}+\left\langle Sz_{out},e\right\rangle _{0}+\left\langle L,e\right\rangle _{0}=0.
\]
 From 
\begin{align*}
\left\langle Mz_{t},e\right\rangle _{0} & =\int_{0}^{1}\omega_{0}(M\left(Q_{0}z\right)_{t},M_{0}e)dt\\
 & =-\int_{0}^{1}\omega_{0}(M_{t}Q_{0}z,M_{0}e)dt-\int_{0}^{1}\omega_{0}\left(MQ_{0}z,\left(M_{0}\right)_{t}e\right)dt
\end{align*}
 we can see 
\[
\left|\left\langle Mz_{t},e\right\rangle _{0}\right|\leqq C\Vert Q_{0}z\Vert_{0}.
\]
 Together with the facts $\left|\left\langle Sz_{out},e\right\rangle _{0}\right|\leqq C\Vert Q_{0}z\Vert_{0}$
and $\left|\left\langle L,e\right\rangle _{0}\right|\leqq c_{2}e^{-c_{1}(s-s_{0})}$
we get 
\begin{align*}
\left\langle z(s),e\right\rangle _{0}-\left\langle z(s_{0}),e\right\rangle _{0} & \leqq\int_{s_{0}}^{s}\left[2C\Vert Q_{0}z(\mathfrak{x})\Vert_{0}+c_{2}e^{-c_{1}(\mathfrak{x}-s_{0})}\right]d\mathfrak{x}\\
 & \leqq2C\int_{s_{0}}^{s}\sqrt{2g_{0}(\mathfrak{x})}d\mathfrak{x}+\frac{c_{2}}{c_{1}}.
\end{align*}

Now a straightforward calculation using Lemma \ref{lem:g(s)<max=00007Bg(s0),g(s*)=00007D}
and the fact $\sqrt{\cosh u}<\sqrt{2}\cosh\left(\frac{u}{2}\right)$
finishes the proof.\end{proof}
\begin{rem}
By requiring that $\flat$ is sufficiently large, we can make $c_{2}$
sufficiently small. 
\end{rem}
Now let's estimate the derivatives of $z$.
\begin{lem}
\label{lem:higher derivative estimate}There exist $\delta=\delta(\beta)>0$,
$\flat=\flat(\beta)>0$ and $\bar{\kappa}=\bar{\kappa}(\beta)>0$
such that, if for multi-indices $\beta$

\[
\underset{(s,t)\in\theta}{\sup}\left|\partial^{\beta}z_{out}(s,t)\right|\leqq\delta,
\]
\[
a(s_{0},0)\geqq\flat,
\]
 and for those multi-indices $\beta$ with \textup{$|\beta|>0,$}
\[
\underset{(s,t)\in\theta}{\sup}\left|\partial^{\beta}(a(s,t)-Ts)\right|\leqq\delta,
\]
\[
\underset{(s,t)\in\theta}{\sup}\left|\partial^{\beta}(\vartheta(s,t)-t)\right|\leqq\delta,
\]
\[
\underset{(s,t)\in\theta}{\sup}\left|\partial^{\beta}z_{in}(s,t)\right|\leqq\delta,
\]
\textup{ then for $s\in[s_{0},\mathfrak{s}],$ we have}
\begin{align*}
\Vert\partial^{\beta}z(s)\Vert_{0} & \leqq C_{\beta}\underset{|\beta'|\leqq|\beta|}{\max}\left\{ \Vert Q_{0}\partial^{\beta'}z(s_{0})\Vert_{0},\Vert Q_{0}\partial^{\beta'}z(\mathfrak{s})\Vert_{0}\right\} \sqrt{\frac{\cosh\left(c_{1}\left(s-\frac{s_{0}+\mathfrak{s}}{2}\right)\right)}{\cosh\left(c_{1}\left(\frac{s_{0}-\mathfrak{s}}{2}\right)\right)}}\\
 & \quad+D_{\beta}(c_{2})\sqrt{\frac{\sinh(c(\mathfrak{s}-s))}{\sinh(c(\mathfrak{s}-s_{0}))}}+c_{2}e^{-c_{1}(s-s_{0})},
\end{align*}
 where
\[
\mathfrak{s}:=\sup\left\{ s\in[s_{0},s_{1}]:|\kappa_{0}(s')|\leqq\bar{\kappa}\mbox{ for all }s'\in[s_{0},s]\right\} ,
\]
 and $C_{\beta},c_{1}>0$ are constants independent of $s_{0}$ and
$s_{1}$, and $D_{\beta}(c_{2})$ is a function of $c_{2}$ independent
of $s_{0}$ and $s_{1}$, satisfying $\underset{c_{2}\to0}{\lim}C^{\beta}(c_{2})=0,$
and \textup{$l$ is the integer in Definition \ref{def: asympt cylindrical}.}\end{lem}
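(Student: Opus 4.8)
The plan is to induct on $m=|\beta|$, with the case $m=0$ supplied by Lemma \ref{lem:g''>cg-ce^{-cs}}, Lemma \ref{lem:g(s)<max{g(s0),g(s*)}} and Lemma \ref{lem: <z(s),e>-<z(s0),e>}. Fix $\beta$ with $1\le|\beta|\le l-2$, assume the asserted estimate for every $\beta'$ with $|\beta'|<|\beta|$, and set $w_\beta:=\partial^\beta(Q_0z)=Q_0\partial^\beta z$. The first step is to produce a linearized equation for $w_\beta$. Rather than differentiating the raw Cauchy--Riemann equation (\ref{eq:zs+Mzt+Szout}), I differentiate the already ``frozen'' form (\ref{eq:Qzs=00003DA0Qz+QDelta}), whose principal part $A_0$ has $s$-independent coefficients and whose remaining coefficients $\Delta_0,\tilde\Delta_0\kappa_0,\hat\Delta_0,\bar\Delta_0\kappa_0,A(s)-\bar A(s)$ are small. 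Applying $\partial^\beta$ and commuting it past $A_0$ (the commutator $[\partial_t,A_0]$ only contributes the $O(1)$ coefficients $(M_0)_t,(S_0)_t$ hitting \emph{strictly lower} order derivatives of $Q_0z$) gives an equation of the shape
\[
\partial_s w_\beta-A_0 w_\beta=R_\beta ,
\]
where $R_\beta$ collects: (i) coefficients of size $O(\delta+|\kappa_0|+c_2e^{-c_1(s-s_0)})$ times a derivative of $Q_0z$ of order $\le|\beta|+1$, the only order-$(|\beta|+1)$ one being $(w_\beta)_t$; (ii) $O(1)$ coefficients times a derivative of $Q_0z$ of order $\le|\beta|-1$; (iii) $\partial^\beta(Q_0L)$, which decays like $c_2e^{-c_1(s-s_0)}$. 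Here one checks, exactly as in the proof of Lemma \ref{lem:g''>cg-ce^{-cs}}, that every coefficient in (i) stays of size $O(\delta+|\kappa_0|)$ after at most $|\beta|$ derivatives: this follows from the chain rule together with the hypotheses ($\partial^{\beta'}$ of $a-Ts$, $\vartheta-Tt$, $z_{in}$, $z_{out}$ bounded by $\delta$ for $|\beta'|\le l$) and from the \emph{exponential} form of (AC3) in Definition \ref{def: asympt cylindrical}, which is what makes $\partial^\beta(Q_0L)$ and all the $\partial_a$-type terms decay exponentially. This also explains the loss of two derivatives: $M,S$ already contain one derivative of $J$, and the second $s$-differentiation below costs one more, so $|\beta|+2\le l$.

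The second step is to estimate $g_\beta(s):=\tfrac12\|w_\beta(s)\|_0^2$ by repeating the eight-term computation from the proof of Lemma \ref{lem:g''>cg-ce^{-cs}} verbatim. From $g_\beta''(s)\ge\langle\partial_s^2w_\beta,w_\beta\rangle_0$ one differentiates the $w_\beta$-equation once more in $s$, substitutes every occurrence of $\partial_sw_\beta$ using the equation itself, uses that $A_0$ is self-adjoint for $\langle\cdot,\cdot\rangle_0$ to generate the positive term $\|A_0w_\beta\|_0^2$, and uses the coercivity estimate of Lemma \ref{lem:|AQxi|>C|Qxi|+C|Qxi|t} to control $(w_\beta)_t$ and thereby absorb all contributions of type (i). The type-(ii) contributions involve only $\partial^{\beta'}(Q_0z)$ with $|\beta'|<|\beta|$, and the $P_0$-parts of the corresponding $\partial^{\beta'}z$ are handled by Lemma \ref{lem: <z(s),e>-<z(s0),e>}; hence by the induction hypothesis they are bounded on $[s_0,\mathfrak s]$ by a fixed combination of $\cosh/\cosh$- and $\sinh/\sinh$-type terms whose square I denote $F_\beta(s)$. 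After choosing $\delta,\bar\kappa$ small and $\flat$ large (so $c_2$ is small), one obtains on $[s_0,\mathfrak s]$
\[
g_\beta''(s)\ge c^2 g_\beta(s)-F_\beta(s)-c_2e^{-c_1(s-s_0)} .
\]

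The third step is to solve this differential inequality by comparison, exactly as in the proof of Lemma \ref{lem:g(s)<max{g(s0),g(s*)}}: one writes an explicit solution $h_\beta$ of $h_\beta''-c^2h_\beta=-F_\beta-c_2e^{-c_1(s-s_0)}$ with $h_\beta(s_0)=h_\beta(\mathfrak s)=\max\{g_\beta(s_0),g_\beta(\mathfrak s)\}$ up to nonnegative corrections, and the maximum principle applied to $g_\beta-h_\beta$ gives $g_\beta\le h_\beta$; the homogeneous part yields the $\cosh/\cosh$ factor, the particular solutions yield the $\sqrt{\sinh/\sinh}$ and $e^{-c_1(s-s_0)}$ terms with $F_\beta\to0$ as $c_2\to0$ forcing $C^\beta(c_2)\to0$, after normalizing the rates to the common $c_1>c$. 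This gives the bound for $\|w_\beta\|_0=\sqrt{2g_\beta}$. Finally $\|\partial^\beta z\|_0\le\|w_\beta\|_0+\|\partial^\beta(P_0z)\|_0$, and the last term is reduced to lower order: if $\beta$ has a $t$-derivative then $\|\partial^\beta(P_0z)\|_0\le C\|\partial^{\beta-(0,1)}z\|_0$, since $P_0$ only picks up the $(M_0)_t$-part under a $\partial_t$; if $\beta=(\beta_1,0)$ one uses the Cauchy--Riemann equation (\ref{eq:zs+Mzt+Szout}) repeatedly to trade the $s$-derivatives for $t$-derivatives, $z_{out}$-derivatives (covered, since $Q_0$ is the identity on the out-component) and $L$-derivatives (exponentially small), with base case $\|P_0z\|_0$ given by Lemma \ref{lem: <z(s),e>-<z(s0),e>}.

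The main obstacle is the bookkeeping of the first step: one must organize the terms produced by $\partial^\beta$ so that every $O(1)$-coefficient, top-order contribution is recognized either as part of $A_0w_\beta$ or as a commutator that is genuinely lower order in $|\beta|$, and so that no coefficient-derivative that ought to be small is overlooked. This is the only place where the exponential strength of (AC3) and the full $\delta$-bounds up to order $l$ are essential; once the equation for $w_\beta$ is correctly set up, the analytic estimates are obtained by literally copying the arguments of Lemma \ref{lem:g''>cg-ce^{-cs}} and Lemma \ref{lem:g(s)<max{g(s0),g(s*)}}.
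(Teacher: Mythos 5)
Your high-level strategy --- derive an evolution equation of the same type as (\ref{eq:zs=00003DA0z+...for higher derivative estimates}) for a higher-order quantity, rerun the eight-term convexity computation of Lemma \ref{lem:g''>cg-ce^{-cs}}, close with the comparison argument of Lemma \ref{lem:g(s)<max{g(s0),g(s*)}}, and treat the $P_{0}$-component separately via the projected equation and the Cauchy--Riemann relation --- is exactly the paper's strategy. But there is a genuine gap in your first step. If you differentiate by a raw $\partial^{\beta}=\partial_{s}^{\beta_{1}}\partial_{t}^{\beta_{2}}$, the commutator of $\partial_{t}^{\beta_{2}}$ with the $t$-dependent principal part $A_{0}w=-M_{0}(t)w_{t}-S_{0}(t)w_{out}$ produces, among other terms, $-\beta_{2}(M_{0})_{t}\,\partial^{\beta}z$ (one $t$-derivative lands on $M_{0}$, the remaining $\beta_{2}-1$ together with the intrinsic $\partial_{t}$ land on $z$). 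This is a derivative of order exactly $|\beta|$ --- the same order as your unknown $w_{\beta}$ --- carrying the $O(1)$ coefficient $(M_{0})_{t}$. It fits neither your class (i) (the coefficient is not $O(\delta+|\kappa_{0}|)$, so it cannot be absorbed into $\Vert A_{0}w_{\beta}\Vert_{0}^{2}$ via Lemma \ref{lem:|AQxi|>C|Qxi|+C|Qxi|t}, whose coercivity constant is not quantified and need not dominate $\sup|(M_{0})_{t}|$) nor your class (ii) (it is not of order $\leqq|\beta|-1$, so the induction hypothesis does not reach it). An $O(1)$ zeroth-order perturbation of $\partial_{s}-A_{0}$ can destroy the spectral gap on which the inequality $g_{\beta}''\geqq c^{2}g_{\beta}-\cdots$ rests, and your parenthetical claim that the commutator hits only ``strictly lower order'' derivatives is precisely where this is swept under the rug.

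The paper's proof is built to avoid this: it forms $\mathcal{W}=\left(Q_{0}z,\partial_{s}(Q_{0}z),A_{0}Q_{0}z,\partial_{s}(A_{0}Q_{0}z)\right)$, i.e.\ it differentiates only with $\partial_{s}$ (which commutes with $A_{0}$ exactly, since $M_{0},S_{0}$ are $s$-independent) and with $A_{0}$ itself. The resulting system for $\mathcal{W}$ is then literally of the same form as (\ref{eq:zs=00003DA0z+...for higher derivative estimates}), with all perturbation coefficients still of size $O(\delta+|\kappa_{0}|)$ or exponentially decaying, so the three scalar lemmata apply verbatim with no inhomogeneous forcing; the $t$-derivatives are recovered only afterwards, algebraically, from $z_{t}=M_{0}A_{0}Q_{0}z+M_{0}Q_{0}S_{0}z_{out}$, and $z_{s}$ from $z_{s}=P_{0}z_{s}+Q_{0}z_{s}$ as you do. To repair your argument you should either adopt this choice of differentiation operators or exhibit a structural cancellation for the $(M_{0})_{t},(S_{0})_{t}$ commutator terms, which I do not see. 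A lesser point: your inhomogeneous comparison ODE resonates when $F_{\beta}$ contains $\cosh(c\,\cdot)$ at the same rate $c$ as the homogeneous solution; your remark about adjusting rates suggests you can fix this, but the paper's homogeneous formulation avoids the issue entirely.
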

\begin{proof}
Let's prove the estimate for $|\beta|=1$. The proof of estimates
of higher derivatives is almost the same. Refer to Lemma A.6 in \cite{compactness}
for the estimates for all derivatives in the Cylindrical case.

Equation (\ref{eq:zs=00003DA0z+Delta-L}) can be rewritten as
\begin{equation}
z_{s}=A_{0}z+\dot{\Delta}z_{t}+\ddot{\Delta}z_{out}+\dddot{\Delta}z-L,\label{eq:zs=00003DA0z+...for higher derivative estimates}
\end{equation}
 with $\dot{\Delta}=\Delta_{0}+\tilde{\Delta}_{0}\kappa_{0},$ $\ddot{\Delta}=\hat{\Delta}_{0}+\bar{\Delta}_{0}\kappa_{0},$
and $\dddot{\Delta}=[A(s)-\bar{A}(s)].$ Denote $\mathcal{W}:=\left(Q_{0}z,\frac{\partial}{\partial s}\left(Q_{0}z\right),A_{0}Q_{0}z,\frac{\partial}{\partial s}\left(A_{0}Q_{0}z\right)\right),$
then $\mathcal{W}$ satisfies 

\[
\mathcal{W}_{s}=\mathcal{A}_{0}\mathcal{W}+\mathcal{Q}_{0}\mathbf{\dot{\Delta}}\mathcal{W}_{t}+\mathcal{Q}_{0}\mathbf{\ddot{\Delta}}\mathcal{W}_{out}+\mathbf{\dddot{\Delta}}\mathcal{W}-\mathbf{\mathcal{L}},
\]
 where $\mathcal{A}_{0}=diag(A_{0},A_{0},A_{0},A_{0})$, $\mathcal{Q}_{0}=diag(Q_{0},Q_{0},Q_{0},Q_{0})$,
and $\mathbf{\dot{\Delta}},\mathbf{\ddot{\Delta}},\mathbf{\dddot{\Delta}},\mathbf{\mathcal{L}}$
satisfy similar estimates as $\dot{\Delta},\ddot{\Delta},\dddot{\Delta},L$
respectively. Indeed, for $|\beta|=1$ we can derive this equation
by direct computation. For general $\beta,$ we can derive this by
induction on $|\beta|$. This equation is of the same type as the
equation (\ref{eq:zs=00003DA0z+...for higher derivative estimates}).
Copying the proofs of Lemma \ref{lem:g''>cg-ce^=00007B-cs=00007D},
Lemma \ref{lem:g(s)<max=00007Bg(s0),g(s*)=00007D} and Lemma \ref{lem: <z(s),e>-<z(s0),e>},
we can get the desired estimate for $\mathcal{W}$. In particular,
we get the estimates for $(Q_{0}z)_{s}$ and $A_{0}Q_{0}z.$

From the equation $z_{t}=M_{0}A_{0}Q_{0}z+M_{0}Q_{0}S_{0}z_{out}$
we get the estimate for $z_{t}$. Applying $P_{0}$ to the equation
(\ref{eq:zs=00003DA0z+...for higher derivative estimates}), we get
\[
(P_{0}z)_{s}=P_{0}\dot{\Delta}z_{t}+P_{0}\ddot{\Delta}z_{out}+P_{0}\dddot{\Delta}z-P_{0}L.
\]
\textcolor{red}{{} }This equation together with the estimate of $\dddot{\Delta}z$
(See formula \ref{eq:bar=00007BA=00007D-A}) gives us the desired
estimate for $P_{0}z_{s}.$ Then the estimate for $z_{s}$ follows
from $z_{s}=P_{0}z_{s}+Q_{0}z_{s}$.
\end{proof}

\begin{lem}
\label{lem:exponential estimate for a and theta}Let $\vartheta_{0}=\int_{0}^{1}\left[\vartheta\left(\frac{s_{0}+\mathfrak{s}}{2},t\right)-Tt\right]dt,$
$a_{0}=\int_{0}^{1}\left[a\left(\frac{s_{0}+\mathfrak{s}}{2},t\right)-Ts_{0}\right]dt$,
$\tilde{a}=a(s,t)-Ts-a_{0}$ and $\tilde{\vartheta}=\vartheta(s,t)-Tt-\vartheta_{0}$.
Under the assumption of Lemma \ref{lem:higher derivative estimate},
we have for $s\in[s_{0},\mathfrak{s}]$ and all multi index $\beta$

\begin{align*}
 & \Vert\partial^{\beta}\left(\tilde{a}(s,t\right)\Vert^{2},\left\Vert \partial^{\beta}\left(\tilde{\vartheta}(s,t)\right)\right\Vert ^{2}\\
\leqq & C_{1}\underset{|\beta'|\leqq|\beta|+3}{\max}\{\Vert Q_{0}\partial^{\beta'}z(s_{0})\Vert_{0}^{2},\Vert Q_{0}\partial^{\beta'}z(\mathfrak{s})\Vert_{0}^{2}\}\\
 & +C_{1}\max\left\{ \left\Vert \tilde{a}(s_{0},\cdot)\right\Vert ^{2}+\left\Vert \tilde{\vartheta}(s_{0},\cdot)\right\Vert ^{2},\left\Vert \tilde{a}(\mathfrak{s},\cdot)\right\Vert ^{2}+\left\Vert \tilde{\vartheta}(\mathfrak{s},\cdot)\right\Vert ^{2}\right\} +o(c_{2}),
\end{align*}
where the norm $\left\Vert \cdot\right\Vert $ is $L^{2}$-norm, $o(c_{2})$
satisfies $\underset{c_{2}\to0}{\lim}o(c_{2})=0,$ and $C_{1}$ is
a constant independent of $\tilde{u}.$\end{lem}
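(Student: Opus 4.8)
The plan is to treat $(a,\vartheta)$ exactly as $z$ was treated in Lemmas \ref{lem:g''>cg-ce^{-cs}}--\ref{lem:higher derivative estimate}, by turning equations (\ref{eq:a' with respect to s}) and (\ref{eq:a' with respect to t}) into a perturbed Cauchy--Riemann equation for $\Phi:=\tilde a+\sqrt{-1}\,\tilde\vartheta$. Substituting $w:=a-Ts$, $\psi:=\vartheta-Tt$ (so $a_{s}-\vartheta_{t}=w_{s}-\psi_{t}$ and $a_{t}+\vartheta_{s}=w_{t}+\psi_{s}$), those equations read $w_{s}-\psi_{t}=-(Bz_{out}+B'z_{t}+N)$ and $w_{t}+\psi_{s}=-(Cz_{out}+C'z_{s}+O)$, i.e.
\[
\bar\partial\Phi=G,\qquad 2G:=-\bigl(Bz_{out}+B'z_{t}+N\bigr)-\sqrt{-1}\,\bigl(Cz_{out}+C'z_{s}+O\bigr)
\]
(the constants $a_{0},\vartheta_{0}$ do not affect $\bar\partial\Phi$). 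The key point is that, in the chart of Lemma \ref{lem:choosing nice coordinate}, $\mathbf R_{\infty}=\tfrac{\partial}{\partial\vartheta}$, so $\tfrac{\partial}{\partial\vartheta}-\mathbf R=-(\mathbf R-\mathbf R_{\infty})$ and $\pi_{a}\mathbf R=\pi_{a}(\mathbf R-\mathbf R_{\infty})$ are exponentially small in $a\geqq Ts-\delta\geqq(\flat-\delta)+\tfrac T2(s-s_{0})$ by (AC3); hence the pieces of $N,O$ surviving at $z_{out}=0$ are exponentially small --- exactly as $L$ is in (\ref{eq:zs+Mzt+Szout}), and for the same reason, cf.\ the footnote there --- so that $G$ depends only on $z_{out},z_{s},z_{t}$ up to a term $\leqq c_{2}e^{-c_{1}(s-s_{0})}$. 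Consequently, by Lemmas \ref{lem:g(s)<max{g(s0),g(s*)}} and \ref{lem:higher derivative estimate},
\[
\|\partial^{\beta}G(s,\cdot)\|_{0}\leqq C\!\!\max_{|\beta'|\leqq|\beta|+3}\!\bigl\{\|Q_{0}\partial^{\beta'}z(s_{0})\|_{0},\|Q_{0}\partial^{\beta'}z(\mathfrak s)\|_{0}\bigr\}\sqrt{\tfrac{\cosh(\cdots)}{\cosh(\cdots)}}+C^{\beta}(c_{2})\sqrt{\tfrac{\sinh(\cdots)}{\sinh(\cdots)}}+c_{2}e^{-c_{1}(s-s_{0})}
\]
for $|\beta|\leqq l-3$ (the derivative loss stems from $G$ being first order in $z$ and from the auxiliary vector $\mathcal W$ of Lemma \ref{lem:higher derivative estimate}).

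Next I would expand $\Phi$ in Fourier modes on $\mathbb R/\mathbb Z$ and split $\Phi=\bar\Phi(s)+\Phi^{\perp}(s,t)$, $\bar\Phi$ the zeroth mode; choosing $a_{0},\vartheta_{0}$ as the $t$-means at $s=\tfrac{s_{0}+\mathfrak s}{2}$ makes $\bar\Phi(\tfrac{s_{0}+\mathfrak s}{2})=0$. On each nonzero mode $\bar\partial$ is the first-order operator $\tfrac12(\partial_{s}-2\pi k)$ whose homogeneous solutions $e^{2\pi k s}$ decay or grow at rate $\geqq 2\pi$; solving the two-point problem on $[s_{0},\mathfrak s]$ by variation of parameters against this exponential kernel and inserting the bound on $G^{\perp}$ repeats the proofs of Lemmas \ref{lem:g''>cg-ce^{-cs}}--\ref{lem:g(s)<max{g(s0),g(s*)}} verbatim and gives
\[
\|\Phi^{\perp}(s,\cdot)\|^{2}\leqq\max\{\|\Phi^{\perp}(s_{0},\cdot)\|^{2},\|\Phi^{\perp}(\mathfrak s,\cdot)\|^{2}\}\tfrac{\cosh(\cdots)}{\cosh(\cdots)}+C\!\!\max_{|\beta'|\leqq|\beta|+3}\!\{\|Q_{0}\partial^{\beta'}z(s_{0})\|_{0}^{2},\|Q_{0}\partial^{\beta'}z(\mathfrak s)\|_{0}^{2}\}+o(c_{2}),
\]
together with $\|\Phi^{\perp}(s_{\ast},\cdot)\|^{2}\leqq\|\tilde a(s_{\ast},\cdot)\|^{2}+\|\tilde\vartheta(s_{\ast},\cdot)\|^{2}$ for $s_{\ast}\in\{s_{0},\mathfrak s\}$. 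The zeroth mode satisfies only the first-order ODE $\bar\Phi'(s)=2\,\overline{G_{0}}(s)$, so $\|\bar\Phi(s)\|\leqq\int_{s_{0}}^{\mathfrak s}\|\overline{G_{0}}\|$, and this integral is finite: the $c_{2}e^{-c_{1}(\sigma-s_{0})}$ part integrates to $\leqq c_{2}/c_{1}$, and the $\sqrt{\cosh/\cosh}$-part integrates to $\leqq\tfrac Cc\max\{\|Q_{0}z(s_{0})\|_{0},\|Q_{0}z(\mathfrak s)\|_{0}\}$ --- precisely the computation already carried out inside the proof of Lemma \ref{lem: <z(s),e>-<z(s0),e>}. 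Adding the two pieces and squaring yields the claim for $|\beta|=0$; for $|\beta|\geqq1$ one differentiates $\bar\partial\Phi=G$ repeatedly, so $\partial^{\beta}\Phi$ solves a $\bar\partial$-equation of the same type (inhomogeneity $\partial^{\beta}G$ plus lower-order terms $(\partial^{\gamma}\text{coeff})\,\partial^{\beta-\gamma}\Phi$, $|\gamma|\geqq1$, disposed of by induction on $|\beta|$), and the mode-by-mode argument repeats, giving the range $|\beta|\leqq l-3$.

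The crux --- and the one place where being only asymptotically cylindrical costs real work --- is the zeroth Fourier mode $\bar\Phi=\overline a(s)-Ts-a_{0}+\sqrt{-1}(\overline\vartheta(s)-\vartheta_{0})$. The nonzero modes have the spectral gap $|2\pi k|\geqq 2\pi$ and hence decay exponentially for free; $\bar\Phi$ has no such gap and satisfies only a first-order ODE, so the whole estimate hinges on the convergence of $\int_{s_{0}}^{\infty}\|\overline{G_{0}}\|$, which in turn hinges on two inputs absent from the naive cylindrical argument: the \emph{exponential} smallness of the ``constant'' coefficients $N,O$ at $z_{out}=0$ --- this is exactly what (AC3)--(AC4) buy, and establishing it (via $\mathbf R_{\infty}=\tfrac{\partial}{\partial\vartheta}$ in the chart) is the technical heart, the same phenomenon flagged in the footnote after (\ref{eq:zs+Mzt+Szout}) --- and the sharp \emph{integrated} $z$-estimate already extracted in Lemma \ref{lem: <z(s),e>-<z(s0),e>}. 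Everything else is a transcription of the proofs of Lemmas \ref{lem:g''>cg-ce^{-cs}}--\ref{lem:higher derivative estimate}.
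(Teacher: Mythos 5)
Your proposal is correct and follows essentially the route the paper itself prescribes: the paper's proof consists of the single remark that one should adapt Lemmata 3.8--3.13 of \cite{Finite energy cylinders of small area} as in the proof of Lemma \ref{lem:g''>cg-ce^{-cs}} and invoke Lemma \ref{lem:higher derivative estimate}, which is exactly the $\bar\partial\Phi=G$ reduction, nonzero-mode spectral gap, and zeroth-mode integration that you carry out, with the same key new ingredient (the exponential smallness of the $z_{out}=0$ parts of $N$ and $O$ forced by (AC3), paralleling the footnote after (\ref{eq:zs+Mzt+Szout})). You in fact supply more detail than the paper does, and your identification of the zeroth mode as the only place where the asymptotically cylindrical hypothesis does real work is accurate.
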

\begin{proof}
We can modify the proofs of Lemmata 3.8-3.13 in \cite{Finite energy cylinders of small area}
in the obvious way similar to what we did in the proof of Lemma \ref{lem:g''>cg-ce^=00007B-cs=00007D}
and then use Lemma \ref{lem:higher derivative estimate} to prove
this lemma. We omit the proof here, since essentially it is not new.
\footnote{The proof of Proposition 3.4 in \cite{morse bott} is inaccurate,
and this lemma fills in the gap.%
}\end{proof}
\begin{rem}
When $\mathfrak{s}$ is infinity, we can get a better exponential
decay estimate using the same proof, and in that case the term $o(c_{2})$
can be replaced by $c_{2}e^{-(s-s_{0})}$. 
\end{rem}

Now we are ready to prove Theorem \ref{thm:converge to reeb orbit}.
\begin{proof}
Let's follow the proof in \cite{morse bott}. By Theorem \ref{thm:subsequence convergence to Reeb},
we can find a sequence $s_{0m}\to\infty$ such that 

\[
\underset{m\to\infty}{\lim}u(s_{0m},t)=\gamma(Tt)
\]
\[
\underset{m\to\infty}{\lim}a(s_{0m},t)=\pm\infty
\]
for some $T$-periodic orbit $\gamma$ of $\mathbf{R}_{\infty}$.
From the proof of Theorem \ref{thm:subsequence convergence to Reeb},
we can further require for any multi-indices $\alpha$ with $|\alpha|>0$
we have $\underset{t\in S^{1}}{\sup}\Vert\partial^{\alpha}z(s_{0m},t)\Vert\to0$
as $m\to+\infty$.

Given $\sigma>0,$ let $\zeta_{m}>0$ be the largest number such that
$u(s,t)\in S^{1}\times[-\sigma,\sigma]^{2n}$ for all $s\in[s_{0m},s_{0m}+\zeta_{m}]$.
Let $\theta_{m}:=[s_{0m},s_{0m}+\zeta_{m}]\times S^{1}$ and $\kappa_{0m}(s):=(\vartheta(s_{0m},0)-\vartheta(s,0),z_{in}(s_{0m},0)-z_{in}(s,0)).$
Now we can define the operator $A_{0m}$ similar as before in the
obvious way.

By Corollary \ref{coro:a(s,t)-Ts  and zout converges to 0.}, given
$\delta>0$ we have 
\[
\underset{(s,t)\in\theta_{m}}{\sup}\left|\partial^{\beta}(a(s,t)-Ts)\right|\leqq\delta
\]
for those multi-indices $\beta$ with $|\beta|>0,$ when $m$ is large.
This implies $a(s_{0m},0)\to+\infty,$ as $m\to+\infty$. Notice that
the other requirements in the Lemma \ref{lem:g''>cg-ce^=00007B-cs=00007D}
and Lemma \ref{lem:higher derivative estimate} are also satisfied,
i.e. given $\delta>0$, there exists $m_{\delta}$ such that for $m>m_{\delta}$
we have 

\[
\underset{(s,t)\in\theta_{m}}{\sup}\left|\partial^{\beta}z_{out}(s,t)\right|\leqq\delta
\]
 for multi-indices $\beta$, and 

\begin{equation}
\underset{(s,t)\in\theta_{m}}{\sup}\left|\partial^{\beta}(\vartheta(s,t)-Tt)\right|\leqq\delta\label{eq:derivative of (theta(s,t)-t) converges to 0}
\end{equation}
\[
\underset{(s,t)\in\theta_{m}}{\sup}\left|\partial^{\beta}z_{in}(s,t)\right|\leqq\delta
\]
 for those multi-indices $\beta$ with $|\beta|>0$. Indeed, if $\{(s_{m_{k}},t_{m_{k}})\}$
violates one of these properties, we can define $\tilde{u}_{m_{k}}(s,t)$
to be $(a(s-s_{m_{k}},t-t_{m_{k}})-a(s_{m_{k}},t_{m_{k}}),u(s-s_{m_{k}},t-t_{m_{k}}))$.
By Ascoli-Arzela, we can extract a subsequence, still called $\tilde{u}_{m_{k}}(s,t),$
such that $\tilde{u}_{m_{k}}(s,t)$ converges in $C_{loc}^{\infty}$
to a $J_{\infty}$-holomorphic cylinder $\tilde{u}_{\infty}$ over
a periodic orbit $\gamma'\in N.$ Since $\tilde{u}_{\infty}$ must
satisfy those three properties, we get a contradiction.

By construction $|\left\langle z(s_{0m}),e\right\rangle _{0m}|\to0$
and $\Vert Q_{0m}\partial^{\alpha}z(s_{0m})\Vert\to0,$ for all multi-index
$\alpha$ with $|\alpha|\geqq0.$ Let $\bar{\kappa}_{m}$ be the ``$\bar{\kappa}$''
in Lemma \ref{lem:g''>cg-ce^=00007B-cs=00007D} and Lemma \ref{lem:higher derivative estimate}
applied to $\tilde{u}|_{\theta_{m}}$ and let $\mathfrak{s}_{m}:=\sup\left\{ s\in[s_{0m},s_{0m}+\zeta_{m}]:|\kappa_{0m}(s')|\leqq\bar{\kappa}_{m}\mbox{ for all }s'\in[s_{0},s]\right\} $,
and notice that actually $\bar{\kappa}_{m}$ can be chosen independent
of $m$. We can extract a subsequence so that $u(\mathfrak{s}_{m},t)$
converges to a closed Reeb orbit $\gamma''\in N$. Therefore, $\Vert Q_{0m}\partial^{\alpha}z(\mathfrak{s}_{m})\Vert\to0$,
for all multi-indices $\alpha$ with $|\alpha|\geqq0$. Since $\left\langle z(\mathfrak{s}_{m}),e\right\rangle _{0}\to0$
and $\underset{t\in S^{1}}{\sup}\left|\frac{\partial}{\partial t}z_{in}(\mathfrak{s}_{m},t)\right|\to0,$
we obtain $\underset{t\in S^{1}}{\sup}\left|z_{in}(\mathfrak{s}_{m},t)\right|\to0$.
By Lemma \ref{lem:g''>cg-ce^=00007B-cs=00007D} and Lemma \ref{lem:higher derivative estimate},
we have 

\begin{equation}
\underset{s\in[s_{0m},\mathfrak{s}_{m}]}{\sup}\Vert\partial^{\beta}z(s)\Vert_{0m}\to0\label{eq:W1,k norm of z-l}
\end{equation}
 for $|\beta|\leqq k$. Therefore, 
\begin{align*}
 & \underset{(s,t)\in[s_{0m},\mathfrak{s}_{m}]\times S^{1}}{\sup}|z_{in}(s,t)|\\
\leqq & \underset{s\in[s_{0m},\mathfrak{s}_{m}]}{\sup}\Vert z_{in}(s,\cdot)\Vert_{C^{0}(S^{1})}\\
\leqq & C\underset{s\in[s_{0m},\mathfrak{s}_{m}]}{\sup}\Vert z_{in}(s,\cdot)\Vert_{W^{1,2}(S^{1})}\\
\leqq & C_{1}\left\{ \underset{s\in[s_{0m},\mathfrak{s}_{m}]}{\sup}\left\Vert \frac{\partial}{\partial t}z_{in}(s,\cdot)\right\Vert _{0l}+\underset{s\in[s_{0m},\mathfrak{s}_{m}]}{\sup}\Vert z_{in}(s,\cdot)\Vert_{0m}\right\} \\
\to & 0.
\end{align*}
Lemma \ref{lem:exponential estimate for a and theta} and formula
(\ref{eq:derivative of (theta(s,t)-t) converges to 0}) imply $|\vartheta(\mathcal{\mathfrak{s}}_{m},0)-\vartheta(s_{0m},0)|\to0,$
as $m\to\infty.$ Thus, we have $\mathfrak{s}_{m}=s_{0m}+\zeta_{m}$
for $m$ large enough, and 
\[
\underset{(s,t)\in[s_{0m},s_{0m}+\zeta_{m}]\times S^{1}}{\sup}|z(s,t)|\to0
\]
 as $m\to\infty$. Therefore, $\zeta_{m}=+\infty$ for $m$ large.
\end{proof}
Furthermore, we can show that the convergence of $J$-holomorphic
curve is exponentially fast. Let's prove Theorem \ref{thm: exponential convergence}.
\begin{proof}
Now with the help of the previous lemmata, the proof of the third
inequality is almost evident. Indeed, since $\mathfrak{s}=+\infty$,
Lemma \ref{lem:g(s)<max=00007Bg(s0),g(s*)=00007D} becomes $g_{0}(s)\leqq\left(g_{0}(s_{0})+\frac{c_{2}}{c_{1}^{2}-c^{2}}\right)e^{-c(s-s_{0})}.$
Consequently, in the proof Lemma \ref{lem: <z(s),e>-<z(s0),e>}, we
can get 
\begin{align*}
\left|\left\langle z(s),e\right\rangle _{0}\right| & \leqq\int_{s}^{+\infty}\left[2C\Vert Q_{0}z(\mathfrak{x})\Vert_{0}+c_{2}e^{-c_{1}(\mathfrak{x}-s_{0})}\right]d\mathfrak{x}\leqq C'e^{-c(s-s_{0})},
\end{align*}
 where $C'$ is independent of $s.$ Similarly, we can get the corresponding
statement of Lemma \ref{lem:higher derivative estimate} for $\mathfrak{s}=+\infty.$ 

The proof for the rest is a straightforward modification of the original
proof in \cite{Finite energy plane}.
\end{proof}
So far we studied the behaviors of a finite energy $J$-holomorphic
curve whose domain is an infinite cylinder. In order to compactify
the moduli space of holomorphic curves, we also need to understand
the behavior of a finite energy $J$-holomorphic curve whose domain
is a long but finite interval and whose $\omega$ energy is small.
To do that, we need the following Bubbling Lemma.
\begin{lem}
\label{lem:Bubbling-Lemma}(Bubbling Lemma \cite{compactness,Hofer viterbo})
Let $J^{0}$ be a cylindrical almost complex structure on $W=\mathbb{R}^{+}\times V$.
There exists a constant $\hbar>0$ depending only on $(W,J^{0},\omega^{0})$
where $J^{0}=J_{\infty}^{0}$ and $\omega^{0}=\omega_{\infty}^{0}$
(See Definition \ref{def: asympt cylindrical}, \ref{def:cylindrical almost complex}
and Section \ref{sub:Energy-of--holomorphic}), so that the following
holds true. Let $(J^{n},\omega_{\infty}^{n})$ be a sequence of pairs
satisfying (AC1)-(AC5) on $W$ and converging to $(J^{0},\omega^{0})$
in $C_{loc}^{\infty}$-sense. Consider a sequence of $J^{n}-$holomorphic
maps $\tilde{u}_{n}=(a_{n},u_{n})$ from the unit disc $B(0,1)$ to
$W$ satisfying $E_{n}(\tilde{u}_{n})=E_{\omega^{n}}(\tilde{u}_{n})+E_{\lambda_{n}}(\tilde{u}_{n})\leqq C$
(See Section \ref{sub:Energy-of--holomorphic}) for some constant
$C$, such that the sequence $a_{n}(0)$ are bounded, and such that
$\left\Vert \nabla\tilde{u}_{n}(0)\right\Vert \to+\infty$ as $n\to+\infty$.
Then there exist a sequence of points $z_{n}\in B(0,1)$ converging
to $0,$ sequences of positive numbers $\varepsilon_{n}$ and $R_{n}$
satisfying

\[
\begin{array}{cc}
\varepsilon_{n}\to0,\qquad & R_{n}\to+\infty,\\
\varepsilon_{n}R_{n}\to+\infty,\qquad & |z_{n}|+\varepsilon_{n}<1,
\end{array}
\]
 such that the rescaled maps 

\[
\tilde{u}_{n}^{0}:B(0,\varepsilon_{n}R_{n})\to W
\]
\[
z\mapsto\tilde{u}_{n}(z_{n}+R_{n}^{-1}z)
\]
 converge in $C_{loc}^{1}$ to a $J_{0}$-holomorphic map $\tilde{u}^{0}:\mathbb{C}\to W$
which satisfies $E(\tilde{u}^{0})\leqq C$ and $E_{\omega^{0}}(\tilde{u}^{0})>\hbar.$

Moreover, this map is either a $J_{0}$-holomorphic plane asymptotic
as $|z|\to\infty$ to a periodic orbit of the vector field $\mathbf{R}^{0}$
defined by $\mathbf{R}^{0}=J_{0}\left(\frac{\partial}{\partial r}\right),$
or extendable to a $J_{0}$-holomorphic sphere $\mathbb{P}^{1}\to W$
by Gromov's Removable of Singularity theorem.

Similar statement is also true for $\mathbb{R}^{-}\times V.$\end{lem}
\begin{proof}
See \cite{Hofer viterbo}.
\end{proof}
The following theorem studies the behavior of a long cylinder having
small $\omega$-area. It is needed in order to prove the compactness
results for the moduli space of $J$-holomorphic curves in the Symplectic
Field Theory. Refer to \cite{Finite energy cylinders of small area,compactness}
for the cylindrical case.
\begin{thm}
\label{thm: holomorphic cylinder with small area lies}Suppose that
$J$ is an asymptotically cylindrical almost complex structure on
$W=\mathbb{R}^{\pm}\times V$ at $\pm\infty$. Suppose that $J$ is
of the Morse-Bott type. Given $E_{0}>0$ and $\varepsilon>0$, there
exist constants $\sigma,c>0$ such that for every $R>c$ and every
$J$-holomorphic cylinder $\tilde{u}=(a,u):[-R,R]\times S^{1}\to W$
satisfying the inequalities $E_{\omega}(\tilde{u})<\sigma$ and $E(\tilde{u})<E_{0},$
we have $u(s,t)\in B_{\varepsilon}(u(0,t))$ for all $s\in[-R+c,R-c]$
and all $t\in S^{1}$. \end{thm}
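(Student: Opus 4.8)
The plan is to argue by contradiction, running in the asymptotically cylindrical setting the scheme of \cite{Finite energy cylinders of small area} and feeding in the uniform estimates of Subsection \ref{sub:Exponential-decay}. If the statement fails, there are $E_{0},\varepsilon>0$ and sequences $R_{n}\to+\infty$, $\sigma_{n}\to0$ together with $J$-holomorphic cylinders $\tilde{u}_{n}=(a_{n},u_{n}):[-R_{n},R_{n}]\times S^{1}\to W$ satisfying $E_{\omega}(\tilde{u}_{n})<\sigma_{n}$, $E(\tilde{u}_{n})<E_{0}$, and points $(s_{n}^{*},t_{n}^{*})$ with $|s_{n}^{*}|\leqq R_{n}-n$ and $d_{V}(u_{n}(s_{n}^{*},t_{n}^{*}),u_{n}(0,t_{n}^{*}))\geqq\varepsilon$. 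Replacing $s_{n}^{*}$ by the value closest to $0$ at which such a $t$ exists, we may also assume that $u_{n}(s,\cdot)$ stays in the $\varepsilon$-neighbourhood of $u_{n}(0,\cdot)$ for all $s$ strictly between $0$ and $s_{n}^{*}$.

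First I would establish a uniform gradient bound $\Vert\nabla\tilde{u}_{n}\Vert_{C^{0}([-R_{n}+2,R_{n}-2]\times S^{1})}\leqq C$ for $n$ large, by the bubbling-off argument already used for Proposition \ref{pro:gradient bound for finite energy curve}. Were it to fail, choose interior blow-up points $w_{n}$; after translating the domain so that $w_{n}\to 0$ and, when $\{a_{n}(w_{n})\}$ is unbounded, translating the target by $f_{-a_{n}(w_{n})}$, the Bubbling Lemma \ref{lem:Bubbling-Lemma} --- applied to the constant sequence $(J,\omega)$, respectively to the translated sequence, which converges in $C^{1}_{loc}$ to $(J_{\pm\infty},\omega_{\pm\infty})$ by (AC3)--(AC4) and satisfies (AC1)--(AC7) by (AC5)--(AC7) --- produces a nonconstant $J_{*}$-holomorphic plane or sphere with $E_{\omega_{*}}>\hbar$, where $\omega_{*}\in\{\omega,\omega_{\pm\infty}\}$. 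But the $\omega$-mass of $\tilde{u}_{n}$ over small discs goes to $0$ with $\sigma_{n}$, and the correction $\int\tilde{u}_{n}^{*}(\omega-\omega_{\pm\infty})$ over the rescaled discs goes to $0$ exactly as in the proof of Proposition \ref{pro:gradient bound for finite energy curve}; so the bubble would carry zero $\omega_{*}$-energy, a contradiction. (The Hofer energy is translation invariant, so $E(\cdot)<E_{0}$ persists through all translations.) Together with Lemma \ref{lem:Gromov-Schwarz} this upgrades to uniform $C^{k}_{loc}$ bounds.

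With the gradient bound in hand, the aim is to show that $u_{n}$, restricted to the segment between $0$ and $s_{n}^{*}$, is trapped in a tube around a single closed orbit of $\mathbf{R}_{\infty}$ with nearly $s$-independent angular coordinate, contradicting the $\varepsilon$-displacement. On the sub-segments where $|a_{n}|$ stays in a fixed compact set, the Monotonicity Lemma for the $J$-taming non-degenerate form $\omega+\phi\sigma\wedge\lambda$ on a compact slab $[-Q,Q]\times V$, combined with $E(\tilde{u}_{n})<E_{0}$ and the gradient bound, forces $u_{n}$ to vary little in $s$ over long such sub-segments; hence the $\varepsilon$-displacement can only arise where $|a_{n}|$ is large. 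On a maximal portion $[\alpha_{n},\beta_{n}]$ on which $|a_{n}|$ is large and which contains $s_{n}^{*}$, the subsequence-convergence argument underlying Theorem \ref{thm:subsequence convergence to Reeb} (legitimate because of the gradient bound and $E_{\omega}(\tilde{u}_{n})\to0$) shows that $u_{n}(\alpha_{n},\cdot)$ and $u_{n}(\beta_{n},\cdot)$ are $C^{\infty}$-close to closed $\mathbf{R}_{\infty}$-orbits. Introducing the tubular coordinates $(\vartheta,z_{in},z_{out})$ of Lemma \ref{lem:choosing nice coordinate} around the orbit through $s_{n}^{*}$, and arguing as in the proof of Theorem \ref{thm:converge to reeb orbit}, the hypotheses of Lemmas \ref{lem:g''>cg-ce^{-cs}}, \ref{lem:g(s)<max{g(s0),g(s*)}}, \ref{lem: <z(s),e>-<z(s0),e>}, \ref{lem:higher derivative estimate} and \ref{lem:exponential estimate for a and theta} hold on $[\alpha_{n},\beta_{n}]$ once $\sigma_{n}$ is small and $n$ is large (largeness of $a$, and smallness of the $C^{l}$-norms of $z_{out}$ and of $\partial^{\beta}(a-Ts)$, $\partial^{\beta}(\vartheta-Tt)$, $\partial^{\beta}z_{in}$ for $0<|\beta|\leqq l$). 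Those lemmas bound $\Vert Q_{0}z(s)\Vert_{0}$, the drift $\langle z(s),e\rangle_{0}-\langle z(\alpha_{n}),e\rangle_{0}$, and $\tilde{a},\tilde{\vartheta}$ by quantities independent of the length $\beta_{n}-\alpha_{n}$ and controlled by the (small) boundary data at $\alpha_{n}$ and $\beta_{n}$. Hence $z(s,t)$, $a(s,t)-Ts-a_{0}$ and $\vartheta(s,t)-Tt-\vartheta_{0}$ are all $<\varepsilon/3$ on $[\alpha_{n},\beta_{n}]\times S^{1}$, so $u_{n}(s,t)$ there stays within $\varepsilon/3$ of the fixed loop $t\mapsto\gamma_{n}(Tt+\vartheta_{0})$; chaining this with the small $s$-variation of $u_{n}$ on the intervening bounded-$|a_{n}|$ pieces shows that $u_{n}(s_{n}^{*},t_{n}^{*})$ and $u_{n}(0,t_{n}^{*})$ are within $\varepsilon$, the desired contradiction.

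The step I expect to be the main obstacle is the bookkeeping in the last paragraph: verifying, uniformly in $n$, that the $C^{l}$-smallness and largeness-of-$a$ hypotheses of Lemmas \ref{lem:g''>cg-ce^{-cs}} and \ref{lem:higher derivative estimate} genuinely hold throughout the large-$|a_{n}|$ portion (obtained by the limiting/Ascoli--Arzel\`a arguments used in the proof of Theorem \ref{thm:converge to reeb orbit}), that the boundary data $\Vert Q_{0}\partial^{\beta}z\Vert_{0}$ and $\tilde{a},\tilde{\vartheta}$ at $\alpha_{n},\beta_{n}$ are small, and that the reference orbit does not drift appreciably inside its Morse--Bott manifold $N_{T}$ as $s$ ranges over $[\alpha_{n},\beta_{n}]$. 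The length-independent drift estimate of Lemma \ref{lem: <z(s),e>-<z(s0),e>} is exactly what controls the last point, but assembling it with the Monotonicity input on the bounded-$|a_{n}|$ pieces and with the continuity of the nearby orbit along the whole segment is the delicate part.
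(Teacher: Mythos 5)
Your overall architecture --- argue by contradiction, get uniform gradient bounds by the bubbling argument, pass to limits to identify trivial cylinders, then run Lemmas \ref{lem:g''>cg-ce^{-cs}}, \ref{lem:g(s)<max{g(s0),g(s*)}}, \ref{lem: <z(s),e>-<z(s0),e>} and \ref{lem:higher derivative estimate} to control the drift over the long middle portion --- is the same as the paper's. The paper recenters once at the displacement point, extracts a subsequence with $a_n(s_n,t_n)\to+\infty$, verifies the hypotheses of those lemmas on all of $[-k_n,k_n]\times S^1$ by the same Ascoli--Arzel\`a contradiction argument you describe, and concludes $\sup_{s}\Vert Q_{0n}z_n(s)\Vert_{0,n}\to0$ and hence $\kappa_{0n}\to0$. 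The part of your proposal devoted to the large-$|a_n|$ portion is therefore sound and matches the paper's use of the length-independent estimates.

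The genuine gap is in your treatment of the sub-segments where $|a_n|$ stays in a fixed compact set. You claim that ``the Monotonicity Lemma for the $J$-taming form $\omega+\phi\sigma\wedge\lambda$, combined with $E(\tilde u_n)<E_0$ and the gradient bound, forces $u_n$ to vary little in $s$.'' That inference is invalid: monotonicity plus a mere upper bound $E_0$ on the area of the taming form only bounds the number of disjoint $r$-balls the image can cross, hence bounds the diameter of the image by a quantity of order $E_0/(C_0 r)$ with $r<r_0$ fixed --- it does not make the variation small, and a displacement of size $\varepsilon$ is perfectly compatible with it. The only quantity that tends to zero is $E_\omega(\tilde u_n)$, and $\omega$ is degenerate (it annihilates $\partial/\partial r$ and $\mathbf{R}$), so it cannot feed the monotonicity lemma by itself. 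To close this step one must show that the area of the full non-degenerate form over such a sub-cylinder tends to zero; that is exactly the content of the Claim ($T\neq0$) in the proof of Theorem \ref{thm:subsequence convergence to Reeb} (formulas (\ref{eq:cusp area})--(\ref{eq:cusp area 2})), where Stokes' theorem for $d(\psi(r)\lambda)$, the vanishing of the asymptotic $\lambda$-periods of the boundary loops, and the absorption of the $c_m\sigma\wedge\lambda$ term into the left-hand side are all essential. Without importing that argument, the case in which the cylinder (or the portion separating $s=0$ from $s_n^{*}$) stays in a compact part of $W$ is not handled, and there your proof reduces entirely to the invalid step. By contrast, the chaining and bookkeeping you single out as the main obstacle is comparatively benign: once the curve is $C^1$-close to a trivial cylinder with $T\neq0$, $a_s$ has a fixed sign, so only boundedly many alternations between the two regimes can occur.
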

\begin{proof}
The proof follows the scheme in \cite{compactness} with some modification.

By contradiction, assume that there exist sequences $c_{n}\to+\infty,$
$R_{n}>c_{n}$ and $\tilde{u}_{n}=(a_{n},u_{n}):[-R_{n},R_{n}]\times S^{1}\to W$.
$\tilde{u}_{n}$ is $J-$holomorphic satisfying $E(\tilde{u}_{n})\leqq E_{0}$,
$E_{\omega}(\tilde{u}_{n})\to0$, and $u_{n}(s_{n},t_{n})\notin B(u_{n}(0,t_{n}),\epsilon)$
for some $s_{n}\in[-k_{n},k_{n}]$, $k_{n}=R_{n}-c_{n}$ and $t_{n}\in S^{1}.$
By the proof of Proposition \ref{pro:gradient bound for finite energy curve}
together with the Bubbling Lemma, $\Vert\nabla\tilde{u}_{n}\Vert$
is uniformly bounded on each compact subset. We can extract a subsequence
of $n$, still denoted by $n$, such that $a_{n}(s_{n},t_{n})\to\pm\infty.$
This is because otherwise, we can get a contradiction as in the proof
of Proposition \ref{pro:gradient bound for finite energy curve}.
Now define $\tilde{u}_{n}^{0}(s,t):=(a_{n}^{0},u_{n}^{0})=(a_{n}(s,t)-a_{n}(s_{n},t_{n}),u_{n}(s,t)).$
Hence, by Ascoli-Arzela, we can extract a subsequence still called
$\tilde{u}_{n}^{0}$ converging to a $J_{\infty}$-holomorphic cylinder
$\tilde{u}:\mathbb{R}\times S^{1}\to\mathbb{R}\times V.$ Since $\tilde{u}$
satisfies $E_{\omega}(\tilde{u})=0$ and $E(\tilde{u})\leqq E_{0}$,
$\tilde{u}$ is a trivial cylinder over some periodic orbit $\gamma.$
Let's choose a neighborhood around $\gamma$ and pick the coordinate
as in Lemma \ref{lem:choosing nice coordinate}, and show that
\begin{equation}
\underset{(s,t)\in[-k_{n},k_{n}]\times S^{1}}{\sup}|\partial^{\beta}z_{out,n}(s,t)|\to0\label{eq:zout,n}
\end{equation}
 for multi-indices $\beta$ and
\begin{equation}
\underset{(s,t)\in[-k_{n},k_{n}]\times S^{1}}{\sup}|\partial^{\beta}(a_{n}(s,t)-Ts)|\to0\label{eq:an(s,t)}
\end{equation}
\begin{equation}
\underset{(s,t)\in[-k_{n},k_{n}]\times S^{1}}{\sup}|\partial^{\beta}z_{in,n}(s,t)|\to0\label{eq:zin,n(s,t)}
\end{equation}
\begin{equation}
\underset{(s,t)\in[-k_{n},k_{n}]\times S^{1}}{\sup}|\partial^{\beta}(\vartheta_{n}(s,t)-Tt)|\to0\label{eq:vartheta n (s,t)}
\end{equation}
 for multi-indices $\beta$ with $|\beta|>0$, when $n\to+\infty.$ 

If this were not true, suppose there exists a subsequence of $\{n\}$
still denoted by $\{n\}$ such that $(s'_{n},t'_{n})$ violates one
of these properties. Then we can do the same argument using $(s'_{n},t'_{n})$
instead of $(s_{n},t_{n})$ as above, and get a trivial cylinder contradicting
the fact that $(s'_{n},t'_{n})$ violates one of these properties. 

Define $A_{0n}$ and $Q_{0n}$ in the obvious way using $\gamma$
and $s_{0n}=0$. Then we can apply Lemma \ref{lem:g''>cg-ce^=00007B-cs=00007D},
Lemma \ref{lem:g(s)<max=00007Bg(s0),g(s*)=00007D}, Lemma \ref{lem: <z(s),e>-<z(s0),e>},
and Lemma \ref{lem:higher derivative estimate} to each $\tilde{u}_{n}|_{[-k_{n},k_{n}]}$,
and get $\underset{s\in[-k_{n},k_{n}]}{\sup}\Vert Q_{0n}z_{n}(s)\Vert_{0,n}\to0.$
Then the Sobolev embedding theorem tells us $\kappa_{0n}\to0$ as
$n\to+\infty.$ This contradicts the assumption that $u_{n}(s_{n},t_{n})\notin B(u_{n}(0,t),\epsilon)$.
\end{proof}
We need the following theorem later to prove the surjectivity of the
gluing map in the subsequent paper. After we proved all the previous
lemmata and theorems, the proof of the following theorem is standard.
For the case when $J$ is cylindrical and non-degenerate, and $V$
is a contact manifold, the proof is given in \cite{Finite energy cylinders of small area}.
\begin{thm}
\label{thm:exponential estimate of cylinder with small area}\textsl{Suppose
that $J$ is an asymptotically cylindrical almost complex structure
on $W=\mathbb{R}^{+}\times V$ at $\infty$. Suppose that }$J$ is
of the Morse-Bott type\textsl{. Given $E_{0}>0$ and sufficiently
small $\varepsilon>0$, there exist constants $\sigma,c,\flat,\nu>0$
such that for every $R>c$ and every $J$-holomorphic cylinder $\tilde{u}=(a,u):[-R,R]\times S^{1}\to(\flat,\infty)\times V$
satisfying the inequalities $E_{\omega}(\tilde{u})<\sigma$ and $E(\tilde{u})<E_{0},$
there exists either a point $w\in W$ such that $\tilde{u}(s,t)\in B_{\varepsilon}(w)$
for $s\in[-R+c,R-c]$ and $t\in S^{1},$ or a $T$-periodic orbit
$\gamma$ of $\mathbf{R}_{\infty}$ such that $u(s,t)\in B_{\varepsilon}(\gamma(Tt))$
for $s\in[-R+c,R-c]$ and $t\in S^{1}.$ In the second case, we have
a coordinate around $\gamma$ as in Lemma \ref{lem:choosing nice coordinate}
such that }

\textsl{
\begin{align*}
|D^{\beta}\{a(s,t)-Ts-a_{0}\}|^{2} & \leqq\varepsilon^{2}M_{\beta}\frac{\cosh(2\nu s)}{\cosh(2\nu(R-c))}+C_{\beta}e^{-c_{\beta}(s+R-c)},\\
|D^{\beta}\{\vartheta(s,t)-Tt-\vartheta_{0}\}|^{2} & \leqq\varepsilon^{2}M_{\beta}\frac{\cosh(2\nu s)}{\cosh(2\nu(R-c))}+C_{\beta}e^{-c_{\beta}(s+R-c)},\\
|D^{\beta}z(s,t)|^{2} & \leqq\varepsilon^{2}M_{\beta}\frac{\cosh(2\nu s)}{\cosh(2\nu(R-c))}+C_{\beta}e^{-c_{\beta}(s+R-c)},
\end{align*}
 for $s\in[-R+c,R-c],$ $t\in S^{1},$ and $\beta\in\mathbb{N}\times\mathbb{N}$
such that $|\beta|\leqq l-3$, where $M_{\beta},C_{\beta},c_{\beta}$
are constants independent of $\tilde{u}$ and $\varepsilon,$ $C_{\beta}$
converges to $0$ as $\flat$ converges to $+\infty,$ and $M_{\beta}$
and }\textup{$c_{\beta}$}\textsl{ are independent of $\flat.$}

Similar statement is also true for $\mathbb{R}^{-}\times V.$
\end{thm}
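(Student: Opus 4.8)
The plan is to deduce this refinement from Theorem \ref{thm: holomorphic cylinder with small area lies} together with the local differential--inequality machinery already established in Lemmata \ref{lem:g''>cg-ce^{-cs}}--\ref{lem:exponential estimate for a and theta}. First I would fix $E_0$ and a small $\varepsilon$, and, after shrinking $\sigma$ and enlarging $c$, invoke Theorem \ref{thm: holomorphic cylinder with small area lies} to trap the $V$-projection of $\tilde u|_{[-R+c,R-c]\times S^1}$ inside a single small ball of $V$. The dichotomy --- either $\tilde u$ itself lies in some $B_\varepsilon(w)\subset W$, or $u$ is $\varepsilon$-close to a $T$-periodic orbit $\gamma$ of $\mathbf{R}_\infty$ with $T\ne0$ --- is then obtained by the bubbling--and--rescaling argument used in \cite{compactness} and in the proof of Theorem \ref{thm: holomorphic cylinder with small area lies}: arguing by contradiction with sequences $c_n\to+\infty$, $R_n>c_n$, $\tilde u_n$ with $E(\tilde u_n)\le E_0$ and $E_\omega(\tilde u_n)\to0$, one uses the proof of Proposition \ref{pro:gradient bound for finite energy curve} and the Bubbling Lemma \ref{lem:Bubbling-Lemma} to get gradient bounds uniform on compact sets (there is in fact no bubble, since $E_\omega\to0$), extracts a $C^\infty_{loc}$ limit, and notes that a cylinder of zero $\omega_\infty$-energy and finite total energy is a trivial cylinder over a point (first alternative) or over a nonconstant periodic orbit of $\mathbf{R}_\infty$ (the limit orbit $\gamma$ in the second alternative). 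In the first alternative we are done, so from now on I work in the second.

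Next I would set $s_0:=-R+c$, $s_1:=R-c$, choose the Morse--Bott coordinates $(\vartheta,x,y)$ around $\gamma$ from Lemma \ref{lem:choosing nice coordinate}, and write $\tilde u=(a,\vartheta,z_{in},z_{out})$; since the image lies in $(\flat,\infty)\times V$ we have $a(s_0,0)\ge\flat$, which is where the hypothesis on the target enters. The key preliminary step is to verify that the remaining hypotheses of Lemma \ref{lem:g''>cg-ce^{-cs}} and Lemma \ref{lem:higher derivative estimate} hold uniformly on $[s_0,s_1]\times S^1$, namely that $|\partial^\beta z_{out}|\le\delta$ for $|\beta|\le l$ and $|\partial^\beta z_{in}|$, $|\partial^\beta(a-Ts)|$, $|\partial^\beta(\vartheta-Tt)|$ are $\le\delta$ for $0<|\beta|\le l$. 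This I would prove by contradiction and rescaling exactly as in the proof of Theorem \ref{thm:converge to reeb orbit}: a sequence of points violating one of these bounds would, after translating and applying Ascoli--Arzela, converge to a nonconstant $J_\infty$-holomorphic cylinder of zero $\omega_\infty$-energy over a periodic orbit, i.e. a trivial cylinder, which satisfies all those bounds --- a contradiction. With these in hand one runs the bootstrap showing that $\mathfrak s:=\sup\{s\in[s_0,s_1]:|\kappa_0(s')|\le\bar\kappa\text{ for all }s'\le s\}$ equals $s_1$: if $\mathfrak s<s_1$, then on $[s_0,\mathfrak s]$ Lemma \ref{lem:higher derivative estimate} bounds $\sup_t|z_{in}(\mathfrak s,t)|$ and Lemma \ref{lem:exponential estimate for a and theta} bounds $|\vartheta(\mathfrak s,0)-\vartheta(s_0,0)|$, both by quantities that can be made smaller than $\bar\kappa/2$, so $|\kappa_0(\mathfrak s)|<\bar\kappa$, contradicting the maximality of $\mathfrak s$.

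Once $\mathfrak s=s_1$, so that the midpoint $\tfrac{s_0+\mathfrak s}{2}=0$ and $\mathfrak s-s_0=2(R-c)$, the estimates are simply read off. Lemma \ref{lem:g(s)<max{g(s0),g(s*)}} gives $g_0(s)\le\max\{g_0(s_0),g_0(\mathfrak s)\}\,\tfrac{\cosh(cs)}{\cosh(c(R-c))}+\tfrac{c_2}{c_{1}^{2}-c^{2}}\,\tfrac{\sinh(c(\mathfrak s-s))}{\sinh(2c(R-c))}$, and the analogous statements of Lemma \ref{lem:higher derivative estimate} and Lemma \ref{lem:exponential estimate for a and theta} (the latter in the spirit of Lemmata 3.8--3.13 of \cite{Finite energy cylinders of small area}) handle $\partial^\beta z$ for $1\le|\beta|\le l-2$ and $\partial^\beta\tilde a,\partial^\beta\tilde\vartheta$ for $|\beta|\le l-3$, with $a_0,\vartheta_0$ as defined there. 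The boundary data $g_0(s_0),g_0(\mathfrak s)$ and $\|\tilde a(s_0,\cdot)\|,\|\tilde a(\mathfrak s,\cdot)\|$, etc., are $O(\varepsilon^2)$ because $u(\pm(R-c),\cdot)$ lies in $B_\varepsilon(\gamma(T\cdot))$; this yields the factor $\varepsilon^2M_\beta$ in front of the $\cosh$ term (one may take $2\nu=c$). The error constant is $c_2=c_0e^{-c_0'(\flat-\delta)}$, which produces exactly the term $C_\beta e^{-c_\beta(s+R-c)}$ (with $c_\beta=c_1$ and $s+R-c=s-s_0$) and the property that $C_\beta\to0$ as $\flat\to+\infty$, while $M_\beta$ and $c_\beta$ depend only on $(W,J,\omega)$ and the fixed Morse--Bott data, not on $\flat$. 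Finally I would convert these $\langle\cdot,\cdot\rangle_0$- and $W^{1,2}(S^1)$-norm bounds into the pointwise $|D^\beta\cdot|^2$-bounds of the statement by the Sobolev embedding $W^{1,2}(S^1)\hookrightarrow C^0(S^1)$, trading $s$-derivatives for $t$-derivatives via the Cauchy--Riemann equations (\ref{eq:zs+Mzt+Szout}), (\ref{eq:a' with respect to s}), (\ref{eq:a' with respect to t}) --- which is what forces the drop from $l-2$ to $l-3$ derivatives --- and squaring.

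The main obstacle will be the first half rather than the estimates: establishing the dichotomy and, above all, trapping the curve in the coordinate chart with $C^l$-small data uniformly along the whole long cylinder, together with the bootstrap $\mathfrak s=R-c$. The differential--inequality lemmata are already proven, so once the curve is so trapped everything else is bookkeeping; but the trapping requires the same repeated rescaling/compactness argument as in the proof of Theorem \ref{thm:converge to reeb orbit}, now carried out with both endpoints of the interval finite and with the $\omega$-energy smallness playing the role that unboundedness of the image plays there. The one genuinely new point relative to \cite{compactness,Finite energy cylinders of small area} is to keep track of the $c_2e^{-c_1(s-s_0)}$ error terms coming from conditions (AC3)--(AC4) in Definition \ref{def: asympt cylindrical} and to check that they contribute only the stated $C_\beta e^{-c_\beta(s+R-c)}$ with $C_\beta\to0$ as $\flat\to+\infty$.
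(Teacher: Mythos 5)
Your proposal is correct and follows essentially the route the paper intends: the paper omits the argument, declaring it standard once Theorem \ref{thm: holomorphic cylinder with small area lies} and Lemmata \ref{lem:g''>cg-ce^{-cs}}--\ref{lem:exponential estimate for a and theta} are available (citing \cite{Finite energy cylinders of small area} for the cylindrical non-degenerate case), and your outline --- the rescaling dichotomy, trapping the curve in the Morse--Bott chart with $C^{l}$-small data, the bootstrap $\mathfrak{s}=R-c$, and then reading off Lemma \ref{lem:g(s)<max{g(s0),g(s*)}} and Lemma \ref{lem:higher derivative estimate} with $O(\varepsilon)$ boundary data and the $c_{2}e^{-c_{1}(s-s_{0})}$ error producing $C_{\beta}\to0$ as $\flat\to+\infty$ --- is exactly that. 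The one piece of bookkeeping you leave implicit is that the $O(\varepsilon)$ bounds on the boundary \emph{derivative} data $\Vert Q_{0}\partial^{\beta'}z(\pm(R-c))\Vert_{0}$ require interior elliptic estimates on a slightly larger collar (i.e.\ first running the trapping argument with a slightly smaller $c$), which is standard.
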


\section{\label{sec:Almost-complex-manifolds with ends}Almost complex manifolds
with asymptotically cylindrical ends}

In this section, we introduce the notion of almost complex manifolds
with asymptotically cylindrical ends.

\subsection{\label{sub:Definition 4.1}Definition}

Let $(W_{0},\omega')$ be a closed symplectic manifold with boundary
$\partial W_{0}=V_{+}\bigsqcup V_{-},$ where $V_{\pm}$ is an oriented
closed manifold. Let $W$ be the noncompact smooth manifold obtained
by attaching $E_{\pm}:=\mathbb{R}^{\pm}\times V_{\pm}$ to $W_{0}$
along $\{0\}\times V_{\pm}$ and $V_{\pm}.$ Suppose that there exists
an almost complex structure $J$ on $W$ such that $J|_{W_{0}}$ is
compatible with $\omega'$ and $(E_{\pm},J|_{E_{\pm}})$ is asymptotically
cylindrical at $\pm\infty.$ We assume that the orientation of $E_{\pm}$
determined by $J|_{E_{\pm}}$ coincides with the orientation coming
from the standard orientation of $\mathbb{R}^{\pm}$ and the orientation
of $V_{\pm}.$ This assumption distinguishes $V_{+}$ from $V_{-}.$
Further more, we assume $\omega'|_{V_{\pm}}=\omega_{\pm\infty},$
where $\omega_{\pm\infty}$ is the $2$-form on $V_{\pm}$ from Definition
\ref{def: asympt cylindrical}. In this case, we say $(W,J)$ is an
almost complex manifold with asymptotically cylindrical ends.

\begin{example}
\label{exm:sub kahler}\cite{compactness} Let $(X,\omega',J)$ be
an almost Kähler manifold, and $Y\subset X$ is an embedded closed
almost Kähler submanifold. We claim that $(X\backslash Y,J|_{X\backslash Y})$
has asymptotically cylindrical negative end. Let $N$ be the normal
bundle of $Y$ in $X$ with the metric $\omega'(\cdot,J\cdot)|_{Y}$,
$V$ be the associated unit sphere bundle of $N$ defined by $V=\left\{ u\in N:\left|u\right|=1\right\} $,
and $U_{\epsilon}$ be the disc bundle of $N$ defined by $U_{\epsilon}=\left\{ u\in N:\left|u\right|\leqq\epsilon\right\} $.
For small enough $\epsilon>0$, $U_{\epsilon}$ is diffeomorphic to
a tubular neighborhood of $Y$ in $X$ via the exponential map with
respect to the metric $\omega'(\cdot,J\cdot)$.  Since $U_{\epsilon}\backslash Y$
is also diffeomorphic to $(-\infty,\log\epsilon]\times V$ via the
map $u\mapsto(\log|u|,u/|u|)$, one can check that this makes $(X\backslash Y,J|_{X\backslash Y})$
an almost complex manifold with asymptotically cylindrical negative
end. 

In particular, if we pick $Y$ to be a point in $X$, we get Example
\ref{exm: R(2n+2)} as a special case.  
\end{example}

\subsection{Energy of $J$-holomorphic curves}

Let $w$ be a $J$-holomorphic map from a punctured Riemann surface
$(\Sigma,j)$ to $(W,J)$, and define 

\[
E_{\mbox{symp}}(w)=\int_{w^{-1}(W_{0})}w^{*}\omega',
\]

\[
E_{\omega}(w)=\int_{w^{-1}(E_{+})}w^{*}\omega+\int_{w^{-1}(E_{-})}w^{*}\omega,
\]

\[
E_{\lambda}(w)=\underset{\phi\in\mathcal{C}_{+}}{\sup}\int_{w^{-1}(E_{+})}w^{*}(\phi\sigma\wedge\lambda)+\underset{\phi\in\mathcal{C}_{-}}{\sup}\int_{w^{-1}(E_{-})}w^{*}(\phi\sigma\wedge\lambda),
\]
 where 
\[
\mathcal{C}_{+}=\{\phi\in C_{c}^{\infty}(\mathbb{R}^{+},[0,1])|\intop\phi=1\}
\]
 
\[
\mathcal{C}_{-}=\{\phi\in C_{c}^{\infty}(\mathbb{R}^{-},[0,1])|\int\phi=1\},
\]
 and 
\[
E(w)=E_{\mbox{symp}}(w)+E_{\omega}(w)+E_{\lambda}(w).
\]

\begin{thm}
\label{thm: asymptotically cylindrical ends}Suppose $(W,J)$ is an
almost complex manifold with asymptotically cylindrical ends. Suppose
that $J$ is of the Morse-Bott type. Let $w$ be a $J$-holomorphic
curve from a puncture Riemann surface $\Sigma$ to $W$ with $E(w)<\infty.$
Then around each puncture, either $w$ can be extended holomorphically
over the puncture, or one can choose a holomorphic coordinate chart
$\mathbb{R}^{+}\times S^{1}$ or $\mathbb{R}^{-}\times S^{1}$ in
$S$ around the puncture, such that $w$ converges to a Reeb orbit
in $E_{+}$ or $E_{-}$ in the sense of Theorem \ref{thm:converge to reeb orbit}
and \ref{thm: exponential convergence}.\end{thm}
\begin{proof}
If $w$ is bounded around a puncture, then Gromov's Removable of Singularity
Theorem implies that $w$ can be extended holomorphically over the
puncture. 

Suppose that $w$ is not bounded around a puncture. We pick a holomorphic
cylindrical coordinate $\mathbb{R}^{+}\times S^{1}$ around the puncture
of $\Sigma.$ By Proposition \ref{pro:gradient bound for v holomorphic cylinder},
$|\nabla w|<C$ with respect to the standard metric on $\mathbb{R}^{+}\times S^{1}.$
If $w$ keeps coming back to a compact region of $W$ and also escaping
to the positive (or negative) end of $W,$ we can find $r_{0}$ such
that $w$ touches $\{r_{0}\}\times V_{\pm}$ and $\{r_{0}\pm3C\}\times V_{\pm}$
infinitely many times. Then we can apply Gromov's Monotonicity Theorem
to $w$ in the region $[r_{0}\pm C,r_{0}\pm2C]\times V_{\pm}$ as
in the argument of Case 1 in the proof of Theorem \ref{thm:subsequence convergence to Reeb},
and get $E(w)=\infty,$ which contradicts to the assumption. Therefore,
near the puncture $w$ converges to $\infty$ or $-\infty$ in $E_{+}$
or $E_{-}.$ Then Theorem \ref{thm: asymptotically cylindrical ends}
follows from Theorem \ref{thm:converge to reeb orbit} and \ref{thm: exponential convergence}.\end{proof}
\begin{prop}
\label{prop: topological energy bound}Suppose $(W,J)$ is an almost
complex manifold with asymptotically cylindrical ends. Suppose that
$J$ is of the Morse-Bott type. Then there exists a constant $\epsilon_{0}>0$
such that if $K_{0}^{\pm}<\epsilon_{0},$ where $K_{0}^{\pm}$ is
the constant in (AC1), the following holds.

Let $w$ be a $J$-holomorphic curve from a puncture Riemann surface
$\Sigma$ to $W$ such that around punctures of $\Sigma$, $w$ converges
to the periodic orbits $\gamma_{1}^{+},...,\gamma_{p}^{+}$ inside
$V_{+}$ and $\gamma_{1}^{-},...,\gamma_{q}^{-}$ inside $V_{-}.$
Then we have
\begin{align*}
E(w)\leq & C_{1}\sum_{i=1}^{p}\int_{\gamma_{i}^{+}}\lambda_{\infty}-C_{2}\sum_{j=1}^{q}\int_{\gamma_{j}^{-}}\lambda_{-\infty}\\
 & +C_{3}\left(\int_{w^{-1}(E_{+})}w^{*}\omega_{\infty}+\int_{w^{-1}(W_{0})}w^{*}\omega'+\int_{w^{-1}(E_{-})}w^{*}\omega_{-\infty}\right),
\end{align*}
 where $C_{1},C_{2},C_{3}$ are some positive constants that are independent
of $w.$ In particular, $E(w)$ only depends on the homology class
of $w$ in $H_{2}(W,(\cup_{i=1}^{p}\gamma_{i}^{+})\cup(\cup_{j=1}^{q}\gamma_{j}^{-})).$
\end{prop}
Proposition \ref{prop: topological energy bound} is the asymptotically
cylindrical version of Proposition 6.13 in \cite{compactness}. The
extra work to prove it for the asymptotically cylindrical case is
essentially carried out in the Appendix of \cite{Bao} where we assume
$\omega_{\pm\infty}=d\lambda_{\infty}$. For the sake of completeness,
we reproduce the proof here.
\begin{proof}
First, we restrict ourself to $E_{+},$ and denote $w_{\pm}:=w|_{w^{-1}(E_{\pm})}.$
Note that when restricted to $J$-complex planes, we have 
\begin{equation}
|\omega-\omega_{\infty}|\leq\epsilon e^{-\delta s}(\omega+\sigma\wedge\lambda),\label{eq:omega-omegainfty}
\end{equation}
\begin{equation}
|d\lambda_{\infty}|\leq C\omega+\epsilon e^{-\delta s}\sigma\wedge\lambda,\label{eq:fig}
\end{equation}
 and
\begin{equation}
|\sigma\wedge\lambda-dr\wedge\lambda_{\infty}|\leq\epsilon e^{-\delta s}(\sigma\wedge\lambda+\omega),\label{eq:palm}
\end{equation}
where $C$ is a positive constant and the constant $\epsilon>0$ can
be chosen to be small if $K_{0}^{+}$ is small. Since $\int_{0}^{\infty}\delta e^{-\delta s}ds=1,$
we get 
\[
\int_{w^{-1}(E_{+})}w^{*}\omega\leq\int_{w^{-1}(E_{+})}w^{*}\omega_{\infty}+\epsilon\int_{w^{-1}(E_{+})}w^{*}\omega+\frac{\epsilon}{\delta}E_{\lambda}(w_{+}),
\]
 where $E_{\lambda}(w_{\pm}):=\underset{\phi\in\mathcal{C}_{\pm}}{\sup}\int_{w^{-1}(E_{\pm})}w^{*}(\phi\sigma\wedge\lambda).$
Absorbing the second term on the RHS to the LHS, we get 
\begin{equation}
E_{\omega}(w_{+})\leq C_{1}\int_{w^{-1}(E_{+})}w^{*}\omega_{\infty}+C_{2}\epsilon E_{\lambda}(w_{+}),\label{eq:apple}
\end{equation}
for some constants $C_{1},C_{2},$ where $E_{\omega}(w_{\pm}):=\int_{w^{-1}(E_{\pm})}w^{*}\omega.$

For any $\phi\in\mathcal{C}_{+},$ let $\Phi(s)=\int_{0}^{s}\phi(l)dl,$
and then using \eqref{eq:fig} and \eqref{eq:palm} we have 
\begin{eqnarray*}
 &  & \int_{w^{-1}(E_{+})}w^{*}\phi\sigma\wedge\lambda\\
 & = & \int_{w^{-1}(E_{+})}w^{*}\phi dr\wedge\lambda_{\infty}+\int_{w^{-1}(E_{+})}w^{*}\phi(\sigma\wedge\lambda-dr\wedge d\lambda_{\infty})\\
 & \leq & \int_{w^{-1}(E_{+})}w^{*}d(\Phi\lambda_{\infty})-\int_{w^{-1}(E_{+})}w^{*}\Phi d\lambda_{\infty}\\
 &  & +\int_{w^{-1}(E_{+})}w^{*}\epsilon e^{-\delta s}\phi(\sigma\wedge\lambda+\omega)\\
 & \leq & \sum_{i=1}^{p}\int_{\gamma_{i}^{+}}\lambda_{\infty}+\int_{w^{-1}(E_{+})}w^{*}(C\omega+\epsilon e^{-\delta s}\sigma\wedge\lambda)\\
 &  & +\int_{w^{-1}(E_{+})}w^{*}\epsilon e^{-\delta s}\phi(\sigma\wedge\lambda+\omega)\\
 & \leq & \sum_{i=1}^{p}\int_{\gamma_{i}^{+}}\lambda_{\infty}+CE_{\omega}(w_{+})+\epsilon E_{\lambda}(w_{+}),
\end{eqnarray*}
where in the last inequality we get the constants $C$ and $\epsilon$
by slightly abusing the notations, but we can still have $\epsilon$
small. Taking sup over $\phi,$ we get 
\begin{equation}
E_{\lambda}(w_{+})\leq\sum_{i=1}^{p}\int_{\gamma_{i}^{+}}\lambda_{\infty}+CE_{\omega}(w_{+})+\epsilon E_{\lambda}(w_{+}).\label{eq:almost done}
\end{equation}
 Therefore, by (\ref{eq:apple}) and (\ref{eq:almost done}) we have
\begin{equation}
E_{\omega}(w_{+})+E_{\lambda}(w_{+})\leq C_{1}\int_{\gamma_{+}}\lambda_{\infty}+C_{2}\int_{w^{-1}(E_{+})}w^{*}\omega_{\infty},\label{eq:+}
\end{equation}
 where constants $C_{1}$ and $C_{2}$ are not necessarily the same
as before.

For $E_{-},$ by the proof of Theorem 10 in \cite{Bao} if $K_{0}^{-}$
is small, we have 
\begin{equation}
E_{\omega}(w_{-})+E_{\lambda}(w_{-})\leq C_{1}^{'}E_{symp}(w)+C_{2}^{'}\int_{w^{-1}(E_{-})}w^{*}\omega_{\infty}-C_{3}^{'}\sum_{j=1}^{q}\int_{\gamma_{j}^{-}}\lambda_{-\infty},\label{eq:-}
\end{equation}
where $C_{1}^{'},C_{2}^{'},C_{3}^{'}$ are positive constants independent
of $w.$ Here we recall 
\begin{equation}
E_{symp}(w)=\int_{w^{-1}(W_{0})}w^{*}\omega'.\label{eq:0}
\end{equation}

Now by (\ref{eq:+}) and (\ref{eq:-}) we have 
\begin{eqnarray*}
 &  & E(w)\\
 & = & E_{\omega}(w_{+})+E_{\lambda}(w_{+})+E_{\omega}(w_{-})+E_{\lambda}(w_{+})+E_{symp}(w)\\
 & \leq & a_{1}(E_{\omega}(w_{+})+E_{\lambda}(w_{+}))+a_{2}(E_{\omega}(w_{-})+E_{\lambda}(w_{+}))+a_{3}E_{symp}(w_{0})\\
 & \leq & C_{1}\int_{\gamma_{+}}\lambda_{\infty}-C_{2}\int_{\gamma_{-}}\lambda_{-\infty}\\
 &  & +C_{3}\left(\int_{w^{-1}(E_{+})}w^{*}\omega_{\infty}+\int_{w^{-1}(W_{0})}w^{*}\omega'+\int_{w^{-1}(E_{-})}w^{*}\omega_{\infty}\right),
\end{eqnarray*}
where $a_{1},a_{2},a_{3}\geq1$ are some positive constants chosen
in a way such that the last inequality holds, for some positive constants
$C_{1},C_{2}$ and $C_{3}$. 
\end{proof}
Let $\mathcal{M}_{g,p+q}^{A}(\gamma_{1}^{+},...,\gamma_{p}^{+},\gamma_{1}^{-},...,\gamma_{q}^{-};J)$
be the moduli space of $J$-holomorphic curves of genus $g$ in $W$
that converge to periodic orbits $\gamma_{1}^{+},...,\gamma_{p}^{+}$
inside $V_{+}$ and $\gamma_{1}^{-},...,\gamma_{q}^{-}$ inside $V_{-}$
and represent the homology class $A\in H_{2}(W,(\cup_{i=1}^{p}\gamma_{i}^{+})\cup(\cup_{j=1}^{q}\gamma_{j}^{-})).$
Denote by $\overline{\mathcal{M}}_{g,p+q}^{A}(\gamma_{1}^{+},...,\gamma_{p}^{+},\gamma_{1}^{-},...,\gamma_{q}^{-};J)$
the compactification of $\mathcal{M}_{g,p+q}^{A}(\gamma_{1}^{+},...,\gamma_{p}^{+},\gamma_{1}^{-},...,\gamma_{q}^{-};J)$
by allowing stable holomorphic buildings. See 8.1 and 8.2 in \cite{compactness}
for the definition of stable holomorphic buildings in manifolds with
cylindrical ends and the topology of the moduli space of holomorphic
buildings; Finally, let us state the compactness results.
\begin{thm}
Suppose $(W,J)$ is an almost complex manifold with asymptotically
cylindrical ends. Suppose that $J$ is of the Morse-Bott type.

Then $\overline{\mathcal{M}}_{g,p+q}^{A}(\gamma_{1}^{+},...,\gamma_{p}^{+},\gamma_{1}^{-},...,\gamma_{q}^{-};J)$
is compact. \end{thm}
\begin{proof}
The extra difficulty of proof that comes from $J$ being asymptotically
cylindrical is taken care of by Theorem \ref{thm: asymptotically cylindrical ends},
the rest of the proof is a straightforward modification of \cite{compactness}.
For the sake of completeness, we outline the proof as follows.

Suppose that $(\Sigma_{n},w_{n})$ is a sequence of $J$-holomorphic
maps from punctured Riemann surface $\Sigma_{n},$ with $E(w_{n})<C.$ 

First, we add additional marked points to $\Sigma_{n}$ to stabilize
$\Sigma_{n},$ and use the unique hyperbolic metric on $\Sigma_{n}$
to decompose $\Sigma_{n}$ into $\epsilon$-thick part $\Sigma_{n}^{\epsilon-\mbox{thick}}$
and $\epsilon$-thin part $\Sigma_{n}^{\epsilon-\mbox{thin}}$ according
to the injectivity radius, for $\epsilon>0$. Take a subsequence of
$\Sigma_{n}$, still called $\Sigma_{n},$ such that $\Sigma_{n}$
converges to a nodal surface $\Sigma_{\infty}$ in the Deligne-Mumford
sense. By keep adding marked points to $\Sigma_{n},$ if necessary,
one can keep track of all the sphere bubbles of $w_{n}$ as $n\to\infty.$
Eventually, we achieved that for fixed $\epsilon>0,$ $w_{n}|_{\Sigma_{n}^{\epsilon-\mbox{thick}}}$
has uniformly gradient bound. By Ascoli-Arzela and elliptic estimates,
we can extract a converging subsequence of $w_{n},$ still called
$w_{n}.$ Now we let $\epsilon\to0,$ and pick a diagonal subsequence,
we get a converging subsequence of $w_{n},$ still called $w_{n},$
with the limit $(\Sigma_{\infty},w_{\infty}|_{\Sigma_{\infty}}).$
By Theorem \ref{thm: asymptotically cylindrical ends}, we know around
a puncture, the limit $w_{\infty}|_{\Sigma_{\infty}}$ either has
removable singularity, or converges to a Reeb orbit. But at the current
stage, $w_{\infty}$ may not be defined around the nodal points.

Secondly, for $\epsilon$ sufficiently small, the $\epsilon$-thin
part is disjoint union of finite cylinders or half finite cylinders.
If $E_{\omega}(w_{n}|_{\Sigma_{n}^{\epsilon-\mbox{thin}}})\to0$ as
$n\to\infty,$ then the behavior of the $w_{n}|_{\Sigma_{n}^{\epsilon-\mbox{thin}}}$
is controlled by Theorem \ref{thm: holomorphic cylinder with small area lies}.
In this case, the convergence of $w_{n}$ in the thick part can be
continuously extended over $\Sigma_{n}.$ Otherwise, $w_{n}|_{\Sigma_{n}^{\epsilon-\mbox{thin}}}$
can have the additional broken trajectory degeneration. By adding
more marked points to keep track of all the broken trajectory, one
has $E_{\omega}(w_{n}|_{\Sigma_{n}^{\epsilon-\mbox{thin}}})\to0$
as $n\to\infty.$\\

\end{proof}
\textbf{Acknowledgments. }Firstly, I would like to express my deepest
gratitude to my advisor Yong-Geun Oh for suggesting such an interesting
project. Secondly, I would like to thank Conan Leung for very patiently
answering my questions in Symplectic Geometry. Thirdly, I would like
to thank Lino Amorim, Garrett Alston, Dongning Wang, Rui Wang and
Ke Zhu for all the fruitful discussions and valuable suggestions.
Last but not least, I thank the anonymous referee for all the critics
and suggestions, which largely improved the quality of this paper.

\end{document}